\newcommand{\footremember}[2]{%
   \footnote{#2}
    \newcounter{#1}
    \setcounter{#1}{\value{footnote}}%
}
\def \ve{\varepsilon}
\def \IR{\mathds{R}}
\newcommand \la{\langle}
\newcommand \ra{\rangle}
\newtheorem{theorem}{Theorem}[section]
\newtheorem{lemma}[theorem]{Lemma}
\newtheorem{definition}[theorem]{Definition}
\newtheorem{assumption}[theorem]{Assumption}
\newtheorem{remark}[theorem]{Remark}
\def\Xinttt#1{\mathchoice
{\XXinttt\displaystyle\textstyle{#1}}%
{\XXinttt\textstyle\scriptstyle{#1}}%
{\XXinttt\scriptstyle\scriptscriptstyle{#1}}%
{\XXinttt\scriptscriptstyle\scriptscriptstyle{#1}}%
\!\int}
\def\XXinttt#1#2#3{{\setbox0=\hbox{$#1{#2#3}{\int}$}
\vcenter{\hbox{$#2#3$}}\kern-.52\wd0}}
\def\dashint{\Xinttt-}
\title{Homogenization of  some degenerate pseudoparabolic variational inequalities }
\author{ Mariya Ptashnyk \footremember{Dnd}{Department of Mathematics, School of Mathematical and Computer Sciences, Heriot-Watt University, Edinburgh EH14 4AS, UK; Division of Mathematics, University of Dundee, Dundee, DD1 4HN, UK, m.ptashnyk@hw.ac.uk}
}
\begin{document}
\maketitle

\begin{abstract} 
Multiscale analysis of a degenerate pseudo\-parabolic variational inequality, modelling  the two-phase flow with dynamical capillary pressure in a perforated domain, is the main topic of this work.  Regularisation and penalty operator methods are applied to show the existence of a solution of the nonlinear  degenerate pseudoparabolic variational inequality defined in a domain with microscopic perforations, as well as to derive a priori estimates for  solutions of the microscopic problem. The main challenge   is  the derivation of  a priori estimates for solutions of the variational inequality, uniformly with respect to the regularisation parameter and to the small parameter defining the scale of the microstructure. The method of two-scale convergence is used to derive the corresponding macroscopic obstacle problem.
\end{abstract}

{\small {\it Keywords:}
 degenerate PDEs,  pseudoparabolic variational inequalities,  obstacle problems, penalty operator method,  two-scale convergence,  homogenization
}

\section{Introduction} 
In this paper we consider multiscale analysis of a nonlinear degenerate pseudoparabo\-lic variational  inequality modelling  unsaturated flow  with dynamic capillary pressure in a  perforated porous medium.    Models for two-phase flow with dynamical capillary pressure, originally   proposed by \cite{Hassanizadeh, Pavone_1989}, 
consider Darcy's law for the flux of the  moisture content $u$  given by
$$
J= - A\,  k(u)(\nabla p + {\bf e}_n), 
$$
 and  assume that the pressure $p$ in the wetting phase  is  a function of the moisture content $u$ and its time derivative $\partial_t u$, i.e.\ in a simplified form,
$$
p=- \tilde P_c(u) + \tau \partial_t u, 
$$
 where the permeability function  $k(u)$ depends on the moisture content, the vector ${\bf e}_n =(0, \ldots, 0,1)$ determines the   direction of flow  due to gravity, and 
$A$ and $\tau$ are positive constants.  Then for the moisture content   $u$ we obtain a pseudoparabolic equation of the from
\begin{equation}\label{eq1}
\partial_t u = \nabla\cdot \big(A\,  k(u) [P_c (u) \nabla u + \tau \nabla \partial_t u + {\bf e}_n]\big), 
\end{equation}  
where $P_c(u) =- \tilde P_c^\prime(u)$.

If   considering a two-phase flow problem  in a perforated porous medium   with  Signorini's type conditions on the surfaces of perforations
\begin{equation}\label{eq2}
\begin{aligned}
& u\geq 0, \; \; \;  A\,   k(u) (P_c (u) \nabla u + \tau \nabla \partial_t u + {\bf e}_n)\cdot \nu\geq - f(t,x,u), \\
& u\big[A \,  k(u) (P_c (u) \nabla u + \tau \nabla \partial_t u + {\bf e}_n)\cdot \nu  + f(t,x,u)\big] =0, 
\end{aligned}
\end{equation} 
then  a weak formulation of  equation \eqref{eq1} together with  conditions  \eqref{eq2} results in a pseudoparabolic variational inequality of the form
\begin{equation}\label{ineq_11}
\begin{aligned} 
 \langle \partial_t u, v - u \rangle_{G^\ve}   +\left \langle A\,   k(u) [ P_c(u) \nabla u + \tau  \partial_t \nabla u +  {\bf e}_n] , \nabla(v - u)\right \rangle_{G^\ve}    + 
\langle  f(t,x,u), v - u \rangle_{\Gamma^\ve}  \geq 0, 
\end{aligned} 
\end{equation}
where $G^\ve\subset \mathbb R^n$, with $n=2,3$, denotes the perforated domain  and $\Gamma^\ve$ defines the boundaries of perforations.  

As an example of a porous medium with microscopic perforations we can consider a part of the soil perforated  by a root network, where  conditions \eqref{eq2} model water (solute) uptake by plant roots. 

In our analysis of the  obstacle problem \eqref{eq1} and  \eqref{eq2}, or equivalently variational inequality \eqref{ineq_11},   defined in a heterogeneous perforated domain $G^\ve$, where $\ve$ denotes a characteristic  size of perfora\-tions,  we shall consider  a function  $A(x)$ describing  the heterogeneity of the medium, instead of a constant A, and  a more general convection term,  describing flow transport by  a given velocity field.

Along with models for two-phase flow with dynamic capillary pressure \cite{Cuesta, Hassanizadeh, Pavone_1989},   pseudoparabolic equations are also used to model fluid filtration in  fissured porous media \cite{Barenblatt_1990}, heat transfer in a hete\-rogeneous medium \cite{Rubinshtein},  or  to regularise ill-posed transport problems \cite{Barenblatt_1993, Novick}.  Pseudoparabolic variational inequalities are considered  to describe  obstacle  \cite{Scarpini_1987} and free boundary problems \cite{DiBenedetto}. 
The well-posedness for non-degenerate pseudoparabolic equations and variational ine\-qua\-lities was studied  by many authors  \cite{Boehm_1985,  Pop,  DiBenedetto, Kenneth_1984, Mikelic,  Ptashnyk_1, Ptashnyk_2, Scarpini_1987, Showalter}.    Global existence results for  degenerate pseudoparabolic equations   are obtained  in \cite{Pop, Mikelic}.  The multiscale analysis  for non-degenerate pseudoparabolic equations was considered  in  \cite{Peszynska} and the method of  two-scale convergence was applied to derive the corresponding macroscopic equations.  To the best of  our knowledge, there are no  results  on homogenization of   pseudopara\-bo\-lic variational inequalities.    Several results are known on multiscale analysis of  ellip\-tic   \cite{Timofte_2016, Iosifyan, Jaeger_2014,  Pastukhova_2001, Sandrakov, Vorobev_2003}  and parabolic \cite{Jaeger, Mielke_Timofte, Rodrigues_1982, Shaposhnikova_2008} variational inequalities. 
In \cite{Timofte_2016} the periodic unfolding method was used to derive macroscopic variational inequality for the microscopic Signorini-Tresca problem.  
The method of  two-scale convergence was applied to derive macroscopic problems for  microscopic linear elasticity  equations with boundary conditions of Signorini types \cite{Iosifyan},  elliptic variational inequalities for obstacle problems \cite{Sandrakov},  and  evolutionary variational inequalities  \cite{Mielke_Timofte}. Weak convergence and  construction of  a corrector were considered  in \cite{Jaeger_2014, Shaposhnikova_2008, Vorobev_2003} to derive macroscopic problems for microscopic elliptic and parabolic variational inequalities  under certain conditions on the relation between the period  and the size of the microstructure.     In \cite{Rodrigues_1982} the multiscale analysis of a  parabolic variation inequality corresponding to the Stefan problem was performed using the H-convergence method \cite{Murat_1997}.    
Homogenization of variational inequalities in  domains with thick junctions, for which standard extension results do not hold, was studied  in \cite{Melnyk_2011, Melnyk_2016, Melnyk_2012} using the method of monotone operators  and construction of  appropriate auxiliary functions.

To prove existence of a solution of the microscopic problem, considered here,   the regularisation of degenerate coefficients in the pseudoparabolic variational inequality  together with a proper choice of test functions, similar to those proposed in \cite{Pop, Mikelic} for pseudoparabolic equations,  is considered. In the case of variational inequalities additional care is required  due to the fact that admissible test functions have to belong to a convex subset of the corresponding function space.  The penalty operator method is applied to show existence of a solution of the pseudoparabolic variational inequality with regularised coefficients.  To pass to the limit in the nonlinear penalty operator we prove  strong convergence of  approximations of solutions of the corresponding nonlinear pseudoparabolic equation.  
The main step in the analysis and derivation of the  macroscopic variational inequality, for the microscopic  problem  considered here,  is to derive a priori estimates uniformly with respect to small parameter $\ve$.   The main idea in the derivation of a priori estimates for the time derivative of the gradient of a solution of variational inequality, similar to \cite{Mikelic},  is to use the specific structure of the degenerate coefficients which allows to prove that some negative power of a solution of the variational inequality is a $L^p$-function with $1<p<2$.   The uniqueness result is obtained   in the case when the coefficient $k(u)$ in  front of the pseudoparabolic term is non-degenerate  and under  additional regularity assumptions  on solutions of the pseudoparabolic variational inequality.

The paper is organised as follows.  In Section~\ref{section1} we formulate the microscopic  obstacle problem defined in a perforated domain $G^\ve$. In Section~\ref{section12} we  prove existence and uniqueness results for the  regularised problem,  derive a priori estimates,  and show existence of a solution  of the original  degenerate   pseudo\-parabolic  variational inequality defined in the perforated domain~$G^\ve$. In Section~\ref{section2} we prove convergence results as $\ve \to 0$ and derive macroscopic problem defined in a homogeneous domain $G$ with the  constraint $u(t,x) \geq 0$ in $(0,T)\times G$. In Appendix we summarise the main compactness results for the two-scale convergence used in the derivation of the macroscopic pseudoparabolic variational inequality.

\section{Formulation of mathematical problem} \label{section1}
A general obstacle problem can be formulated as a variational inequality  
\begin{equation}\label{main1}
\begin{aligned} 
& u \in \mathcal K(t) , \\
& \langle \partial_t b(u), v - u \rangle  + \langle \mathcal A(x, \nabla u, \partial_t \nabla u), \nabla (v - u) \rangle \geq \langle R(t,x, u), v - u \rangle
\end{aligned} 
\end{equation}
for $v \in L^2(0,T; \mathcal K(t) )$, where  $\mathcal K(t)$ is a closed convex set in $H^1(G)$. We shall consider  variational inequality \eqref{main1} defined in a perforated domain $G^\ve$   with a periodic distribution of perforations.  

To define the domain $G^\ve$, where  $\ve$ denotes the characteristic  size of perforations, we consider a bounded  domain $G\subset \mathbb R^n$, for $n=2, 3$,  where $G$ is quasi-convex or   $\partial G\subset C^{1,\alpha}$ for some $0<\alpha <1$, a `unit cell' $Y\subset \mathbb R^n$, a subset $Y^0$,  with   $\overline{Y^0} \subset Y$ and Lipschitz boundary $\Gamma = \partial Y^0$, and denote $Y^\ast = Y \setminus \overline{Y^0}$. Then    
$$
G^\ve_0 = \bigcup_{\xi\in \Xi^\ve} \ve(Y^0 + \xi),  \; \;  \widetilde G^\ve ={\rm Int}  \bigcup_{\xi\in \Xi^\ve} \ve(\overline Y + \xi),
$$
where $\Xi^\ve= \{ \xi  \in \mathbb Z^n\; : \;  \ve(\overline{Y^0} + \xi)  \subset G \}$, 
and  $G^\ve = G\setminus \overline {G^\ve_0}$.  The   boundaries of perforations are defined by 
$$
\Gamma^\ve = \bigcup_{\xi\in \Xi^\ve} \ve(\Gamma + \xi) .
$$
For the nonlinear function $\mathcal A$ in the variational inequality in \eqref{main1}  we consider   
$$\mathcal A(x, \nabla u^\ve, \partial_t \nabla u^\ve) = A^\ve(x) k(u^\ve) (P_c(u^\ve)  \nabla u^\ve +  \partial_t \nabla  u^\ve ) - F^\ve(t,x, u^\ve), $$
and assume that  $R(t,x,u^\ve)=0$, where the functions  $b$, $A^\ve$, $k$, $P_c$, and $F^\ve$ are specified below. 
 On the  microscopic boundaries $\Gamma^\ve$ we specify the following Signorini type conditions 
\begin{equation*}
\begin{aligned} 
   u^\ve & \geq 0, \quad \;  \\
 \big( A^\ve(x) k(u^\ve) [ P_c(u^\ve) \nabla u^\ve +  \partial_t \nabla u^\ve ] - F^\ve(t,x, u^\ve)\big)  \cdot \nu   + \ve f^\ve(t,x, u^\ve) & \geq 0,  \\
 u^\ve   \left[\big( A^\ve(x) k(u^\ve)  [P_c(u^\ve) \nabla  u^\ve +  \partial_t \nabla u^\ve ] - F^\ve(t,x, u^\ve)\big) \cdot \nu + \ve f^\ve(t,x,u^\ve)\right] & =0, 
\end{aligned} 
\end{equation*} 
where  function $f^\ve$ is specified below. 
Then the closed convex set  $\mathcal K^\ve$ is defined as 
\begin{equation}\label{K}
\mathcal K^\ve = \{ v \in H^1(G^\ve)\;  : \;   v = \kappa_D \; \text{ on } \partial G,  \;  v \geq  0 \text{ on } \Gamma^\ve \}, 
\end{equation}
with some constant   $0<\kappa_D \leq 1$,   
  and   the corresponding  variational inequality reads
\begin{equation}\label{main2}
\begin{aligned} 
 \langle \partial_t b(u^\ve), v - u^\ve \rangle_{G^\ve_T}  +\left \langle A^\ve(x) k(u^\ve) [ P_c(u^\ve) \nabla u^\ve +  \partial_t \nabla u^\ve] , \nabla(v - u^\ve)\right \rangle_{G^\ve_T} \qquad  \\  
 \qquad -
\langle   F^\ve(t,x, u^\ve) ,  \nabla (v - u^\ve) \rangle_{G^\ve_T}   + 
\langle \ve f^\ve(t,x,u^\ve), v - u^\ve \rangle_{\Gamma^\ve_T}  \geq 0,
\end{aligned} 
\end{equation}
for $v - \kappa_D\in L^2(0,T; V)$ and  $v(t)\in  \mathcal K^\ve$ for $t\in (0,T)$, where 
$$
V= \{ v \in H^1(G^\ve) : \; v = 0 \; \text{ on } \partial G \}. 
$$

\noindent  Here we use notation  $G_T = (0,T)\times G $, $G^\ve_T= (0,T)\times G^\ve$, $\Gamma_T= (0,T)\times \Gamma$,  $\Gamma^\ve_T = (0,T)\times \Gamma^\ve$,  $Y_T = (0,T)\times Y$,  $Y^\ast_T = (0,T)\times Y^\ast$, and
$$
\begin{aligned} 
\langle \phi, \psi \rangle_{G^\ve_T} =\int_0^T  \int_{G^\ve} \phi\,   \psi  \, dxdt, \quad  \text{  for } \phi \in L^p(0,T; L^q(G^\ve)), \;   \psi \in L^{p^\prime}(0,T; L^{q^\prime}(G^\ve)),
\\  
\langle \phi, \psi \rangle_{\Gamma^\ve_T} = \int_0^T \int_{\Gamma^\ve} \phi \,  \psi  \, d\gamma dt,  \quad   \text{ for } \phi \in L^p(0,T; L^q(\Gamma^\ve)), \;  \psi \in L^{p^\prime}(0,T; L^{q^\prime}(\Gamma^\ve)), 
\end{aligned}
$$
where $1< p, p^\prime, q, q^\prime  < \infty$ with  $1/p + 1/p^\prime =1$ and $1/q + 1/q^\prime =1$.\\
{\bf Remark. } Notice that $\langle \cdot, \cdot \rangle_{G^\ve_T}$ and $\langle \cdot, \cdot \rangle_{\Gamma^\ve_T}$ are used as short notation for an integral  of a product of two functions. In most cases we will consider a product of two $L^2$-functions, however we shall use the same notation for the integral of a product of $L^p$- and $L^{p^\prime}$-functions, which is well defined.  \\

We shall consider the following assumptions on functions  $A^\ve$, $b$, $k$, $P_c$, $F^\ve$,  and $f^\ve$.
\begin{assumption}\label{assum_1}
\begin{itemize}    
\item[1)]   $k : \mathbb R \to \mathbb R $ is Lipschitz continuous, nondecreasing,  with  $k(z) >0$  for  $ z>0$ and  $ k(0) = 0$,  e.g.\    
$$k(z) = \frac{\vartheta_k z^\beta}{1+ \gamma_k  z^\beta}  \quad 
\text{  for  some } \,  \vartheta_k, \,    \gamma_k  >0 \text{  and  }  \beta \geq 1, $$ 
$$ P_c(z) = \frac{\vartheta_p z^{-\lambda}}{1+\gamma_p(z) z^\lambda}  \; \; \text{  for    }  \; \vartheta_p,  \lambda>0, \text{ nonnegative } \gamma_p \in C^\infty_0(\mathbb R),   \text{ and  }
 |k(z) P_c(z)| \leq C < \infty  \; \text{   for }  z\geq 0. $$   

\item[2)] $A\in L^\infty(Y)$ is extended $Y$-periodically to $\mathbb R^n$, and $A(y) \geq a_0 >0$ for $y \in Y$,   with    $A^\ve(x) = A(x/\ve)$ for $x\in \mathbb R^n$. 

 \item[3)]   $ b : \mathbb R \to \mathbb R$ is  continuous,  nondescreasing,  and twice continuously differentiable for $z>0$, with $b(z) > 0 $ for   $z > 0$,  $b(0) = 0$,   and $|b^\prime(z)| \leq \gamma_b (1+ z^2)$ for $z\geq 1$ and  $\gamma_b>0$, e.g.\   
 $b(z) = \vartheta_b z^\alpha$, with $0<\alpha \leq 3$ and $\vartheta_b >0$.
\item[4)]    $F^\ve:  \mathbb R^+  \times \mathbb R^{n}\times \mathbb R  \to \mathbb R^n$  is Lipschitz continuous,  
$F^\ve(t,x,z) = Q^\ve(t,x) H(z) + k(z) g$,     where   $ |H^\prime(z)(b^\prime(z))^{-\frac 12} | \leq C< \infty$  for $ z \geq 0$, 
$ g \in \mathbb R^n$ is a constant vector,  $\nabla_x \cdot Q^\ve(t, x) = 0 $  for $ (t,x) \in G^\ve_T$,   $ Q^\ve(t,x)\cdot \nu = 0 $  on $ \Gamma^\ve_T$,  $Q^\ve \in L^\infty(G_T^\ve)$, 
and  $Q^\ve(t,x) \to Q(t,x,y)$ strongly two-scale, $Q \in L^2(G_T, H_{\rm div}(Y^\ast))\cap L^\infty(G_T\times Y^\ast)$, where 
$  H_{\rm div}(Y^\ast) = \{ v \in L^2(Y^\ast)^n, \; \nabla_y\cdot v=0 \text{ in } Y^\ast,  \; \text{ and} \;    v  \text{ is }  Y\text{-periodic}\}. $
\item[5)]  $ f^\ve(t,x,\xi) = f_0(t,x/\ve) f_1(\xi)$, where  $ f_0 \in C^1([0,T]; C^1_{\rm per} (\Gamma))$,  with  $ f_0(t,y) \geq 0  $  for $ (t,y) \in  \Gamma_T$, and 
 $f_1\in C^1_0(\mathbb R)$,  with  $ \xi f_1(\xi) \geq 0$,   $f_1(0) = 0$,  and 
 $$ \Big |f_1(\xi) \int_\xi^{\kappa_D} \frac {d\eta} {k(\eta)} \Big|\leq C \; \;  \text{ for } \;  0 \leq \xi \leq \kappa_D. $$   
\item[6)] Initial condition $u_0\in \mathcal K$   and 
$$ \int_{\kappa_D}^{u_0} b^\prime(\xi) \int_{\kappa_D}^{\xi} \frac {dz} {k(z)}  \, d \xi \;   \in\,  L^1(G), $$
\begin{equation}\label{K_macro}
\text{ where } \; \; \mathcal K = \{ v \in H^1(G) : \;   v = \kappa_D \; \text{ on } \partial G  \; \text{ and } \;   v \geq  0 \text{ in } G\}.
\end{equation}
\end{itemize} 
\end{assumption} 
\noindent{\bf Remark.} Notice that assumptions 1) and 4)  in Assumption~\ref{assum_1} are similar to the corresponding assumptions in \cite{Pop, Mikelic}, however for the vector field $Q^\ve$ additional assumptions are required due to the perforated microstructure of domain $G^\ve$.  Function $F^\ve$ describes the directed flow due to   a given velocity field $Q^\ve$ and gravity $g$. 
As an example of a function $Q^\ve$ satisfying assumption 4) we can consider  a solution of the Stokes problem 
\begin{equation}\label{stokes}
\begin{aligned} 
- \mu\,  \ve^2  \Delta Q^\ve + \nabla p^\ve = 0 \quad \text{ in } \; G^\ve, && \qquad 
{\rm div }\,  Q^\ve & = 0 \quad \text{ in } \; G^\ve, \\
Q^\ve = 0   \quad \text{ on } \;  \Gamma^\ve,  &&\quad 
Q^\ve &=  v     \quad \text{ on }\;  \partial  G, 
\end{aligned} 
\end{equation} 
for  $t \in (0,T)$ and a  given velocity    $v \in L^\infty(0,T; H^{2} (G))^n$ with ${\rm div } \, v(t,x)  = 0$  for  $x\in G$ and $t\in (0,T)$.  The regularity theory for Stokes equations, see e.g.\ \cite{Brown_Shen, Geng_Kilty, Mitrea_Wright},  implies that for each fixed $\ve$ there exists a solution $(Q^\ve, p^\ve) \in L^\infty(0,T; W^{1,p}(G^\ve)) \times L^\infty(0,T; L^p(G^\ve) / \IR)$, with  $2\leq p < n + \delta_1$ and some $\delta_1 >0$,   of system \eqref{stokes}.  Then using the Sobolev embedding theorem we obtain    $Q^\ve \in L^\infty(0,T; L^\infty(G^\ve))$. 
The multiscale analysis results for the Stokes system, see e.g.\ \cite{Hornung},  imply existence of a velocity field $Q \in L^\infty(0,T; L^2(G; H^1_{\rm per} (Y^\ast)))$, pressure $p \in L^\infty(0,T; L^2(G)/\mathbb R)$,   and $\pi \in L^\infty(0,T; L^2(G\times Y^\ast)/\mathbb R)$,  such that   $Q^\ve \rightharpoonup Q$ two-scale,  $p^\ve \rightharpoonup p$ weakly-$\ast$ in $L^\infty(0,T; L^2(G))$,  and  $Q$ is a solution of
\begin{equation}\label{Stokes_micro-Macro}
\begin{aligned} 
 -& \mu\,  \Delta_y Q +  \nabla_y \pi + \nabla p  = 0 \quad && \text{ in } \;  Y^\ast, \\
& {\rm div}_y\,  Q = 0 \quad \text{ in } \;  Y^\ast, \quad 
Q = 0   \; \;  && \text{ on } \;   \Gamma,    
\end{aligned} 
\end{equation}
and  ${\rm div} \int_{Y^\ast} Q(t, x,y) dy = 0$  for $(t,x) \in G_T$, with  
$$
\begin{aligned} 
{\rm div}( K \nabla p  ) =  0 \text{ in } G, \qquad  K \nabla p \cdot \nu = v \cdot \nu \; \; \text{ on } \partial G,   
\end{aligned} 
$$
for $t\in [0, T]$ and constant permeability tensor $K$ determined by the corresponding `unit cell' problems.   Using the regularity theory for  elliptic equations with Neumann boundary conditions,  together with the assumptions on $G$ and $v$,  we obtain 
$\nabla p \in L^\infty(0,T; L^\infty(G))^n$, see e.g.\  \cite{Behrndt_Micheler,  Gesztesy_Mitrea, Kenig}.  Then  applying the regularity results for the Stokes system, see e.g.\ \cite{Brown_Shen, Geng_Kilty, Mitrea_Wright}, to  problem   \eqref{Stokes_micro-Macro} yields $Q \in L^\infty(G_T; L^\infty(Y^\ast))$. Notice that in \eqref{Stokes_micro-Macro} variables $t$ and $x$ play the role of parameters in the Stokes operator  with respect to the microscopic variable  $y$.

To show strong two-scale convergence of $Q^\ve$ we consider $ Q^\ve - R^\ve_{Y^\ast} (v)$, where  $ R^\ve_{Y^\ast} (v)(x) =  R_{Y^\ast} (v_\xi^\ve)(x/\ve)$, with  $v_\xi^\ve(y) = v(\ve y)$ for $y \in \ve(Y + \xi)$ and
$\xi \in \Xi^\ve$,   and $R_{Y^\ast}: W^{1,p}(Y)^n \to W^{1,p}_\Gamma(Y^\ast)^n$,  for $1< p < \infty$,  is a  restriction operator,  see e.g.\ \cite{Mikelic_1991, Tartar},      as a test function in~\eqref{stokes}  and obtain
\begin{equation}\label{Q_limit}
\begin{aligned}
 \mu \|\nabla_y Q\|^2_{L^2(G\times Y^\ast)}   \leq \mu \liminf_{\ve \to 0 } \|\ve \nabla Q^\ve\|^2_{L^2(G^\ve)} 
\leq \mu   \limsup_{\ve \to 0 } \|\ve \nabla Q^\ve\|^2_{L^2(G^\ve)} 
=\lim\limits_{\ve \to 0 }  \ve^2  \mu \la \nabla Q^\ve, \nabla R^\ve_{Y^\ast} (v) \ra_{G^\ve}   
\end{aligned} 
\end{equation}
for $t\in [0,T]$.   Here $W^{1,p}_\Gamma(Y^\ast)^n = \{ w \in W^{1,p}(Y^\ast)^n\, : \, w =0 \text{ on } \Gamma\}$.   Notice that $R^\ve_{Y^\ast} (v) = v$ in $G \setminus  \widetilde G^\ve$ and  the construction of the restriction operator ensures
$$
\begin{aligned} 
\| {\mathcal T}^{\ve_n}  R^{\ve_n}_{Y^\ast}(v) -  {\mathcal T}^{\ve_m} R^{\ve_m}_{Y^\ast} (v) \|_{L^2(G \times Y^\ast)} +  \| \nabla_y ({\mathcal T}^{\ve_n} R^{\ve_n}_{Y^\ast}(v) - {\mathcal T}^{\ve_m}  R^{\ve_m}_{Y^\ast} (v) ) \|_{L^2(G \times Y^\ast)}
\\ \leq C  
\big[\| {\mathcal T}^{\ve_n} v  -  {\mathcal T}^{\ve_m}v \|_{L^2(G \times Y)} +  \|\nabla_y  {\mathcal T}^{\ve_n} v  -  \nabla_y {\mathcal T}^{\ve_m}v \|_{L^2(G \times Y)}  \big] \to 0 
\end{aligned} 
$$ 
as $n, m \to  \infty$ and for $t \in [0,T]$, where $\mathcal T^\ve$ is the periodic unfolding operator, see e.g.\ \cite{Cioranescu}.  Hence $R^{\ve}_{Y^\ast}(v) \to \hat R_{Y^\ast} (v)$  and  $\ve \nabla R^{\ve}_{Y^\ast}(v) \to  \nabla_y \hat R_{Y^\ast} (v)$   strongly two-scale as $\ve \to 0$, with $\hat R_{Y^\ast} (v) \in L^\infty(G_T; H^1_{\rm per} (Y^\ast))$. 
Then  using the two-scale convergence of $Q^\ve$ we obtain  
\begin{equation}\label{Q_lim_2}
\lim\limits_{\ve \to 0 }  \ve^2  \mu \la \nabla Q^\ve, \nabla R^\ve_{Y^\ast} (v) \ra_{G^\ve}  =  \mu  \la \nabla_y  Q,  \nabla_y \hat R_{Y^\ast} (v)   \ra_{L^2(G\times Y^\ast)}  
\end{equation}
for $t \in [0, T]$. Taking $Q- \hat R_{Y^\ast} (v)$ as a test function in \eqref{Stokes_micro-Macro} and using the fact that $\hat R_{Y^\ast} (v)(t, x, \cdot )$ is $Y$-periodic,  $\hat R_{Y^\ast} (v) =0$ on $\Gamma$,   ${\rm div}_y \hat R_{Y^\ast} (v) =0$,  and ${\rm div} \int_{Y^\ast} \hat R_{Y^\ast} (v)(t, x,y) dy =0$,  yield  
$$
\mu\,  \la \nabla_y Q, \nabla_yQ- \nabla_y \hat R_{Y^\ast} (v)  \ra_{G\times Y^\ast}  = 0 
$$
for $t \in [0,T]$. Combining the last equality with inequality  \eqref{Q_limit}  and convergence in  \eqref{Q_lim_2}  implies 
$$
\lim_{\ve \to 0 } \|\ve \nabla Q^\ve\|_{L^2(G^\ve)} = \|\nabla_y Q\|_{L^2(G\times Y^\ast)}
$$
for $t\in[0,T]$, and we have the strong two-scale convergence of $\ve \nabla Q^\ve$ and strong convergence of  unfolded sequence   $\nabla_y \mathcal T^\ve Q^\ve$   in $L^2(G_T\times Y^\ast)$.  Using zero Dirichlet boundary conditions on $\Gamma^\ve$  and applying the Poincare inequality we obtain  
$$
\|\mathcal T^{\ve_m} Q^{\ve_m} -  \mathcal T^{\ve_n} Q^{\ve_n}\|_{L^2(G_T\times Y^\ast)} \leq   C \|\nabla_y (\mathcal T^{\ve_m}  Q^{\ve_m}  - \mathcal T^{\ve_n} Q^{\ve_n}) \|_{L^2(G_T\times Y^\ast)} \to 0 
$$
as $n, m \to \infty$. Thus we have strong convergence  of  $\mathcal T^{\ve} Q^{\ve}$  in $L^2(G_T \times Y^\ast)$  and strong   two-scale convergence of $Q^\ve$ to  $Q$. \\
 
As next we give the definition of a solution of the microscopic inequality  \eqref{main2}. 
\begin{definition} 
A solution of  inequality    \eqref{main2} is  a  function $u^\ve$ such that  $u^\ve  -   \kappa_D \in  L^2(0, T ; V)$,    $\partial_t b(u^\ve) \in L^2(0,T; L^{r}(G^\ve))$, 
with  $6/5\leq r < 4/3$,    
$\sqrt{k(u^\ve)} \nabla \partial_t u^\ve \in L^2(G_T^\ve)$,  and  $u^\ve(t) \in \mathcal K^\ve$ for $t\in[0, T]$, and $u^\ve$ satisfies   variational inequality \eqref{main2} 
for $v \in  L^2(0,T; \mathcal K^\ve)$  and initial condition $u^\ve(t) \to u_0$ in $L^2(G^\ve)$ as $t \to 0$. 
\end{definition}

\section{A priori estimates and existence result} \label{section12}

Similar to \cite{Mikelic}, in order to prove the existence result for  variational inequality \eqref{main2},   we first consider  regularisation of functions $b$, $k$, and $P_c$, given  by $b_\delta(v) = b(v^{+} + \delta)$, with  $b_\delta(v) = b(v)$ if $b(v)=\vartheta_b v$ for some constant $\vartheta_b>0$,    $k_\delta(v) = k(v^{+}+ \delta)$, and  $P_{c, \delta}(v) = P_c(v^{+} + \delta)$,   where   $\delta >0$ and $v^{+} = \max\{ v, 0 \}$. 

Then the  corresponding regularised problem reads
 \begin{eqnarray}\label{main2_regul}
  \langle \partial_t b_\delta(u^\ve_\delta), v - u^\ve_\delta \rangle_{G^\ve_T}  +  \langle A^\ve(x) k_\delta(u^\ve_\delta) [P_{c, \delta}(u^\ve_\delta) \nabla u^\ve_\delta +  \partial_t \nabla u^\ve_\delta] , \nabla(v - u^\ve_\delta)  \rangle_{G^\ve_T} \nonumber \\
 - \langle   F^\ve(t,x, u^\ve_\delta) ,  \nabla (v - u^\ve_\delta) \rangle_{G^\ve_T}   + \langle\ve f^\ve(t,x, u^\ve_\delta), v - u^\ve_\delta \rangle_{\Gamma^\ve_T}     \geq 0, \;  \quad   \\
 \text{and }  u^\ve_\delta(t) \in \mathcal K^\ve, \;   \text{ for } v \in L^2(0, T;  \mathcal K^\ve),  \hspace{4.1 cm }  \nonumber
\end{eqnarray}
and $u^\ve_\delta(0) = u_0$ in $L^2$-sense. 

To show the existence of a solution of problem  \eqref{main2_regul} we  apply the penalty operator method \cite{Kinderlehrer, Lions}  and consider 
\begin{equation}\label{penalty}
\begin{aligned} 
\partial_t b_\delta( u^\ve_{\delta, \mu}) - \nabla \cdot \big( A^\ve(x) k_\delta(u^\ve_{\delta, \mu}) [ P_{c, \delta}(u^\ve_{\delta, \mu}) \nabla u_{\delta, \mu}^\ve +  \partial_t  \nabla u^\ve_{\delta, \mu}  ] \big)  + 
\nabla \cdot F^\ve(t,x, u^\ve_{\delta, \mu}) \\
+ \frac 1 \mu \mathcal B (u^\ve_{\delta, \mu} - \kappa_D) = 0  \quad  & \text{ in } G^\ve_T,  \\
\big(A^\ve(x) k_\delta(u^\ve_{\delta, \mu})  [P_{c,\delta}(u^\ve_{\delta, \mu}) \nabla  u^\ve_{\delta, \mu} +  \partial_t \nabla u^\ve_{\delta, \mu} ] - F^\ve(t,x, u^\ve_{\delta, \mu})\big) \cdot \nu  = -  \ve f^\ve(t,x,u^\ve_{\delta, \mu})   \quad  &\text{ on }  \Gamma^\ve_T, 
\end{aligned} 
\end{equation} 
where $\mu >0$ and  a penalty operator  $\mathcal B: L^2(0,T; V) \to L^2(0,T; V^\prime)$ is monotone, bounded, hemicontinuous, and  $\mathcal B(v-\kappa_D) =0$ for $v(t) \in \mathcal K^\ve$.

\begin{lemma} \label{Lemma_existence_regular}
Under Assumption~\ref{assum_1}   there exists a  solution  $u_\delta^\ve \in L^2(0,T; \mathcal K^\ve)$  of  \eqref{main2_regul}  completed  with  initial condition $u^\ve_{\delta}(0) = u_0$ in $G^\ve$, with $\partial_t u_\delta^\ve \in L^2(0,T; H^1(G^\ve))$ and $\partial_t b_\delta(u^\ve_\delta) \in L^2(G^\ve_T)$. Under additional regularity assumption  $\partial_t u^\ve_\delta \in L^2(0,T; W^{1, p}(G^\ve))$  and $u_0 \in W^{1, p}(G^\ve)$  for $p>n$,  or if $k(\xi)= {\rm const}$,   $P_c$ is Lipschitz continuous for $\xi>0$, and $\nabla u_\delta^\ve \in L^2(0,T; L^p(G^\ve))$, variational inequality \eqref{main2_regul} has a unique solution. 
\end{lemma}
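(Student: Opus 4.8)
The plan is to use the penalty formulation \eqref{penalty}: solve it for fixed $\delta,\mu>0$, derive a priori estimates uniform in $\mu$, and pass to the limit $\mu\to 0$ to recover \eqref{main2_regul}. For fixed $\delta$ all coefficients are non‑degenerate, $k_\delta\ge k(\delta)>0$, $b_\delta'\ge 0$, and $P_{c,\delta}$ is bounded (by $|kP_c|\le C$ together with $k_\delta\ge k(\delta)$), so in the weak form of \eqref{penalty} the map $\psi\mapsto \la b_\delta'(u)\psi,\phi\ra_{G^\ve}+\la A^\ve k_\delta(u)\nabla\psi,\nabla\phi\ra_{G^\ve}$ is, by the Poincar\'e inequality on $V$, coercive on $V$ and hence boundedly invertible from $V'$ onto $V$. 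I would therefore use a Galerkin scheme in a basis of $V$: the resulting system $M(\xi)\dot\xi=G(t,\xi)$ has symmetric positive definite mass matrix $M(\xi)$, so it is locally solvable, the $\mu$‑uniform bounds below rule out blow‑up, and the monotonicity and (trace‑)continuity of $\mathcal B$ together with Aubin--Lions compactness allow passage to the Galerkin limit, producing $u^\ve_{\delta,\mu}$ with $u^\ve_{\delta,\mu}-\kappa_D\in L^2(0,T;V)$, $\partial_t u^\ve_{\delta,\mu}\in L^2(0,T;H^1(G^\ve))$ and $u^\ve_{\delta,\mu}(0)=u_0$.

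Next I would derive estimates uniform in $\mu$. Testing \eqref{penalty} with $\partial_t u^\ve_{\delta,\mu}\in L^2(0,T;V)$, the penalty term equals $\tfrac1\mu\tfrac{d}{dt}\Psi(u^\ve_{\delta,\mu}-\kappa_D)$ with $\Psi\ge 0$ the convex potential of $\mathcal B$ and $\Psi(u_0-\kappa_D)=0$ since $u_0\in\mathcal K$, so it moves to the good side; using $b_\delta'\ge 0$, $A^\ve k_\delta(u^\ve_{\delta,\mu})\ge a_0k(\delta)>0$, $|k_\delta P_{c,\delta}|\le C$, Assumption~\ref{assum_1}, scaled trace inequalities and Gronwall's inequality gives $\nabla u^\ve_{\delta,\mu}$ bounded in $L^\infty(0,T;L^2(G^\ve))$, $\partial_t\nabla u^\ve_{\delta,\mu}$ bounded in $L^2(G^\ve_T)$, and $\tfrac1\mu\Psi(u^\ve_{\delta,\mu}(t)-\kappa_D)\le C$. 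Testing with $u^\ve_{\delta,\mu}-\kappa_D$ and using the coercivity estimate $\|\mathcal B(w)\|_{V'}^2\le C\la\mathcal B(w),w\ra$ of the penalty operator bounds $\tfrac1\mu\mathcal B(u^\ve_{\delta,\mu}-\kappa_D)$ in $L^2(0,T;V')$; then \eqref{penalty} yields $\partial_t b_\delta(u^\ve_{\delta,\mu})=b_\delta'(u^\ve_{\delta,\mu})\partial_t u^\ve_{\delta,\mu}$ bounded in $L^2(G^\ve_T)$, via $|b'(z)|\le\gamma_b(1+z^2)$ and the embeddings $L^\infty(0,T;H^1)\hookrightarrow L^\infty(0,T;L^6)$, $\partial_t u^\ve_{\delta,\mu}\in L^2(0,T;L^6)$ (for $n=2,3$).

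These bounds give, along a subsequence, $u^\ve_{\delta,\mu}\rightharpoonup u^\ve_\delta$ in $H^1(0,T;H^1(G^\ve))$, weakly-$\ast$ in $L^\infty(0,T;H^1(G^\ve))$, and — by Aubin--Lions and a compact trace embedding — strongly in $L^2(G^\ve_T)$, in $L^2(\Gamma^\ve_T)$ and a.e.; hence $b_\delta,k_\delta,P_{c,\delta},F^\ve,f^\ve$ evaluated at $u^\ve_{\delta,\mu}$ converge strongly in the relevant $L^q$‑spaces. Since $\tfrac1\mu\mathcal B(u^\ve_{\delta,\mu}-\kappa_D)$ is bounded in $L^2(0,T;V')$, $\mathcal B(u^\ve_{\delta,\mu}-\kappa_D)\to 0$ there, and the standard penalty‑operator lemma (see \cite{Lions, Kinderlehrer}) gives $u^\ve_\delta(t)\in\mathcal K^\ve$ for all $t$. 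For $v\in L^2(0,T;\mathcal K^\ve)$, testing \eqref{penalty} with $v-u^\ve_{\delta,\mu}$ and using $\mathcal B(v-\kappa_D)=0$ and monotonicity makes the penalty term $\le 0$, so it can be dropped, leaving precisely the bracket of \eqref{main2_regul} with $u^\ve_{\delta,\mu}$ in place of $u^\ve_\delta$ and sign $\ge 0$. In passing to the limit, the only terms not covered by weak$\times$strong products are the two quadratic in $\nabla u^\ve_{\delta,\mu}$, namely $\la A^\ve(k_\delta P_{c,\delta})(u^\ve_{\delta,\mu})|\nabla u^\ve_{\delta,\mu}|^2\ra_{G^\ve_T}$ and $\la A^\ve k_\delta(u^\ve_{\delta,\mu})\partial_t\nabla u^\ve_{\delta,\mu},\nabla u^\ve_{\delta,\mu}\ra_{G^\ve_T}$; the latter, after the identity $\la A^\ve k_\delta(u)\partial_t\nabla u,\nabla u\ra=\tfrac12\tfrac{d}{dt}\la A^\ve k_\delta(u),|\nabla u|^2\ra-\tfrac12\la A^\ve k_\delta'(u)\partial_t u,|\nabla u|^2\ra$ and time integration, involves the end‑point energy $\la A^\ve k_\delta(u^\ve_{\delta,\mu}(T)),|\nabla u^\ve_{\delta,\mu}(T)|^2\ra$, for which weak lower semicontinuity has the wrong sign. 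Thus \textbf{the main obstacle} is to upgrade $\nabla u^\ve_{\delta,\mu}\rightharpoonup\nabla u^\ve_\delta$ to strong convergence in $L^2(G^\ve_T)$ (and of the trace at $t=T$). I would obtain this from the pseudoparabolic structure and the non‑degeneracy $k_\delta\ge k(\delta)>0$: since \eqref{penalty} is a genuine equation it yields an energy identity for $u^\ve_{\delta,\mu}$, and combining the uniform bound on $\partial_t\nabla u^\ve_{\delta,\mu}$ with a monotonicity argument (testing \eqref{penalty} against $\partial_t u^\ve_{\delta,\mu}$ and against $u^\ve_{\delta,\mu}-v$ for suitable $v$) and weak lower semicontinuity, one deduces convergence of the energies $\la A^\ve k_\delta(u^\ve_{\delta,\mu}(t)),|\nabla u^\ve_{\delta,\mu}(t)|^2\ra$, hence strong convergence of $\sqrt{k_\delta(u^\ve_{\delta,\mu})}\,\nabla u^\ve_{\delta,\mu}$ and, $k_\delta$ being bounded below, of $\nabla u^\ve_{\delta,\mu}$ itself; then all remaining terms pass to the limit, $u^\ve_\delta$ solves \eqref{main2_regul}, and $u^\ve_\delta(0)=u_0$ follows from $u^\ve_{\delta,\mu}(0)=u_0$ and weak continuity in time.

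For uniqueness, let $u^\ve_\delta,\tilde u^\ve_\delta$ be two solutions and $d=u^\ve_\delta-\tilde u^\ve_\delta\in L^2(0,T;V)$, with $d(0)=0$. Taking $v=\tilde u^\ve_\delta$ in the inequality for $u^\ve_\delta$ and $v=u^\ve_\delta$ in that for $\tilde u^\ve_\delta$, adding, writing $b_\delta(u^\ve_\delta)-b_\delta(\tilde u^\ve_\delta)=c_\delta d$ with $c_\delta\ge0$, and applying the product rule to the pseudoparabolic term, one gets $\tfrac12\tfrac{d}{dt}\la A^\ve k_\delta(u^\ve_\delta),|\nabla d|^2\ra_{G^\ve}+\tfrac12\tfrac{d}{dt}\la c_\delta,|d|^2\ra_{G^\ve}$ bounded by $\la A^\ve(k_\delta(\tilde u^\ve_\delta)-k_\delta(u^\ve_\delta))\partial_t\nabla\tilde u^\ve_\delta,\nabla d\ra_{G^\ve}$ plus commutator terms $\la A^\ve k_\delta'(u^\ve_\delta)\partial_t u^\ve_\delta,|\nabla d|^2\ra$, $\la\partial_t c_\delta,|d|^2\ra$ and the $P_{c,\delta}$‑, $F^\ve$‑, $f^\ve$‑contributions. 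Using $|b_\delta''|\le C_\delta(1+|z|)$, Assumption~\ref{assum_1}, the Sobolev embedding $H^1(G^\ve)\hookrightarrow L^{2n/(n-2)}(G^\ve)$ and scaled trace inequalities, each of these is bounded by $g(t)\,\|d(t)\|_{H^1(G^\ve)}^2$ with $g\in L^1(0,T)$; the only delicate one is $\la A^\ve(k_\delta(\tilde u^\ve_\delta)-k_\delta(u^\ve_\delta))\partial_t\nabla\tilde u^\ve_\delta,\nabla d\ra$, and this is exactly where the extra hypothesis is used — with $\partial_t\tilde u^\ve_\delta\in L^2(0,T;W^{1,p}(G^\ve))$, $p>n$, H\"older gives $|\la\cdot\ra|\le C\|\partial_t\nabla\tilde u^\ve_\delta\|_{L^p(G^\ve)}\|d\|_{L^{2n/(n-2)}(G^\ve)}\|\nabla d\|_{L^r(G^\ve)}$ with $r<2$, so $\|\nabla d\|_{L^r(G^\ve)}\le C\|\nabla d\|_{L^2(G^\ve)}$ and the factor $\|\partial_t\nabla\tilde u^\ve_\delta\|_{L^p(G^\ve)}$ lies in $L^2(0,T)\subset L^1(0,T)$; in the alternative case ($k$ constant, $P_c$ Lipschitz, $\nabla\tilde u^\ve_\delta\in L^2(0,T;L^p(G^\ve))$) this term vanishes identically and the $P_{c,\delta}$‑commutator $\la A^\ve k(P_{c,\delta}(\tilde u^\ve_\delta)-P_{c,\delta}(u^\ve_\delta))\nabla\tilde u^\ve_\delta,\nabla d\ra$ is treated the same way. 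Since $d=0$ on $\partial G$, the Poincar\'e inequality gives $\|d(t)\|_{H^1(G^\ve)}^2\le C\la A^\ve k_\delta(u^\ve_\delta),|\nabla d(t)|^2\ra_{G^\ve}$, and Gronwall's inequality with $d(0)=0$ yields $d\equiv0$.
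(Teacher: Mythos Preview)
Your overall strategy --- penalty formulation, Galerkin approximation, $\mu$-uniform estimates, limit $\mu\to 0$ --- matches the paper's, and your uniqueness argument is essentially the same. The difference between your continuous-time Galerkin/ODE scheme and the paper's Rothe--Galerkin discretisation is minor.

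The genuine gap is at precisely the step you flag as ``the main obstacle'': passing the quadratic terms $\la A^\ve k_\delta(u_\mu) P_{c,\delta}(u_\mu)\nabla u_\mu,\nabla u_\mu\ra$ and $\la A^\ve k_\delta(u_\mu)\partial_t\nabla u_\mu,\nabla u_\mu\ra$ to the limit. Your proposed route --- obtain strong convergence of $\nabla u^\ve_{\delta,\mu}$ via ``convergence of the energies'' --- is not worked out, and is not obviously salvageable: the energy carries the \emph{variable} weight $k_\delta(u_\mu)$, and deducing norm convergence when the weight itself is only converging in $L^q$ is delicate. The paper avoids this entirely by a specific choice of test function. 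Instead of $v-u_\mu$, it takes
\[
\zeta \;=\; v - u - k(\delta)\int_{u}^{u_\mu}\frac{d\xi}{k_\delta(\xi)},
\]
whose gradient contains the term $-\tfrac{k(\delta)}{k_\delta(u_\mu)}\nabla u_\mu$; paired against $A^\ve k_\delta(u_\mu)[\cdots]$ the factor $k_\delta(u_\mu)$ cancels, leaving the \emph{constant-coefficient} quadratic form $-k(\delta)\la A^\ve[P_{c,\delta}(u_\mu)\nabla u_\mu+\partial_t\nabla u_\mu],\nabla u_\mu\ra$, for which plain weak lower semicontinuity (after integrating the $\partial_t$ term in time) has the correct sign. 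The price is that the penalty contribution becomes $\tfrac1\mu\la\mathcal B(u_\mu-\kappa_D),\,u+k(\delta)\int_{u}^{u_\mu}k_\delta(\xi)^{-1}d\xi - v\ra$, and one must check this is nonnegative; the paper does so by a pointwise case analysis exploiting $\mathcal B=J(I-P_{\mathcal K^\ve})$, the projection inequality \eqref{projection_properties}, and the identity $k(\delta)\int_0^w k_\delta(\xi)^{-1}d\xi=w$ for $w\le 0$.

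The same $\int k_\delta^{-1}d\xi$ device is used earlier, at the Galerkin limit $N,m\to\infty$ for fixed $\mu$, to prove \emph{strong} convergence of $\nabla\bar u^N_{\mu,m}$ in $L^\infty(0,T;L^2(G^\ve))$. This is needed there to identify the weak limit of $\mathcal B(\bar u^N_{\mu,m}-\kappa_D)$: since $\mathcal B$ acts through the $H^1$-duality map $J$ of \eqref{dual_project}, Aubin--Lions compactness in $H^\sigma$, $\sigma<1$, is not enough, and your phrase ``monotonicity and (trace-)continuity of $\mathcal B$'' does not close this gap either. In short, the missing ingredient in your proposal is this single test-function trick, which the paper uses twice.
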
 
\begin{proof}
First we shall apply   the Rothe and Galerkin methods to show existence of a weak solution of  \eqref{penalty}. Then by letting $\mu \to 0$ we will obtain  the existence result for variational inequality \eqref{main2_regul}. 
The  discretisation in time of equations in  \eqref{penalty}  yields the following elliptic problem for $ u^{\ve,j}_{\delta, \mu}(x) := u^{\ve}_{\delta, \mu}(t_j,x)$, for $x\in G^\ve$, 
\begin{equation}\label{penalty_time_discrete}
\begin{aligned} 
b^\prime_\delta(u^{\ve, j}_{\delta, \mu}) \frac 1 h( u^{\ve,j}_{\delta, \mu}  -  u^{\ve, j-1}_{\delta, \mu}) - \nabla \cdot \big( A^\ve(x) k_\delta(u^{\ve, j}_{\delta, \mu}) [ P_{c, \delta}(u^{\ve, j}_{\delta, \mu}) \nabla u^{\ve, j}_{\delta, \mu} +   \frac 1h  \nabla (u^{\ve, j}_{\delta, \mu}  - u^{\ve, j-1}_{\delta, \mu}) ] \big)  \\ + 
\nabla \cdot F^\ve(t_j,x, u^{\ve, j-1}_{\delta, \mu}) 
+ \frac 1 \mu \mathcal B (u^{\ve, j}_{\delta, \mu} - \kappa_D) = 0  \quad  & \text{ in } G^\ve,  \\
\Big(A^\ve(x) k_\delta(u^{\ve, j}_{\delta, \mu})  \big[P_{c,\delta}(u^{\ve, j}_{\delta, \mu}) \nabla  u^{\ve, j}_{\delta, \mu} +  \frac 1 h \nabla (u^{\ve, j}_{\delta, \mu} - u^{\ve, j-1}_{\delta, \mu}) \big] - F^\ve(t_j,x, u^{\ve, j-1}_{\delta, \mu})\Big) \cdot \nu \\   = -  \ve f^\ve(t_j,x,u^{\ve, j-1}_{\delta, \mu})   \quad  &\text{ on }  \Gamma^\ve, \\
u^{\ve, j}_{\delta, \mu} = \kappa_D  \quad &\text{ on }  \partial G, 
\end{aligned} 
\end{equation}
where  $h =T/N$ and $t_j = jh$,  for $j=1, \ldots, N$ and  $N \in \mathbb N$,  and $u^{\ve, 0}_{\delta, \mu}(x) = u_0(x)$ for $x\in G^\ve$. Since in this proof we assume that $\delta$ and $\ve$ are fixed, for the clarity of presentation we shall omit indices $\delta$ and $\ve$ in the calculations below. 
Now applying the Galerkin method to \eqref{penalty_time_discrete},  we   consider the  orthogonal system of basis functions  $\{\psi_i\}_{i \in \mathbb N}$ of the space $V$  and are looking for functions 
 $$
 u^j_{\mu, m}(x) = \kappa_D + \sum_{i=1}^m   \alpha_{mi}^j \psi_i(x) 
 $$
in the subspace $V_m= {\rm span} \{ \psi_1, \ldots, \psi_m \}$ such that 
 \begin{equation}\label{penalty_time_discrete_2}
\begin{aligned} 
 \la  b_\delta^\prime(u^{j}_{\mu, m})  \frac 1 h ( u^j_{\mu, m} - u^{j-1}_{\mu, m}), \zeta \ra_{G^\ve} 
  + \la  A^\ve(x) k_\delta(u^{j}_{\mu, m}) [ P_{c, \delta}(u^{j}_{\mu, m}) \nabla u^j_{\mu, m} +   \frac 1h  \nabla (u^j_{\mu, m}  - u^{j-1}_{\mu, m}) ], \nabla \zeta \ra_{G^\ve}  \\ 
  -  \la F^\ve(t_j,x, u^{j-1}_{\mu, m}), \nabla \zeta \ra_{G^\ve} 
+ \frac 1 \mu \la  \mathcal B (u^{j}_{\mu, m} - \kappa_D), \zeta \ra_{V^\prime, V} = -   \la \ve f^\ve(t_j,x,u^{j-1}_{\mu, m}) ,  \zeta \ra_{\Gamma^\ve}
\end{aligned} 
\end{equation}
for all  functions $\zeta \in V_m $.  Here $u^0_{\mu, m}$, with  $u^0_{\mu, m} - \kappa_D   \in  V_m$ and $u^0_{\mu, m} \in \mathcal K^\ve$, is a finite-dimensional  approximation of $u_0$.   
Thus we have  a system of algebraic equations for unknown coefficients $\alpha = ( \alpha_{m1}^j, \ldots,  \alpha_{mm}^j)$ and 
$$
\begin{aligned} 
 J(\alpha) \alpha =  
 \big\la b_\delta^\prime(v+\kappa_D)  \frac 1 h ( v + \kappa_D - u^{j-1}_{\mu, m}),  v + \kappa_D \big\ra_{G^\ve} 
 -  \big\la b_\delta^\prime(v+\kappa_D)  \frac 1 h ( v + \kappa_D - u^{j-1}_{\mu, m}),   \kappa_D \big\ra_{G^\ve} \\
  + \big\la  A^\ve(x) k_\delta(v+\kappa_D) \big[ P_{c, \delta}(v+\kappa_D) \nabla v +   \frac 1h  \nabla (v  - u^{j-1}_{\mu, m}) \big], \nabla v\ra_{G^\ve}   
  -  \la F^\ve(t_j,x, u^{j-1}_{\mu, m}), \nabla v \big\ra_{G^\ve} \\
+ \frac 1 \mu \big\la  \mathcal B (v), v \big\ra_{V^\prime, V} +  \la \ve f^\ve(t_j,x,u^{j-1}_{\mu, m}) ,  v \ra_{\Gamma^\ve}, 
\end{aligned} 
$$
where $v = \sum_{i=1}^m   \alpha_{mi}^j \psi_i(x)$. 
Assumptions on the nonlinear functions and  monotonicity of  $\mathcal B$  ensure  
\begin{equation}\label{exist_discrit_func}
\begin{aligned} 
J(\alpha) \alpha  \geq & 
 \frac {C_1}{4h}  \| (v + \kappa_D)\chi_{\{v+ \kappa_D >0\}} \|^2_{L^2(G^\ve)} +   \frac {C_2} h  \delta^\beta  \|\nabla v \|^2_{L^2(G^\ve)} 
    - \frac {C_3} {h} \|u^{j-1}_{\mu, m}\chi_{\{v+ \kappa_D >0\}}\|^2_{L^2(G^\ve)}  \\  
    & - \frac {C_4} h \|\nabla u^{j-1}_{\mu, m} \|^2_{L^2(G^\ve)} - C_5\big[\|F^\ve(t_j,x, u^{j-1}_{\mu, m})\|^2_{L^2(G^\ve)}  +  \ve \|f^\ve(t_j,x,u^{j-1}_{\mu, m})\|^2_{L^2(\Gamma^\ve)} \big]  -  \frac{C_6} h \kappa^2_D \\
    & \geq C_7 \big[\| (v + \kappa_D)\chi_{\{v+ \kappa_D >0\}} \|^2_{L^2(G^\ve)} +     \|\nabla v \|^2_{L^2(G^\ve)}\big] - C_8 .
 \end{aligned} 
\end{equation}
 Thus  for sufficiently large $|\alpha|$ we obtain that $J(\alpha) \alpha \geq 0$ and  there exists a zero of $J(\alpha)$ and hence there is a   $u^{j}_{\mu, m} \in \kappa_D + V_m$  satisfying \eqref{penalty_time_discrete_2},  see e.g.\ \cite{Showalter_1996}.  If $b_\delta(v) = \vartheta_b v$, then we have  $\| v + \kappa_D \|^2_{L^2(G^\ve)} $ instead of $\| (v + \kappa_D)\chi_{\{v+ \kappa_D >0\}} \|^2_{L^2(G^\ve)} $. 
 
 Considering $u^j_{\mu, m} -  u^{j-1}_{\mu, m}$ as a test function in \eqref{penalty_time_discrete_2}  and summing  over $j=1, \ldots, l$,  with $1<l \leq N$, yield
 \begin{equation}\label{estim_discrete_111}
 \begin{aligned} 
&  \sum_{j=1}^l  \la  A^\ve(x) k_\delta(u^{j}_{\mu, m}) [ P_{c, \delta}(u^{j}_{\mu, m}) \nabla u^j_{\mu, m} +   \frac 1h  \nabla (u^j_{\mu, m}  - u^{j-1}_{\mu, m}) ], \nabla (u^j_{\mu, m} - u^{j-1}_{\mu, m}) \ra_{G^\ve}  \\ 
 & + \sum_{j=1}^l \frac 1 h \la b_\delta^\prime( u^j_{\mu, m})( u^j_{\mu, m} - u^{j-1}_{\mu, m}) ,  u^j_{\mu, m} -  u^{j-1}_{\mu, m}\ra_{G^\ve} 
  - \sum_{j=1}^l \la F^\ve(t_j,x, u^{j-1}_{\mu, m}), \nabla (u^j_{\mu, m} - u^{j-1}_{\mu, m} )\ra_{G^\ve}  \\
& +   \sum_{j=1}^l \frac 1 \mu \la  \mathcal B (u^{j}_{\mu, m} - \kappa_D), u^j_{\mu, m} -  u^{j-1}_{\mu, m} \ra_{V^\prime, V} 
 = -   \sum_{j=1}^l \ve  \la  f^\ve(t_j,x,u^{j-1}_{\mu, m}) ,  u^j_{\mu, m} - u^{j-1}_{\mu, m} \ra_{\Gamma^\ve}.
\end{aligned} 
 \end{equation} 
 For penalty operator $\mathcal B$   given by 
$
\mathcal B = J(I - P_{\mathcal K^\ve}), 
$
with $P_{\mathcal K^\ve}: V \to \mathcal K^\ve-\kappa_D$ being  the projection operator on $\mathcal K^\ve- \kappa_D$ and $J: V \to V^\prime$  a dual mapping, which can be chosen as 
\begin{equation}\label{dual_project}
\langle J(u), v \rangle_{V^\prime, V} = \int_{G^\ve} \big(u \, v + \nabla u \nabla v\big) dx, 
\end{equation} 
considering that $u_{\mu, m}^0 \in  \mathcal K^\ve$ and using the  property  of the projection operator 
\begin{equation}\label{projection_properties}
\langle J(u-P_{\mathcal K^\ve} u), P_{\mathcal K^\ve} u - v \rangle_{V^\prime, V}  \geq 0 \; \; \text{ for } \;  v \in \mathcal K^\ve - \kappa_D, 
\end{equation}
 we obtain  the following  estimate 
 $$
\begin{aligned} 
 \sum_{j=1}^l \big \langle \mathcal B( u^j_{\mu, m} - \kappa_D), u^j_{\mu, m} -u^{j-1}_{\mu, m} \big  \rangle_{V^\prime, V} 
 & =  \sum_{j=1}^l   \Big[ \Big \langle J(\widetilde  u^j_{\mu, m} -   P_{\mathcal K^\ve} \widetilde u^j_{\mu, m}) , (\widetilde u^j_{\mu, m} -
 P_{\mathcal K^\ve} \widetilde u^j_{\mu, m} )- (\widetilde u^{j-1}_{\mu, m}  -
 P_{\mathcal K^\ve} \widetilde u^{j-1}_{\mu, m} ) \Big \rangle_{V^\prime,V } 
\\ 
& \qquad \qquad + 
\big  \langle J(\widetilde u^j_{\mu, m}  - P_{\mathcal K^\ve} \widetilde u^j_{\mu, m} ),  
 P_{\mathcal K^\ve} \widetilde u^j_{\mu, m} - P_{\mathcal K^\ve} \widetilde u^{j-1}_{\mu, m} \big  \rangle_{V^\prime,V } \Big] 
\\
&\qquad  \geq\frac 12  \int_{G^\ve}\Big[  | (\widetilde u^l_{\mu, m}   - P_{\mathcal K^\ve} \widetilde u^l_{\mu, m})|^2  
+ | \nabla (\widetilde u^l_{\mu, m} - P_{\mathcal K^\ve} \widetilde u^l_{\mu, m})|^2\Big] dx\geq 0, 
\end{aligned} 
$$
where $\widetilde u^j_{\mu, m}= u^j_{\mu, m}-\kappa_D$. 
 Then using in \eqref{estim_discrete_111} the monotonicity of $b$, Lipschitz continuity of $F^\ve$ and $f^\ve$,  regularity of initial data,   and  the uniform boundedness from below of $k_\delta$,  ensures
 $$
 \begin{aligned} 
  \sum_{j=1}^l h \Big\|\frac{\nabla(u^j_{\mu, m} - u^{j-1}_{\mu, m})}{h} \Big\|^2_{L^2(G^\ve)} \leq   C_\sigma \sum_{j=1}^l  h \big(\|\nabla u^j_{\mu, m}\|^2_{L^2(G^\ve)}  + \|u^{j-1}_{\mu, m}\|^2_{L^2(G^\ve)}  + \ve \|u^{j-1}_{\mu, m}\|^2_{L^2(\Gamma^\ve)} \big)
\\ + \sigma_1 \sum_{j=1}^l h \Big \|\frac{\nabla(u^j_{\mu, m} - u^{j-1}_{\mu, m})}{h} \Big\|_{L^2(G^\ve)}^2 
  + \sigma_2 \ve \sum_{j=1}^l h   \Big\|\frac{u^j_{\mu, m} - u^{j-1}_{\mu, m}} {h}\Big\|_{L^2(\Gamma^\ve)}^2 
 \\ 
 \leq C_1\sum_{j=1}^l  h  \sum_{i=1}^j h\Big \|\frac{\nabla(u^i_{\mu, m} - u^{i-1}_{\mu, m})}{h}\Big \|^2_{L^2(G^\ve)}  + \sigma \sum_{j=1}^l h \Big \|\frac{\nabla(u^j_{\mu, m} - u^{j-1}_{\mu, m})}{h} \Big\|_{L^2(G^\ve)}^2 
 + C_2 .
\end{aligned}  
 $$
 In the last estimate we also used the trace and Poincar\'e inequalities. Choosing $\sigma>0$ sufficiently small and applying the discrete Gronwall inequality we obtain 
 \begin{equation}\label{apriori_discrete_11}
 \sum_{j=1}^l h \Big\|\frac{\nabla(u^j_{\mu, m} - u^{j-1}_{\mu, m})}{h} \Big\|^2_{L^2(G^\ve)} \leq  C,  
\end{equation}
 with $1< l \leq N$ and  a constant $C$ independent of $h$, $m$, and $\mu$.   Estimate \eqref{apriori_discrete_11} together with the  Poincar\'e inequality implies 
\begin{equation}\label{estim_disc_deriv} 
\sum_{j=1}^l h \Big\|\frac{u^j_{\mu, m} - u^{j-1}_{\mu, m}}{h} \Big\|^2_{L^2(G^\ve)} \leq C_1 \sum_{j=1}^l h \Big\|\frac{\nabla(u^j_{\mu, m} - u^{j-1}_{\mu, m})}{h} \Big\|^2_{L^2(G^\ve)}  \leq  C.
\end{equation}
Considering now  $u^j_{\mu, m} - \kappa_D $ as a test function in \eqref{penalty_time_discrete_2} yields 
\begin{equation}\label{estim_disct_222}
 \begin{aligned} 
&  \sum_{j=1}^l  \la  A^\ve(x) k_\delta(u^{j}_{\mu, m}) [ P_{c, \delta}(u^{{j}}_{\mu, m}) \nabla u^j_{\mu, m} +   \frac 1h  \nabla (u^j_{\mu, m}  - u^{j-1}_{\mu, m}) ], \nabla u^j_{\mu, m}  \ra_{G^\ve}  \\ 
& +  \sum_{j=1}^l \frac 1 h \la b_\delta^\prime( u^j_{\mu, m})(u^j_{\mu, m} -  u^{j-1}_{\mu, m}),  u^j_{\mu, m} -  \kappa_D\ra_{G^\ve} 
   - \sum_{j=1}^l \la F^\ve(t,x, u^{j-1}_{\mu, m}), \nabla u^j_{\mu, m}  \ra_{G^\ve} 
\\
&+   \frac 1 \mu \sum_{j=1}^l \la  \mathcal B (u^{j}_{\mu, m} - \kappa_D), u^j_{\mu, m} -  \kappa_D \ra_{V^\prime, V} 
= -   \sum_{j=1}^l \la \ve f^\ve(t,x,u^{j-1}_{\mu, m}) ,  u^j_{\mu, m} -\kappa_D \ra_{\Gamma^\ve}.
\end{aligned} 
 \end{equation}
Then assumptions on $A$,  $k$, $P_c$, $b$, $F^\ve$ and $f^\ve$, together with the  trace and Poincar\'e inequalities, monotonicity of $\mathcal B$,  and estimates \eqref{apriori_discrete_11} and \eqref{estim_disc_deriv},   ensure 
\begin{equation}\label{apriori_33}
\sum_{j=1}^l h \big[\| \nabla u^j_{\mu, m}\|^2_{L^2(G^\ve)} +  \| u^j_{\mu, m}\|^2_{L^2(G^\ve)} \big] + 
\frac 1 \mu \sum_{j=1}^l h  \la  \mathcal B (u^{j}_{\mu, m} - \kappa_D), u^j_{\mu, m} -  \kappa_D \ra_{V^\prime, V}  \leq C , 
\end{equation}
with a constant $C$ independent of $\mu$, $m$,  and $h$. 
The second term in \eqref{estim_disct_222} is estimated,  using the assumptions on  $b$ and the continuous embedding  $H^1(G^\ve)\subset L^6(G^\ve)$   for $n \leq 3$, in the following way 
$$
\begin{aligned} 
& \sum_{j=1}^l h \big |\la b_\delta^\prime( u^j_{\mu, m}) \frac 1 h(u^j_{\mu, m} -  u^{j-1}_{\mu, m}),  u^j_{\mu, m} -  \kappa_D\ra_{G^\ve} \big |
 \leq  C_1 \sum_{j=1}^l   h  \Big\|\frac{u^j_{\mu, m} - u^{j-1}_{\mu, m}}{h} \Big\|^2_{L^2(G^\ve)}
 \\ &  + 
C_2 \sum_{j=1}^l h  \big(\| u^j_{\mu, m}\|^6_{L^6(G^\ve)}  + 1 \big) \leq C_3  \sum_{j=1}^l  h  \Big\|\frac{u^j_{\mu, m} - u^{j-1}_{\mu, m}}{h} \Big\|^2_{L^2(G^\ve)} 
+ C_4  \Big[ \sum_{j=1}^l  h  \Big\|\frac{\nabla(u^j_{\mu, m} - u^{j-1}_{\mu, m})}{h} \Big\|^2_{L^2(G^\ve)} \Big]^3 + C_5 .  
\end{aligned} 
$$

To show that a subsequence of approximate solutions  $\{u_{\mu, m}^j\}$ converges to a solution of problem \eqref{penalty} we define piecewise linear and piecewise constant interpolations with respect to  the time variable 
$$
\begin{aligned} 
&u^N_{\mu, m}(t,x) := u_{\mu, m}^{j-1}(x) + (t - t_{j-1}) \frac{ u_{\mu, m}^j(x)  -  u_{\mu, m}^{j-1}(x)} h \quad  && \text{ for }  t \in (t_{j-1}, t_j], \\
&\bar u^N_{\mu, m} (t,x) : = u_{\mu, m}^j(x) \quad &&\text{ for }  t \in (t_{j-1}, t_j].
\end{aligned}
$$
Then a priori estimates in \eqref{apriori_discrete_11}, \eqref{estim_disc_deriv},  and \eqref{apriori_33} and the boundedness of  the penalty operator  $\mathcal B$ ensure 
\begin{equation}\label{a_priori_22} 
\begin{aligned} 
\|\bar u^N_{\mu, m}\|_{L^2(G^\ve_T)} +  \|\nabla \bar u^N_{\mu, m}\|_{L^2(G^\ve_T)} 
+ \|\partial_t u^N_{\mu, m}\|_{L^2(G^\ve_T)}  + \|\partial_t \nabla  u^N_{\mu, m}\|_{L^2(G^\ve_T)}  \leq C,  \\
  \int_0^T \|\mathcal B( \bar u^N_{\mu, m}  - \kappa_D)\|^2_{V^\prime}  dt \leq C, 
\end{aligned} 
\end{equation} 
with a constant $C$ independent of $N$, $m$, and $\mu$. Integrating problem \eqref{penalty_time_discrete_2} over $(0,T)$ yields
 \begin{equation}\label{discrete_2}
\begin{aligned} 
&  \la  b_\delta^\prime(\bar u^{N}_{\mu, m})  \partial_t  u^N_{\mu, m}, \zeta \ra_{G^\ve_T} 
  + \la  A^\ve(x) k_\delta(\bar u^{N}_{\mu, m}) [ P_{c, \delta}(\bar u^{N}_{\mu, m}) \nabla \bar u^N_{\mu, m} +  \partial_t  \nabla u^N_{\mu, m} ], \nabla \zeta \ra_{G^\ve_T}  \\ 
&  -  \la F^\ve(t,x, \bar u^{N,h}_{\mu, m}), \nabla \zeta \ra_{G^\ve_T} 
+ \frac 1 \mu \int_0^T \la   \mathcal B (\bar u^N_{\mu, m} - \kappa_D), \zeta \ra_{V^\prime, V} dt  = -   \la \ve f^\ve(t,x, \bar u^{N,h}_{\mu, m}) ,  \zeta \ra_{\Gamma^\ve_T}, 
\end{aligned} 
\end{equation}
 for $\zeta \in L^2(0, T; V_m)$, where $\bar u^{N,h}_{\mu, m}(t,x) = \bar u^{N}_{\mu, m}(t-h, x)$ for $t \in [h, T]$ and $\bar u^{N,h}_{\mu, m}(t,x) = u^0_{\mu,m}(x)$ for $t\in[0,h]$ and $x\in G^\ve$. 

A priori estimates \eqref{a_priori_22} imply that there exist $u_\mu \in H^1(0, T; H^1(G^\ve))$  and $\Lambda \in L^2(0,T; V^\prime)$ such that, up to a subsequence, 
\begin{equation}\label{convergence_22} 
\begin{aligned}
 \bar u^N_{\mu, m} &\rightharpoonup \,  u_\mu && \text{weakly in } L^2(0, T; H^1(G^\ve)),  \text{ strongly in } L^2(0,T; H^\sigma(G^\ve)),   \\
 u^N_{\mu, m} & \rightharpoonup \,  u_\mu && \text{weakly}-\ast \text{ in } L^\infty(0, T; H^1(G^\ve)),   \text{ strongly in } L^2(0,T;  H^\sigma(G^\ve)),   \\
 \partial_t u^N_{\mu, m}  &\rightharpoonup \,   \partial_t  u_\mu &&  \text{weakly  in } L^2(0, T; H^1(G^\ve)),  \\
  \mathcal B (\bar u^N_{\mu, m} - \kappa_D)  & \rightharpoonup \,  \Lambda &&  \text{weakly in }  L^2(0,T; V^\prime), 
\end{aligned} 
\end{equation} 
as $N, m \to \infty$,  where $1/2 < \sigma < 1$, 
and 
$$
\| \bar u^N_{\mu, m} -  \bar u^{N, h}_{\mu, m} \|_{L^2(0, T: H^1(G^\ve))} \leq \frac {C}{\sqrt{N} }. 
$$
Using a priori estimates \eqref{a_priori_22} we also obtain 
\begin{equation}\label{estim_bt}
\begin{aligned} 
\|b_\delta^\prime(\bar u^{N}_{\mu, m})  \partial_t  u^N_{\mu, m}\|_{L^2(G^\ve_T)}^2 & \leq C_1 \int_0^T \big(\|\bar u^{N}_{\mu, m}\|^4_{L^6(G^\ve)} +1+ \delta^{4(\alpha -1)} \big) \|\partial_t  u^N_{\mu, m}\|_{L^6(G^\ve)}^2 dt  \\
& \leq C_2\big(\|\nabla \bar u^{N}_{\mu, m}\|^4_{L^\infty(0,T; L^2(G^\ve))} +C_\delta \big)\|\partial_t \nabla u^N_{\mu, m}\|_{L^2(G^\ve_T)}^2 \leq C.
 \end{aligned} 
\end{equation}
Taking in \eqref{discrete_2} the  limit as $N, m \to \infty$ and using convergence results in \eqref{convergence_22}, together with the continuity of nonlinear functions,   we obtain  
 \begin{equation}\label{discrete_22}
\begin{aligned} 
& \la  b_\delta^\prime(u_\mu)  \partial_t  u_{\mu}, \zeta \ra_{G^\ve_T} 
  + \la  A^\ve(x) k_\delta(u_{\mu}) [ P_{c, \delta}(u_{\mu}) \nabla u_{\mu} +  \partial_t  \nabla u_{\mu} ], \nabla \zeta \ra_{G^\ve_T}  \\ 
  & \qquad -  \la F^\ve(t,x,  u_{\mu}), \nabla \zeta \ra_{G^\ve_T} 
+ \frac 1 \mu \int_0^T \la  \Lambda, \zeta \ra_{V^\prime, V} dt = -    \ve \la f^\ve(t,x, u_{\mu}) ,  \zeta \ra_{\Gamma^\ve_T}, 
\end{aligned} 
\end{equation}
 for $\zeta \in L^2(0,T;  V)$. 
To show strong convergence of $\bar u^N_{\mu, m}$ to $u_\mu$ in $L^2(0, T; H^1(G^\ve))$ we consider $\int_{u_\mu}^{\bar u^N_{\mu, m}} \frac 1 {k_\delta (\xi)} d\xi $ as a test function in  \eqref{discrete_2} and  obtain 
$$
\begin{aligned} 
&\big \la  A^\ve(x)  \nabla (\bar u_{\mu, m}^N - u_\mu )(s), \nabla (\bar u_{\mu, m}^N - u_\mu)(s)  \big \ra_{G^\ve}
  + \big \la A^\ve(x) P_{c, \delta}(\bar u_{\mu, m}^{N}) \nabla (\bar u_{\mu, m}^N  -u_\mu), \nabla (\bar u_{\mu, m}^N - u_\mu)  \big\ra_{G^\ve_s} 
\\ 
&  + \frac 1 \mu  \int_0^s \Big  \la  \mathcal B(\bar u_{\mu, m}^N - \kappa_D) - \mathcal B(u_\mu - \kappa_D), \int_{u_\mu}^{\bar u^N_{\mu, m}} \frac 1 {k_\delta (\xi)} d\xi  \Big \ra_{V^\prime, V} dt 
\\ 
& =  \big \la  A^\ve(x)  \nabla (u^0_{\mu, m} -  u_0 ), \nabla (u^0_{\mu, m} - u_0) \big \ra_{G^\ve}
-\big \la  A^\ve(x) \partial_t \nabla u_\mu , \nabla (\bar u_{\mu, m}^N -  u_\mu) \big \ra_{G^\ve_s}
\\ 
& -\Big\la  A^\ve(x)  [ P_{c, \delta}(\bar u_{\mu, m}^{N}) \nabla \bar u_{\mu, m}^N +  \partial_t  \nabla u_{\mu, m}^N ],  \Big[1 -\frac{ k_\delta(\bar u_{\mu, m}^N)}{ k_\delta(u_\mu)} \Big] \nabla u_\mu  \Big\ra_{G^\ve_s}  
    - \Big \la  \partial_t b_\delta(\bar u_{\mu, m}^N) , \int_{u_\mu}^{\bar u^N_{\mu, m}} \frac 1 {k_\delta (\xi)} d\xi  \Big\ra_{G^\ve_s} 
  \\ 
  &  -  \la A^\ve(x)  P_{c, \delta}(\bar u_{\mu, m}^{N}) \nabla u_\mu, \nabla \bar u_{\mu, m}^N - \nabla u_\mu  \ra_{G^\ve_s} 
  +   \Big\la F^\ve(t,x,  \bar u_{\mu, m}^{N,h}),   \frac 1 {k_\delta(\bar u_{\mu, m}^N)} \nabla \bar u^N_{\mu, m} - \frac 1 {k_\delta(u_\mu)} \nabla u_\mu \Big\ra_{G^\ve_s} 
\\ 
& - \frac 1 \mu \int_0^s \Big  \la  \mathcal B(u_\mu - \kappa_D), \int_{u_\mu}^{\bar u^N_{\mu, m}} \frac 1 {k_\delta (\xi)} d\xi   \Big \ra_{V^\prime, V} dt
-  \ve  \Big  \la f^\ve(t,x, \bar u_{\mu, m}^{N,h}) ,  \int_{u_\mu}^{\bar u^N_{\mu, m}} \frac 1 {k_\delta (\xi)} d\xi  \Big \ra_{\Gamma^\ve_s}, 
 \end{aligned} 
$$
for $s \in (0, T]$. Then using the following estimate for the penalty operator $\mathcal B$
\begin{equation}\label{positive_B}
 \int_0^s \Big\la  \mathcal B(\bar u_{\mu, m}^N - \kappa_D) - \mathcal B(u_\mu - \kappa_D), \int_{u_\mu}^{\bar u^N_{\mu, m}} \frac 1 {k_\delta (\xi)} d\xi  \Big\ra_{V^\prime, V} dt  \geq 0,  
 \end{equation}
for $s \in (0, T]$,   shown below,   the  strong convergence of $u_{\mu, m}^N$ in $L^2(G^\ve_T)$ and weak convergence  in $H^1(0,T; H^1(G^\ve))$ as $m,N \to \infty$,  together with the continuity of nonlinear functions and assumptions on $A^\ve$ and $P_{c}$, imply
 $$
 \sup_{(0,T)}\|\nabla (\bar u_{\mu, m}^N - u_\mu)\|_{L^2(G^\ve)}  \to 0 \quad \text{ as } \; m, N \to \infty. 
 $$
 To show  \eqref{positive_B} we consider  
 $$
 \begin{aligned} 
& \frac 1 {k(\delta)} \int_0^T \Big\la  \mathcal B(\bar u_{\mu, m}^N - \kappa_D) - \mathcal B(u_\mu - \kappa_D), k(\delta) \int_{u_\mu}^{\bar u^N_{\mu, m}} \frac 1 {k_\delta (\xi)} d\xi \Big \ra_{V^\prime, V} dt 
\\
&  = \frac 1 {k(\delta)}   \int_0^T \Big\la  \mathcal B(\bar u_{\mu, m}^N - \kappa_D) - \mathcal B(u _\mu- \kappa_D), (\bar u_{\mu, m}^N  - P_{\mathcal K^\ve}(\bar u_{\mu, m}^N - \kappa_D) )- (u_\mu -P_{\mathcal K^\ve}(u_\mu - \kappa_D)) \Big \ra_{V^\prime, V} dt 
 \\
& + \frac  1{k( \delta)}  \int_0^T \Big\la  \mathcal B(\bar u_{\mu, m}^N - \kappa_D) - \mathcal B(u_\mu - \kappa_D),  P_{\mathcal K^\ve}(\bar u_{\mu, m}^N - \kappa_D) -P_{\mathcal K^\ve}(u_\mu - \kappa_D) \\
& \hspace{7 cm} - (\bar u_{\mu, m}^N - u_\mu) + k(\delta) \int_{u_\mu}^{\bar u^N_{\mu, m}} \frac {d\xi} {k_\delta (\xi)}   \Big \ra_{V^\prime, V} dt, 
 \end{aligned} 
 $$
 where $k(\delta) >0$. 
 The monotonicity of $\mathcal B $ ensures 
$$
\int_0^T \Big \la  \mathcal B(\bar u_{\mu, m}^N - \kappa_D) - \mathcal B(u _\mu- \kappa_D), (\bar u_{\mu, m}^N  - P_{\mathcal K^\ve}(\bar u_{\mu, m}^N - \kappa_D) )- (u_\mu -P_{\mathcal K^\ve}(u_\mu - \kappa_D)) \Big\ra_{V^\prime, V} dt \geq 0. 
$$
For the second  term   due to the properties of the projection operator  we have
$$
 \begin{aligned} 
 \int_0^T \Big \la  \mathcal B(\bar u_{\mu, m}^N - \kappa_D),  P_{\mathcal K^\ve}(\bar u_{\mu, m}^N - \kappa_D) - \Big[P_{\mathcal K^\ve}(u_\mu - \kappa_D)  + (\bar u_{\mu, m}^N - u_\mu)  - k(\delta) \int_{u_\mu}^{\bar u^N_{\mu, m}} \frac 1 {k_\delta (\xi)} d\xi  \Big] \Big \ra_{V^\prime, V} dt \geq 0 
 \end{aligned} 
 $$
and 
$$
 \begin{aligned} 
 \int_0^T \Big \la   \mathcal B(u_\mu - \kappa_D),  P_{\mathcal K^\ve}(u_\mu - \kappa_D)  - \Big[ P_{\mathcal K^\ve}(\bar u_{\mu, m}^N - \kappa_D) +  k(\delta) \int_{u_\mu}^{\bar u^N_{\mu, m}} \frac 1 {k_\delta (\xi)} d\xi  - (\bar u_{\mu, m}^N - u_\mu) \Big]  \Big \ra_{V^\prime, V} dt \geq 0,  
 \end{aligned} 
 $$
 if   $
P_{\mathcal K^\ve}(u_\mu - \kappa_D) +(\bar u_{\mu, m}^N - u_\mu)  - k(\delta) \int_{u_\mu}^{\bar u^N_{\mu, m}}  k_\delta (\xi)^{-1} d\xi    \in \mathcal K^\ve - \kappa_D
$ and  $P_{\mathcal K^\ve}(\bar u_{\mu, m}^N - \kappa_D) + k(\delta) \int_{u_\mu}^{\bar u^N_{\mu, m}}  k_\delta (\xi)^{-1} d\xi  - (\bar u_{\mu, m}^N - u_\mu)   \in \mathcal K^\ve - \kappa_D$, respectively. 
 Notice that if $u_\mu \leq 0$ and $\bar u_{\mu,m}^N \leq 0$, then $k(\delta) \int_{u_\mu}^{\bar u^N_{\mu, m}}  {k_\delta (\xi)^{-1}} d\xi  = \bar u^N_{\mu, m}- u_\mu$.
  If 
 $\bar u_{\mu,m}^N>u_\mu$ then  $(\bar u_{\mu, m}^N - u_\mu)  - k(\delta) \int_{u_\mu}^{\bar u^N_{\mu, m}}  {k_\delta (\xi)^{-1}} d\xi \geq 0 $
 and if  $\bar u_{\mu,m}^N<u_\mu$ and $u_\mu>0$,  then  for $\bar u^N_{\mu, m} \leq 0$ we have  
 $k(\delta) \int_{0}^{\bar u^N_{\mu, m}}  k_\delta (\xi)^{-1} d\xi  = \bar u^N_{\mu, m} $ and hence 
 $\bar u_{\mu, m}^N   -k(\delta) \int_{u_\mu}^{\bar u^N_{\mu, m}}  k_\delta (\xi)^{-1} d\xi  \geq 0$ and $P_{\mathcal K^\ve}(u_\mu - \kappa_D)  = u_\mu - \kappa_D$.  Thus  combining those considerations yields   $
P_{\mathcal K^\ve}(u_\mu - \kappa_D) +(\bar u_{\mu, m}^N - u_\mu)  - k(\delta) \int_{u_\mu}^{\bar u^N_{\mu, m}}  k_\delta (\xi)^{-1} d\xi    \in \mathcal K^\ve - \kappa_D$.  
 For the second term,  if $u_\mu>u_{\mu,m}^N$ then  $ k(\delta) \int_{u_\mu}^{u^N_{\mu, m}} k_\delta (\xi)^{-1} d\xi  - (u_{\mu, m}^N - u_\mu)>0$ and if 
 $u_\mu < \bar u_{\mu, m}^N$ and $\bar u_{\mu, m}^N >0$, then since for $u_\mu <0$ we have $k(\delta) \int_{u_\mu}^0  k_\delta(\xi)^{-1} d\xi = - u_{\mu}$ and hence   $k(\delta) \int_{u_\mu}^{\bar u^N_{\mu, m}} k_\delta(\xi)^{-1} d\xi + u_{\mu} \geq 0$, we obtain 
 $P_{\mathcal K^\ve} (\bar u_{\mu, m}^N - \kappa_D) +  k(\delta) \int_{u_\mu}^{\bar u^N_{\mu, m}} k_\delta (\xi)^{-1} d\xi  - (\bar u_{\mu, m}^N - u_\mu )   \in \mathcal K^\ve - \kappa_D$.   Notice that for $\bar u_{\mu, m}^N>0$ we have  $P_{\mathcal K^\ve} (\bar u_{\mu, m}^N - \kappa_D) = \bar u_{\mu, m}^N - \kappa_D$. Thus  inequality  \eqref{positive_B} follows. 

The strong convergence of $\bar u_{\mu, m}^N$ in $L^2(0,T; H^1(G^\ve))$  implies  $\mathcal B(\overline u_{\mu, m}^N - \kappa_D) \rightharpoonup \mathcal B(u_\mu - \kappa_D)$ 
in $L^2(0,T; V^\prime)$  as $m, N \to \infty$, and hence $\Lambda = \mathcal B(u_\mu - \kappa_D)$.  Therefore we obtain  that $u_\mu$ is a weak solution of problem \eqref{penalty}.

To prove  the existence of a solution of variational inequality \eqref{main2_regul}  we need to  take  in \eqref{penalty}  the limit as $\mu \to 0$.  Notice that  a priori estimates \eqref{a_priori_22} and \eqref{estim_bt}  are uniform in $\mu$. Hence taking the limit as $N,m\to \infty$ and using lower semicontinuity of a norm we obtain the corresponding  estimates for $u_\mu$ in $H^1(0,T; H^1(G^\ve))$ and that there exists $u \in H^1(0,T; H^1(G^\ve))$ such that, up to a subsequence, $u_\mu \rightharpoonup u$ in $H^1(0,T; H^1(G^\ve))$ as $\mu \to 0$. Assumptions on $b$, $k$,  and $P_c$ and strong convergence of $u_\mu \to u$ in $L^{r_1}(G_T^\ve)$, for $1<  r_1 < 6$, ensure strong convergence  $b_\delta(u_\mu) \to b_\delta(u)$ in $L^{r_2}(G_T^\ve)$, for $1<r_2<2$,  $k_\delta(u_\mu) \to k_\delta(u)$,  $k_\delta(u_\mu) P_{c, \delta}(u_\mu) \to k_\delta(u) P_{c, \delta}(u)$ in $L^q(G_T^\ve)$, for $1< q < \infty$,  as $\mu \to 0$,  and $\partial_t b_\delta(u) \in L^2(G_T^\ve)$.  From  equation \eqref{discrete_22} follows  
$$
 \begin{aligned} 
\int_0^T \la \mathcal B( u_\mu - \kappa_D) , v \ra_{V^\prime, V} dt = \mu \int_0^T \Big[\big\la F^\ve(t,x, u_\mu) - A^\ve(x) \, k_\delta(u_\mu)\big[P_{c,\delta} (u_\mu) \nabla u_\mu + \partial_t \nabla u_\mu\big] , \nabla v \big\ra_{G^\ve} \\  - \ve \la f^\ve(t,x, u_\mu), v \ra_{\Gamma^{\ve}}  - \la \partial_t b_{\delta} (u_\mu) , v \ra_{G^\ve} \Big]dt 
 \end{aligned} 
 $$
for all $v\in L^2(0,T; V)$. Then  boundedness of  $u_\mu$ in $H^1(0, T; H^1(G^\ve))$ yields
\begin{equation}\label{converg_B}
\mathcal B( u_\mu - \kappa_D)  \rightharpoonup  0  \quad \text{weakly  in } \;  L^2(0,T; V^\prime)\; \; \;  \text{ as }  \;  \mu \to 0.  
\end{equation}
The monotonicity  of $\mathcal B$ ensures 
$$
\int_0^T\la \mathcal B(v), u_\mu - \kappa_D - v\ra_{V^\prime, V}  dt \leq \int_0^T\la \mathcal B(u_\mu - \kappa_D), u_\mu  - \kappa_D - v\ra_{V^\prime, V}  dt
$$
for   $v\in L^2(0,T; V)$.  Considering $\mu \to 0$ and using weak convergence of $u_\mu \rightharpoonup u$ in $L^2(0,T; H^1(G^\ve))$ as $\mu \to 0$,   convergence of $\mathcal B(u_\mu - \kappa_D)$, see \eqref{converg_B},  and  the fact that 
$$
\int_0^T\la \mathcal B(u_\mu - \kappa_D), u_\mu- \kappa_D\ra_{V^\prime, V}  dt \leq C \mu 
$$
imply 
$$
\int_0^T\la \mathcal B(v), u - \kappa_D - v\ra_{V^\prime, V}  dt \leq 0 . 
$$
 Taking $v= u - \kappa_D- \lambda w$ for $\lambda >0$ and  $w\in L^2(0,T; V)$, passing to the limit as $\lambda \to 0$,  and using hemicontinuity of $\mathcal B$ we obtain 
$$
\int_0^T\la \mathcal B(u - \kappa_D), w\ra_{V^\prime, V}  dt \leq 0 
$$
for all $w\in L^2(0,T; V)$ and hence $\mathcal B(u - \kappa_D) = 0 $ and  $u(t) \in \mathcal K^\ve$ for a.a.\ $t\in (0,T)$. 

To  show that $u$ is a solution of variational inequality \eqref{main2_regul}  we consider $\zeta = v - u- k(\delta) \int_{u}^{u_\mu} \frac 1{k_\delta(\xi)} d\xi$ as a test function in \eqref{discrete_22}  and obtain 
\begin{equation}\label{discrete_33}
\begin{aligned} 
 \Big\la  \partial_t b_\delta(u_\mu), v  - u- k(\delta) \int_{u}^{u_\mu} \frac {d\xi}{k_\delta(\xi)} \Big \ra_{G^\ve_T}  -  \Big \la F^\ve(t,x,  u_{\mu}), \nabla(v -  u)  - \frac {k(\delta)} {k_\delta(u_\mu)} \nabla u_\mu +  \frac{k(\delta)} {k_\delta(u)} \nabla u \Big \ra_{G^\ve_T}
 \\
  +  \Big \la  A^\ve(x) k_\delta(u_{\mu}) [ P_{c, \delta}(u_{\mu}) \nabla u_{\mu} +  \partial_t  \nabla u_{\mu} ], \nabla (v - u)  - \frac {k(\delta)} {k_\delta(u_\mu)} \nabla u_\mu +  \frac {k(\delta)} {k_\delta(u)} \nabla u \Big  \ra_{G^\ve_T}  \\    
+    \ve\Big \la f^\ve(t,x, u_{\mu}) ,  v - u  - k(\delta) \int_{u}^{u_\mu} \frac {d\xi}{k_\delta(\xi)}\Big\ra_{\Gamma^\ve_T} = 
\frac 1 \mu \int_0^T \Big \la  \mathcal B(u_\mu - \kappa_D),  u + k(\delta) \int_{u}^{u_\mu} \frac {d\xi} {k_\delta(\xi)} - v \Big\ra_{V^\prime, V} dt 
\end{aligned} 
\end{equation}
for $v\in L^2(0,T; \mathcal K^\ve)$. In order to pass to the limit as $\mu \to 0$ we need to show that 
$$
\int_0^T \Big \la   \mathcal B(u_\mu - \kappa_D), u+  k(\delta) \int_{u}^{u_\mu} \frac {d\xi} {k_\delta(\xi)}    - v  \Big\ra_{V^\prime, V} dt \geq 0 
$$
for $v \in  L^2(0,T; \mathcal K^\ve)$.   Since $\mathcal B(v - \kappa_D) = 0$ we can rewrite the left had side in the last inequality  as 
\begin{equation}\label{monot_B_22}
\begin{aligned} 
& \int_0^T \Big \la   \mathcal B(u_\mu - \kappa_D) - \mathcal B(v - \kappa_D), (u_\mu - \kappa_D)  - (v - \kappa_D) \Big\ra_{V^\prime, V} dt \\
& + \int_0^T \Big \la   \mathcal B(u_\mu - \kappa_D),  P_{\mathcal K^\ve}(u_\mu - \kappa_D) - \Big [P_{\mathcal K^\ve}(u_\mu - \kappa_D)  +  (u_\mu -  u ) -  k(\delta) \int^{u_\mu}_u \frac {d\xi}{k_\delta(\xi)}  \Big] \Big\ra_{V^\prime, V} dt .
\end{aligned} 
\end{equation}
The first term in \eqref{monot_B_22} is nonnegative due to the monotonicity of $\mathcal B$, whereas  the second term is nonnegative  if $q_\mu =  P_{\mathcal K^\ve}(u_\mu - \kappa_D) +(u_\mu - u) -  k(\delta) \int^{u_\mu}_u  k_\delta(\xi)^{-1} d\xi     \in \mathcal K^\ve - \kappa_D$. First notice that $u\in \mathcal K^\ve$ and hence $u\geq 0$ on $\Gamma^\ve$. 
If $u_\mu \geq u$  then  due to assumptions on $k$ we have  $( u_\mu - u ) -  k(\delta) \int^{u_\mu}_u  k_\delta(\xi)^{-1} d\xi   \geq 0$ and  hence $q_\mu \in \mathcal K^\ve - \kappa_D$.
If $u=u_\mu = 0$  on $\Gamma^\ve$  or if  $u =0$ and $u_\mu \leq 0$ on $\Gamma^\ve$ we obtain
$ u_\mu - k(\delta) \int_0^{u_\mu} k_\delta(\xi)^{-1} d\xi =0$ and $q_\mu = P_{\mathcal K^\ve}(u_\mu - \kappa_D)  \in \mathcal K^\ve - \kappa_D$. If $u>0$ and $u_\mu < u$ on $\Gamma^\ve$, then, since $u_\mu \to u$ in $L^2(\Gamma^\ve_T)$ as $\mu \to 0$,  there exists such $\mu>0$ that $0< u_\mu  \leq u$  and $|u - u_\mu| \leq u_\mu$ a.e.\ on  $(0,T)\times\Gamma^\ve$,  and thus
$q_\mu \geq u_\mu - \kappa_D - (u- u_\mu) \geq - \kappa_D$ and  $q_\mu \in \mathcal K^\ve - \kappa_D$.   

Considering the limit as $\mu \to 0$ in \eqref{discrete_33} and integration by parts in   
$ \la  A^\ve(x)  \partial_t  \nabla u_{\mu} , \nabla u_\mu   \ra_{G^\ve_T} $, combined with   strong convergence of $u_\mu$ in $L^p((0,T)\times G^\ve)$ for any $1<p<6$, positivity of functions $k_\delta$ and $P_{c,\delta}$,  continuity of nonlinear functions and lower semicontinuity of a norm,   yield 
\begin{equation}\label{discrete_44}
\begin{aligned} 
 \big\la  \partial_t b_\delta(u), v  - u \big \ra_{G^\ve_T} 
  +   \big\la  A^\ve(x) k_\delta(u) [ P_{c, \delta}(u) \nabla u +  \partial_t  \nabla u ], \nabla (v  -u)  \big\ra_{G^\ve_T}  
  -  \big \la F^\ve(t,x,  u), \nabla (v -   u)  \big \ra_{G^\ve_T}   \\ +    \ve\big \la f^\ve(t,x, u) ,  v - u \big\ra_{\Gamma^\ve_T} \geq 0. 
\end{aligned} 
\end{equation}
Thus we obtain  that  $u_\delta^\ve= u$ is a solution of variational inequality  \eqref{main2_regul}. 

To show the uniqueness of a solution of variational inequality \eqref{main2_regul}  we assume that there are two solutions $u^\ve_{\delta, 1}$ and $u^\ve_{\delta, 2}$  and  consider $v = u_{\delta, 2}^\ve$ and $v=u_{\delta, 1}^\ve$ as test functions in variational inequalities for $u^\ve_{\delta, 1}$ and $u^\ve_{\delta, 2}$, respectively,
\begin{equation}\label{Unique_1}
\begin{aligned}
\big \langle A^\ve(x) \big(k_\delta(u^\ve_{\delta, 1}) [ P_{c,\delta}(u^\ve_{\delta, 1}) \nabla u_{\delta, 1}^\ve +  \partial_t \nabla u_{\delta, 1}^\ve] -  k_\delta(u^\ve_{\delta, 2}) [ P_{c, \delta}(u^\ve_{2, \delta}) \nabla u_{2, \delta}^\ve +  \partial_t \nabla u_{2, \delta}^\ve]\big), \nabla (u_{\delta, 1}^\ve-u_{\delta, 2}^\ve)\big \rangle_{G^\ve_\tau}  \\  
 +  \big \langle \partial_t (b_\delta(u^\ve_{\delta, 1})- b_\delta(u^\ve_{\delta, 2})),   u_{\delta, 1}^\ve - u_{\delta, 2}^\ve \big \rangle_{G^\ve_\tau}  
 - \big \langle   F^\ve(t,x, u^\ve_{\delta, 1}) -  F^\ve(t,x, u^\ve_{\delta, 2}),  \nabla (u^\ve_{\delta, 1} - u^\ve_{\delta, 2}) \big \rangle_{G^\ve_\tau}  \\ + 
\ve \big\langle  f^\ve(t,x,u^\ve_{\delta, 1}) -  f^\ve(t,x,u^\ve_{\delta, 2}),  u^\ve_{\delta, 1} - u^\ve_{\delta, 2}  \big\rangle_{\Gamma^\ve_\tau}  \leq 0, 
 \end{aligned}
\end{equation}
for $\tau \in (0,T]$. Rearranging  terms in  \eqref{Unique_1}   implies  
\begin{eqnarray}\label{Unique_2}
&& \frac 12 \int_{G^\ve_\tau}  A^\ve(x) \partial_t  \big(k_\delta(u^\ve_{\delta, 1})|\nabla u^\ve_{\delta, 1}- \nabla u^\ve_{\delta,2}|^2\big) dx dt  
- \frac 12  \int_{G^\ve_\tau}  A^\ve(x) \partial_t k_\delta(u^\ve_{\delta, 1})|\nabla u^\ve_{\delta, 1}- \nabla u^\ve_{\delta, 2}|^2 dx dt  \nonumber \\
&& + \big \langle \partial_t (b_\delta(u^\ve_{\delta, 1})- b_\delta(u^\ve_{\delta, 2})),  u^\ve_{\delta, 1} - u^\ve_{\delta,  2}  \big \rangle_{G^\ve_\tau}  + \left \langle A^\ve(x) (k_\delta(u^\ve_{\delta, 1}) - k_\delta(u_{\delta, 2}^\ve))\partial_t \nabla u^\ve_{\delta, 2}, \nabla (u^\ve_{\delta, 1}-  u^\ve_{\delta, 2})\right \rangle_{G^\ve_\tau}
\nonumber \\
&& + \left \langle A^\ve(x) k_\delta(u^\ve_{\delta, 1}) P_{c, \delta} (u^\ve_{\delta, 1})\nabla (u_{\delta, 1}^\ve  -  u_{\delta, 2}^\ve),  \nabla (u^\ve_{\delta, 1}-u^\ve_{\delta, 2})\right \rangle_{G^\ve_\tau} 
\\
&& +  \big \langle A^\ve(x) (k_\delta(u^\ve_{\delta, 1}) P_{c, \delta}(u^\ve_{\delta, 1}) - k(u^\ve_{\delta, 2}) P_{c, \delta}(u^\ve_{\delta, 2})) \nabla u_{\delta, 2}^\ve,  \nabla (u^\ve_{\delta, 1}-u^\ve_{\delta, 2})\big \rangle_{G^\ve_\tau} 
\nonumber \\
 && -\big \langle   F^\ve(t,x, u^\ve_{\delta, 1}) -  F^\ve(t,x, u^\ve_{\delta, 2}),  \nabla (u^\ve_{\delta, 1} - u^\ve_{\delta, 2})\big \rangle_{G^\ve_\tau}   + 
\ve \big\langle  f^\ve(t,x,u^\ve_{\delta, 1}) -  f^\ve(t,x,u^\ve_{\delta, 2}),  u^\ve_{\delta, 1} - u_{\delta, 2}^\ve\big \rangle_{\Gamma^\ve_\tau}  \leq 0,  \nonumber
\end{eqnarray}
for $\tau \in (0,T]$.  Using regularity assumptions  on $\partial_t u^\ve_{\delta, 1}$, the Lipschitz continuity of  $k$ and boundedness of  $A^\ve$, the second term in \eqref{Unique_2}  can be estimated as
$$
\Big| \int_{G^\ve_\tau}  A^\ve(x) \partial_t k_\delta(u^\ve_{\delta, 1})|\nabla u^\ve_{\delta, 1}- \nabla u^\ve_{\delta, 2}|^2 dx dt \Big| \leq  C \sup_{(0,\tau)}  \|\nabla u^\ve_{\delta, 1}- \nabla u^\ve_{\delta, 2}\|^2_{L^2(G^\ve)} 
 \tau^{\frac 1 2} \|\partial_t u^\ve_{\delta, 1}\|_{L^2(0, T; L^\infty(G^\ve))}.
 $$
 The third term in \eqref{Unique_2}  is estimated  as
 \begin{equation}\label{estim_b_22}
\begin{aligned}
 \langle \partial_t (b_\delta(u^\ve_{\delta, 1})- b_\delta(u^\ve_{\delta, 2})),  u^\ve_{\delta, 1} - u^\ve_{\delta, 2} \rangle_{G^\ve_\tau}    = 
  \big\langle  b^\prime_\delta(u^\ve_{\delta, 1})\partial_t (u_{\delta, 1}^\ve - u^\ve_{\delta, 2}) , u^\ve_{\delta, 1} - u^\ve_{\delta, 2} \big\rangle_{G^\ve_\tau } \\
  + \big\langle  (b^\prime_\delta(u^\ve_{\delta, 1})- b^\prime_\delta(u^\ve_{\delta, 2}))\,  \partial_t u_{\delta, 2}^\ve,  u^\ve_{\delta, 1} - u^\ve_{\delta, 2} \big \rangle_{G^\ve_\tau} \\
  \geq \frac 12 \delta  \|u^\ve_{\delta, 1}(\tau) - u^\ve_{\delta, 2}(\tau)\|^2_{L^2(G^\ve)}  - C_1\tau^{\frac 1 2} \sup_{(0,\tau)} \|\nabla(u^\ve_{\delta, 1} - u^\ve_{\delta, 2}) \|^2_{L^2(G^\ve)} . 
 \end{aligned}
\end{equation}
 Here we used  the fact that the  continuous embedding $H^1(G^\ve)\subset L^6(G^\ve)$   for $n\leq 3$ and regularity $\partial_t u_{\delta, j}^\ve \in L^2(0,T; L^6(G^\ve))$ and  $u_{\delta, j}^\ve \in L^\infty(0,T; L^6(G^\ve))$, with $j=1,2$,  together with assumptions on $b$, ensure
\begin{equation}\label{estim_b_t}
\begin{aligned}
& \int_{G^\ve_\tau}  |\partial_t b^{\prime}_\delta(u^\ve_{\delta, 1})||u^\ve_{\delta, 1} - u^\ve_{\delta, 2}|^2 dx dt +  \int_{G^\ve_\tau} |b^{\prime}_\delta(u^\ve_{\delta, 1}) - b^{\prime}_\delta(u^\ve_{\delta, 2}) | |  \partial_t u^\ve_{\delta, 2}| |u^\ve_{\delta, 1} - u^\ve_{\delta, 2}| dx dt
 \\ 
 & \leq  C_1 \Big(\int_{G^\ve_\tau}  \big[ |b^{\prime \prime}_\delta(u^\ve_{\delta, 1})| +  |b^{\prime \prime}_\delta(u^\ve_{\delta, 2})|\big]^{\frac 32 }
 \big[ |\partial_t u_{\delta, 1}^\ve|+ |\partial_t u_{\delta, 2}^\ve|\big]^{\frac 32} dx dt \Big)^{\frac 2 3}\Big(\int_{G^\ve_\tau} |u^\ve_{\delta, 1} - u^\ve_{\delta, 2}|^6 dx dt\Big)^{\frac 1 3}\\
 & \leq C_2 \tau ^{\frac 12 } \big(\|\nabla u^\ve_{\delta, 1}\|_{L^\infty(0,\tau; L^2(G^\ve))} + \|\nabla u^\ve_{\delta, 2}\|_{L^\infty(0,\tau; L^2(G^\ve))} +1\big)\, \big(\|\partial_t u^\ve_{\delta, 1}\|_{L^2(G^\ve_\tau)}   \\
 & \hspace{8 cm } + \|\partial_t u^\ve_{\delta, 2}\|_{L^2(G^\ve_\tau)} \big)  \sup_{(0,\tau)}   \|\nabla(u^\ve_{\delta, 1} - u^\ve_{\delta, 2})\|^2_{L^2(G^\ve)}. 
\end{aligned}
\end{equation}
Notice that $u^\ve_{\delta, j}$, with $j=1,2$, satisfies Dirichlet boundary condition and Poincar\'e inequality can be applied. 
Lipschitz continuity of $k$ and regularity assumptions on $\partial_t u^\ve_{\delta,2}$ ensure 
 $$
 \begin{aligned} 
\big | \left \langle A^\ve(x) (k_\delta(u^\ve_{\delta, 1}) - k_\delta(u_{\delta, 2}^\ve))\, \partial_t \nabla u^\ve_{\delta, 2}, \nabla (u^\ve_{\delta, 1}-  u^\ve_{\delta, 2})\right \rangle_{G^\ve_\tau} \big| \leq  C_{1,\tau} \|\nabla(u_{\delta, 1}^\ve - u_{\delta, 2}^\ve)\|^2_{L^2(G^\ve_\tau)} \\ + C_2\tau^{\frac 1 2} \|\partial_t \nabla u_{\delta, 2}^\ve\|^2_{L^2(0, \tau; L^{p}(G^\ve))} \sup_{(0, \tau)} \|u_{\delta, 1}^\ve - u_{\delta, 2}^\ve\|^2_{L^{\frac{2 p}{p - 2}}(G^\ve)}
\\ \leq C_\tau \|\nabla(u_{\delta, 1}^\ve - u_{\delta, 2}^\ve)\|^2_{L^2(G^\ve_\tau)} + \tau^{\frac 1 2} \sup_{(0, \tau)} \|\nabla(u_{\delta, 1}^\ve - u_{\delta, 2}^\ve)\|^2_{L^2(G^\ve)}, 
 \end{aligned} 
 $$
for $p\geq n$.   Using assumptions on $k$ and $P_c$ we also obtain 
 $$
 \begin{aligned} 
& \big |\left \langle A^\ve(x) (k_\delta(u^\ve_{\delta, 1}) P_{c, \delta}(u^\ve_{\delta, 1}) - k(u^\ve_{\delta, 2}) P_{c, \delta}(u^\ve_{\delta, 2})) \nabla u_{\delta, 2}^\ve,  \nabla (u^\ve_{\delta, 1}-u^\ve_{\delta, 2})\right \rangle_{G^\ve_\tau} \big|\\
 & \qquad  \leq  C \|\nabla u_{\delta, 2}^\ve \|^2_{L^\infty(0,\tau;  L^p(G^\ve))} \|\nabla(u_{\delta, 1}^\ve - u_{\delta, 2}^\ve)\|^2_{L^2(G_\tau^\ve)} + \|\nabla(u_{\delta, 1}^\ve - u_{\delta, 2}^\ve)\|^2_{L^2(G_\tau^\ve)}, 
  \end{aligned} 
 $$
for $p \geq n$.  The last two terms  in \eqref{Unique_2} are estimated using Lipschitz continuity of $F^\ve$ and $f^\ve$  and  the trace estimate. 
Then integrating by parts in the first term in \eqref{Unique_2}, using the fact that $k_\delta(u^\ve_{\delta, 1}) \geq \delta >0$,   choosing a sufficiently small $\tau>0$ and applying the Gronwall inequality we obtain 
$$
\sup_{(0,\tau)} \|\nabla(u^\ve_{\delta, 1} - u^\ve_{\delta, 2})\|^2_{L^2(G^\ve)} \leq 0 .
$$
Using the Poincar\'e inequality and iterating over $\tau>0$, which depends on the coefficients in the variational inequality and is independent of a solution of \eqref{main2_regul},    yield $u_{\delta, 1}^\ve(t,x) = u_{\delta, 2}^\ve(t,x)$ a.e. in $(0,T)\times G^\ve$,  and hence the uniqueness of a solution of variational inequality  \eqref{main2_regul}. 
\\
If $k(\xi) = {\rm const}$,  for two solutions $u_{\delta, 1}^\ve$ and  $u_{\delta, 2}^\ve$ of  \eqref{main2_regul} we have  
\begin{equation}\label{Unique_22}
\begin{aligned}
&\frac 12  \int_{G^\ve}  A^\ve(x)  |\nabla (u^\ve_{\delta, 1} -  u^\ve_{\delta, 2})(\tau)|^2  dx    
+ \int_{G^\ve_\tau} A^\ve(x)  P_{c, \delta} (u^\ve_{\delta,1})|\nabla (u_{\delta, 1}^\ve  -  u_{\delta, 2}^\ve)|^2 dx dt
\\
&+\frac 12  \int_{G^\ve} b^\prime_\delta(u^\ve_{\delta, 1})|u^\ve_{\delta, 1}(\tau) - u^\ve_{\delta, 2}(\tau)|^2 dx   +  \int_{G^\ve_\tau} (b^{\prime}_\delta(u^\ve_{\delta, 1}) - b^{\prime}_\delta(u^\ve_{\delta, 2}) )  \partial_t u^\ve_{\delta, 2} (u^\ve_{\delta, 1} - u^\ve_{\delta, 2}) dx dt
\\ 
& - \frac 12 \int_{G^\ve_\tau}  \partial_t b^{\prime}_\delta(u^\ve_{\delta, 1})|u^\ve_{\delta, 1} - u^\ve_{\delta, 2}|^2 dx dt
+ \left \langle A^\ve(x) (P_{c, \delta}(u^\ve_{\delta, 1}) -  P_{c, \delta}(u^\ve_{\delta, 2})) \nabla u_{\delta, 2}^\ve,  \nabla (u^\ve_{\delta, 1}-u^\ve_{\delta, 2})  \right \rangle_{G^\ve_\tau} 
\\
& \leq 
  \langle   F^\ve(t,x, u^\ve_{\delta, 1}) -  F^\ve(t,x, u^\ve_{\delta, 2}),  \nabla    (u^\ve_{\delta, 1} - u^\ve_{\delta, 2})  \rangle_{G^\ve_\tau} -
\ve \langle  f^\ve(t,x,u^\ve_{\delta, 1}) -  f^\ve(t,x,u^\ve_{\delta, 2}),    u^\ve_{\delta, 1} - u_{\delta, 2}^\ve  \rangle_{\Gamma^\ve_\tau}  .
 \end{aligned}
\end{equation}
The   fourth  and fifth terms  on the left-hand side in \eqref{Unique_22} are estimates as in  \eqref{estim_b_t}.  For  the sixth  term on the left-hand side, using Lipschitz continuity of $P_{c, \delta}$ and  regularity assumption on $u_{\delta, 2}^\ve$  we have 
$$
\begin{aligned}
\left \langle A^\ve(x) (P_{c, \delta}(u^\ve_{\delta, 1}) -  P_{c, \delta}(u^\ve_{\delta, 2})) \nabla u_{\delta, 2}^\ve,  \nabla (u^\ve_{\delta, 1}-u^\ve_{\delta, 2})  \right \rangle_{G^\ve_\tau}  \leq C_1 \tau^{\frac 1 2} \sup_{(0,\tau)}\|u^\ve_{\delta, 1} - u^\ve_{\delta, 2} \|^2_{L^{\frac{2p}{p-2}}(G^\ve)}\|\nabla u_{\delta, 2}^\ve\|^2_{L^2(0,\tau; L^p(G^\ve))}  \\ +  
C_{2, \tau}\| \nabla (u^\ve_{\delta, 1}-u^\ve_{\delta, 2})\|^2_{L^2(G^\ve_\tau)}  \leq \tau^{\frac 1 2} \sup_{(0,\tau)}\|\nabla(u^\ve_{\delta, 1} - u^\ve_{\delta, 2}) \|^2_{L^2(G^\ve)} + C_{\tau}\| \nabla (u^\ve_{\delta, 1}-u^\ve_{\delta, 2})\|^2_{L^2(G^\ve_\tau)}  .
 \end{aligned}
 $$
 Lipschitz continuity of $F^\ve$ and $f^\ve$ ensures the corresponding estimates for the terms on the right-hands side of  \eqref{Unique_22}. 
 Combining those estimates, applying Gronwall inequality,  and iterating over $\tau>0$  yield   uniqueness of a solution of  variational inequality  \eqref{main2_regul} if $k ={\rm const}$.  Notice that if both $k$ and $P_c$ are constant the uniqueness result is obtain without  additional regularity assumptions on solutions of  variational inequality~\eqref{main2_regul}.
\end{proof} 
{\bf Remark.}  By extending the $L^p$-theory for parabolic equations to pseudoparabolic equations and variational inequalities  it may be possible to prove higher regularity for  solutions of variational inequality  \eqref{main2_regul}. However this nontrivial analysis will not be considered here and will be the topic of further research. \\

To prove existence of a solution of the original problem \eqref{main2} and to derive macroscopic variational inequality we first derive a priori estimates for solutions of regularised problem   \eqref{main2_regul}  uniformly in $\delta$ and $\ve$. 
\begin{lemma} \label{lem:apriori}
Under Assumption~\ref{assum_1}  and  if  $\beta\geq \lambda > 4+ \alpha$ for $n=3$ and  $\beta\geq \lambda > 3+\alpha + 4/(q-2)$ for $n=2$ and  any $q>2$,  solutions of variational inequality \eqref{main2_regul} are non-negative and  satisfy the following a priori estimates 
\begin{equation}\label{a_priori_1}
\begin{aligned}
&\| (u_\delta^\ve+\delta)^{1+\alpha-\beta}\|_{L^\infty(0,T; L^{1}(G^\ve))} + \|\sqrt{P_{c, \delta}(u^\ve_\delta)} \nabla u^\ve_\delta\|_{L^2((0,T)\times G^\ve)}      \leq C,  \\
 &\|\nabla u^\ve_\delta\|_{L^\infty(0,T; L^2(G^\ve))}+ \|b_\delta(u^\ve_\delta)\|_{L^\infty(0,T; L^2(G^\ve))}  \leq C, \\
  &\|\sqrt{k_\delta (u^\ve_\delta)}\partial_t\nabla u^\ve_\delta\|_{L^2((0,T) \times G^\ve)} +  \|\sqrt{b^\prime_\delta (u^\ve_\delta)}\partial_t u^\ve_\delta\|_{L^2((0,T)\times G^\ve)}   \leq C, \\
  &  \|\partial_t b_\delta(u^\ve_\delta)\|_{L^2(0,T; L^r(G^\ve))}  + \|\nabla \partial_t u^\ve_\delta\|_{L^p((0,T)\times G^\ve)}  \leq C,
 \end{aligned}
\end{equation}
for $1<p<2$ defined in \eqref{eq:p},   $1<r<3/2$ for $n=3$ and $1<r<4/3$ for $n=2$,  and the constant $C>0$  is independent of $\ve$ and $\delta$. 
\end{lemma}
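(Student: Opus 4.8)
The plan is as follows.

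\emph{Step 0 (non-negativity).} First I would test \eqref{main2_regul} with $v=(u^\ve_\delta)^{+}$, which lies in $\mathcal K^\ve$ because $(u^\ve_\delta)^{+}=\kappa_D>0$ on $\partial G$ and $(u^\ve_\delta)^{+}\ge 0$ on $\Gamma^\ve$. Then $v-u^\ve_\delta=(u^\ve_\delta)^{-}$ is supported where $u^\ve_\delta<0$, and there the regularised coefficients $b_\delta,k_\delta,P_{c,\delta}$ are constant, while $k(u^\ve_\delta)=0$ and $H(u^\ve_\delta)=H(0)$. Consequently $\langle\partial_t b_\delta(u^\ve_\delta),(u^\ve_\delta)^{-}\rangle=0$, the $F^\ve$-term vanishes after integrating by parts and using $\nabla\cdot Q^\ve=0$, $Q^\ve\cdot\nu=0$ on $\Gamma^\ve$, the $f^\ve$-term is nonpositive since $\xi f_1(\xi)\ge 0$ and $f_0\ge 0$, and the remaining pseudoparabolic and capillary terms reduce, after integration in time, to $\tfrac12 k(\delta)\int_{G^\ve}A|\nabla(u^\ve_\delta)^{-}(t)|^2\,dx+k(\delta)P_c(\delta)\int_0^t\!\!\int_{G^\ve}A|\nabla(u^\ve_\delta)^{-}|^2\,dx\,ds\le 0$; since $A\ge a_0>0$ and $(u^\ve_\delta)^{-}=0$ on $\partial G$, this forces $(u^\ve_\delta)^{-}\equiv 0$.

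\emph{Step 1 (first block).} With $u^\ve_\delta\ge 0$ at hand, I would test \eqref{main2_regul} with $v=u^\ve_\delta-\lambda\,\Psi_\delta(u^\ve_\delta)$, $\Psi_\delta(s):=\int_{\kappa_D}^{s}k_\delta(\eta)^{-1}d\eta$, with $\lambda>0$ small and fixed so that $v\in\mathcal K^\ve$ (an issue only on $\Gamma^\ve\cap\{u^\ve_\delta>\kappa_D\}$, where $0\le\Psi_\delta(u^\ve_\delta)\le(u^\ve_\delta-\kappa_D)/k(\kappa_D)$, so any $\lambda<k(\kappa_D)$ works, uniformly in $\delta$). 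After dividing by $\lambda$, the degenerate factor $k_\delta(u^\ve_\delta)$ in front of $\partial_t\nabla u^\ve_\delta$ cancels, leaving $\langle A\,\partial_t\nabla u^\ve_\delta,\nabla u^\ve_\delta\rangle=\tfrac12\tfrac{d}{dt}\int_{G^\ve}A|\nabla u^\ve_\delta|^2$ and $\langle A\,P_{c,\delta}(u^\ve_\delta)\nabla u^\ve_\delta,\nabla u^\ve_\delta\rangle\ge a_0\|\sqrt{P_{c,\delta}(u^\ve_\delta)}\,\nabla u^\ve_\delta\|_{L^2(G^\ve)}^2$, whereas $\langle\partial_t b_\delta(u^\ve_\delta),\Psi_\delta(u^\ve_\delta)\rangle=\tfrac{d}{dt}\int_{G^\ve}\Phi_\delta(u^\ve_\delta)$ with $\Phi_\delta'=b_\delta'\Psi_\delta\ge 0$ and $\Phi_\delta(s)\ge c\,(s+\delta)^{1+\alpha-\beta}-C$. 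The initial term $\int_{G^\ve}\Phi_\delta(u_0)$ is bounded uniformly in $\delta$ by the integrability hypothesis on $\int_{\kappa_D}^{u_0}b'(\xi)\int_{\kappa_D}^{\xi}k(z)^{-1}dz\,d\xi$ in Assumption~\ref{assum_1}, which in particular forces $u_0>0$ a.e.\ and controls $\int_G u_0^{1+\alpha-\beta}\,dx$, so that the contribution of $\{u_0\le\delta\}$ is handled by a Chebyshev estimate. The $F^\ve$-term splits: the $Q^\ve H(u^\ve_\delta)$-part vanishes after integration by parts (using $\nabla\cdot Q^\ve=0$, $Q^\ve\cdot\nu=0$ on $\Gamma^\ve$, $\Psi_\delta(\kappa_D)=0$, and a primitive of $H/k_\delta$); the $k(u^\ve_\delta)g$-part is estimated directly via $k(u^\ve_\delta)/k_\delta(u^\ve_\delta)\le 1$; the $f^\ve$-boundary term via $|f_1(\xi)\Psi_\delta(\xi)|\le C$ (the structural bound in Assumption~\ref{assum_1}, since $k_\delta\ge k$) together with $\ve\,|\Gamma^\ve|\le C$. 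Gronwall's inequality then gives $\|\nabla u^\ve_\delta\|_{L^\infty(0,T;L^2(G^\ve))}+\|\sqrt{P_{c,\delta}(u^\ve_\delta)}\,\nabla u^\ve_\delta\|_{L^2((0,T)\times G^\ve)}+\|(u^\ve_\delta+\delta)^{1+\alpha-\beta}\|_{L^\infty(0,T;L^1(G^\ve))}\le C$, and, since $|b(z)|\le C(1+z^3)$ and $\alpha\le 3$, the embedding $H^1(G^\ve)\hookrightarrow L^6(G^\ve)$ yields $\|b_\delta(u^\ve_\delta)\|_{L^\infty(0,T;L^2(G^\ve))}\le C$; all constants are $\ve$-independent thanks to the Dirichlet condition on $\partial G$ (Poincar\'e, trace and Sobolev constants for the perforated domain are $\ve$-uniform).

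\emph{Step 2 (time derivatives).} Next I would use the admissible competitor $v(t)=u^\ve_\delta(t-h)$ for $t>h$, $v(t)=u_0$ for $t\le h$; subtracting it from $u^\ve_\delta$ in the time-integrated inequality, dividing by $-h$ and letting $h\to 0^{+}$ (legitimate because $\partial_t u^\ve_\delta\in L^2(0,T;H^1(G^\ve))$ and $\partial_t b_\delta(u^\ve_\delta)\in L^2(G^\ve_T)$ by Lemma~\ref{Lemma_existence_regular}) produces $\int_0^T\!\big[\langle\partial_t b_\delta(u^\ve_\delta),\partial_t u^\ve_\delta\rangle+\langle A\,k_\delta(u^\ve_\delta)[P_{c,\delta}(u^\ve_\delta)\nabla u^\ve_\delta+\partial_t\nabla u^\ve_\delta],\partial_t\nabla u^\ve_\delta\rangle-\langle F^\ve,\partial_t\nabla u^\ve_\delta\rangle+\langle\ve f^\ve,\partial_t u^\ve_\delta\rangle\big]dt\le 0$. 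The two nonnegative quadratic terms $\int_0^T\!\!\int_{G^\ve}b_\delta'(u^\ve_\delta)|\partial_t u^\ve_\delta|^2$ and $\int_0^T\!\!\int_{G^\ve}A\,k_\delta(u^\ve_\delta)|\partial_t\nabla u^\ve_\delta|^2$ are then absorbed from the right-hand side: the capillary cross-term via $k_\delta P_{c,\delta}^{2}\le C P_{c,\delta}$ and Step~1; the $k(u^\ve_\delta)g$-part of $F^\ve$ via $k(u^\ve_\delta)^2/k_\delta(u^\ve_\delta)\le k(u^\ve_\delta)\le\|k\|_{L^\infty}$; the $Q^\ve H(u^\ve_\delta)$-part after integration by parts in $x$, using $|H'(z)|\le C\sqrt{b'(z)}$ (Assumption~\ref{assum_1}) and Young's inequality; and the surface term $\ve\langle f^\ve,\partial_t u^\ve_\delta\rangle$ by integration by parts in $t$, writing $f_1(u^\ve_\delta)\partial_t u^\ve_\delta=\partial_t F_1(u^\ve_\delta)$ with $F_1$ bounded and $f_0\in C^1$. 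This gives $\|\sqrt{k_\delta(u^\ve_\delta)}\,\partial_t\nabla u^\ve_\delta\|_{L^2((0,T)\times G^\ve)}+\|\sqrt{b_\delta'(u^\ve_\delta)}\,\partial_t u^\ve_\delta\|_{L^2((0,T)\times G^\ve)}\le C$.

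\emph{Step 3 (the $L^p$- and $L^r$-bounds; the main obstacle).} The capillary estimate of Step~1 means $\nabla\big[(u^\ve_\delta+\delta)^{1-\lambda/2}\big]\in L^2((0,T)\times G^\ve)$, and, combined with the weighted $L^1$-bound (which, via the relation between $\beta$, $\lambda$ and $\alpha$, also controls $(u^\ve_\delta+\delta)^{2-\lambda}$ in $L^\infty(0,T;L^1(G^\ve))$), it yields $(u^\ve_\delta+\delta)^{1-\lambda/2}\in L^2(0,T;H^1(G^\ve))\cap L^\infty(0,T;L^2(G^\ve))$ with $\ve$-uniform constants; parabolic (Gagliardo--Nirenberg) interpolation then shows that a suitable negative power $(u^\ve_\delta+\delta)^{-\sigma}$ lies in $L^1((0,T)\times G^\ve)$ with a $\delta$-uniform bound, for $\sigma$ in a range $(0,\sigma_\ast]$ determined by $\lambda,\beta,\alpha$ and $n$. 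Using this together with $\sqrt{k_\delta(u^\ve_\delta)}\,\partial_t\nabla u^\ve_\delta\in L^2$, writing $\nabla\partial_t u^\ve_\delta=k_\delta(u^\ve_\delta)^{-1/2}\big[\sqrt{k_\delta(u^\ve_\delta)}\,\partial_t\nabla u^\ve_\delta\big]$ and $k_\delta(u^\ve_\delta)^{-1}\lesssim(u^\ve_\delta+\delta)^{-\beta}$ up to bounded terms, H\"older's inequality gives $\|\nabla\partial_t u^\ve_\delta\|_{L^p((0,T)\times G^\ve)}\le C$ for the exponent $p$ of \eqref{eq:p}; the conditions $\beta\ge\lambda>4+\alpha$ for $n=3$, and $\beta\ge\lambda>3+\alpha+4/(q-2)$ for $n=2$, are exactly what make it possible to choose $p>1$. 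Likewise, $\partial_t b_\delta(u^\ve_\delta)=\sqrt{b_\delta'(u^\ve_\delta)}\,\big[\sqrt{b_\delta'(u^\ve_\delta)}\,\partial_t u^\ve_\delta\big]$, with $\sqrt{b_\delta'(u^\ve_\delta)}$ controlled by the $L^6$-bound on $u^\ve_\delta$ if $\alpha\ge 1$ and by the above negative-power estimate if $\alpha<1$, yields $\|\partial_t b_\delta(u^\ve_\delta)\|_{L^2(0,T;L^r(G^\ve))}\le C$ for $r$ in the stated range. I expect this last step to be the main obstacle: one must balance the negative-power integrability coming from the capillary and weighted $L^1$-estimates against the degeneracy $k_\delta(u^\ve_\delta)^{-1}\sim(u^\ve_\delta+\delta)^{-\beta}$ so that $p$ (and $r$) remain strictly above $1$ with constants independent of both $\ve$ and $\delta$; this exponent bookkeeping is precisely what dictates the hypotheses $\beta\ge\lambda>4+\alpha$ and their two-dimensional counterpart.
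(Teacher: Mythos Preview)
Your proposal is essentially correct and follows the same overall strategy as the paper: the same test function $u^\ve_\delta - \lambda\int_{\kappa_D}^{u^\ve_\delta} k_\delta^{-1}$ in Step~1, and the same H\"older/Sobolev bookkeeping for the negative-power integrability in Step~3. Two points are worth flagging.

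\textbf{Step~0.} Your claim that ``$H(u^\ve_\delta)=H(0)$'' on $\{u^\ve_\delta<0\}$ is not justified by the assumptions; what actually makes the $Q^\ve H$-term vanish is writing $Q^\ve H(u^\ve_\delta)\cdot\nabla(u^\ve_\delta)^{-}=\nabla\!\cdot\!\big[Q^\ve\!\int_0^{(u^\ve_\delta)^-}\!H(\xi)\,d\xi\big]$ (via $\nabla\!\cdot Q^\ve=0$) and integrating, using $Q^\ve\!\cdot\nu=0$ on $\Gamma^\ve$ and $(u^\ve_\delta)^{-}=0$ on $\partial G^\ve$. The paper uses the slightly different test function $v=u^\ve_\delta-\int_0^{(u^\ve_\delta)^-}k_\delta^{-1}$, but both routes work.

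\textbf{Step~2.} Here you genuinely diverge from the paper. The paper states that for the bound on $\sqrt{k_\delta(u^\ve_\delta)}\,\partial_t\nabla u^\ve_\delta$ one \emph{needs} to go back to the penalty equation \eqref{penalty}, test it with $\partial_t u^\ve_{\delta,\mu}$, show that the penalty contribution $\langle\mathcal B(u^\ve_{\delta,\mu}-\kappa_D),\partial_t u^\ve_{\delta,\mu}\rangle\ge 0$ via properties of the projection $P_{\mathcal K^\ve}$, and only then let $\mu\to 0$. Your alternative---using the time-shifted competitor $v(t)=u^\ve_\delta(t-h)$, which is admissible since $u^\ve_\delta(t-h)\in\mathcal K^\ve$, then dividing by $-h$ and letting $h\to 0^+$---is legitimate because Lemma~\ref{Lemma_existence_regular} already gives $\partial_t u^\ve_\delta\in L^2(0,T;H^1(G^\ve))$ and $\partial_t b_\delta(u^\ve_\delta)\in L^2(G^\ve_T)$, so all limits and pairings are well defined. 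This is a cleaner route at the level of the variational inequality itself; the paper's detour through the penalty level is not strictly necessary, though it is natural given that existence was obtained that way. The subsequent estimation of each term (capillary cross-term via $k_\delta P_{c,\delta}\le C$ and Step~1; $k(u^\ve_\delta)g$ via boundedness of $k$; $Q^\ve H$ via integration by parts in $x$ and $|H'|\le C\sqrt{b'}$; the boundary term via $\partial_t$-integration of a primitive of $f_1$) coincides with the paper's.
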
 

\begin{proof} [Proof. ]
To show that solutions of  \eqref{main2_regul}  are non-negative we consider $v^\ve_\delta= u^\ve_\delta -\widetilde  h((u^\ve_\delta)^-)$ as a test function in \eqref{main2_regul}, where 
$u^{-}=\min\{ u, 0\}$ and $$\widetilde h(w)=\int_{0}^{w} \frac 1 {k_\delta(\xi)} d\xi.$$ 
Notice that $v^\ve_\delta (t,x)= \kappa_D \geq 0$ on $\partial G$ and  $v^\ve_\delta(t,x)\geq 0$ on $\Gamma^\ve$ for $t \in (0,T)$. The definition of $\widetilde h$  implies that  $\widetilde h((u^\ve_\delta)^{-}) =0$ if $u^\ve_\delta\geq 0$ and  $\widetilde h((u^\ve_\delta)^{-}) <0$ for $u^\ve_\delta < 0$, and   hence   $\widetilde h((u^\ve_\delta)^{-})  =   (u^\ve_\delta)^{-} / k_\delta(\delta)$.
Thus we obtain  
 \begin{equation}\label{main2_posit}
\begin{aligned} 
\langle \partial_t b_\delta(u^\ve_\delta), \widetilde h((u^\ve_\delta)^{-}) \rangle_{G^\ve_\tau}  + \langle  A^\ve(x) (P_{c, \delta}(u^\ve_\delta) \nabla u^\ve_\delta +  \partial_t \nabla u^\ve_\delta) , \nabla(u^\ve_\delta)^{-} \rangle_{G^\ve_\tau}\\
   - \langle   F^\ve(t,x, u^\ve_\delta) , \nabla \widetilde h((u^\ve_\delta)^{-} )\rangle_{G^\ve_\tau}  
 + \ve  \langle  f^\ve(t, x, u^\ve_\delta), \widetilde h((u^\ve_\delta)^{-}) \rangle_{\Gamma^\ve_\tau}     \leq 0, 
\end{aligned} 
\end{equation}
for $\tau \in (0, T]$. Using the  definition of $\widetilde h$ and  properties of $f^\ve$, for the boundary integral we have
$$
  \langle\ve f^\ve(t,x,u^\ve_\delta), \widetilde h((u^\ve_\delta)^{-}) \rangle_{\Gamma^\ve_\tau} =  \langle\ve f^\ve(t,x,u^\ve_\delta), \widetilde h((u^\ve_\delta)^{-})\chi_{u^\ve_\delta \leq 0} \rangle_{\Gamma^\ve_\tau}  \geq 0.  
$$
Assumptions on $F^\ve$  and the boundary conditions on $\partial  G^\ve$  imply
$$
\begin{aligned}
 \langle   F^\ve(t,x, u^\ve_\delta) , \nabla \widetilde h((u^\ve_\delta)^{-} )\rangle_{G^\ve_\tau} = \langle g, \nabla(u^\ve_\delta)^{-} \rangle_{G^\ve_\tau} 
 + \int_0^\tau \int_{G^\ve} \nabla\cdot \ \widetilde H^\ve_\delta(t,x, (u_\delta^\ve)^{-}) \, dx dt   =0, 
 \end{aligned} 
$$
where  $\widetilde H^\ve_\delta(t,x, v) = Q^\ve(t,x)  \int_{0}^v  H(\xi)/ k_\delta(\xi) \, d\xi$. 
Assumptions on $b$, the definition of $\widetilde h$, and the non-negativity of initial data  ensure 
$$
\begin{aligned}
\langle \partial_t b_\delta(u^\ve_\delta),  \widetilde h((u^\ve_\delta)^{-}) \rangle_{G^\ve_\tau} = \langle \partial_t b_\delta((u^\ve_\delta)^{-}),  \widetilde h((u^\ve_\delta)^{-}) \rangle_{G^\ve_\tau}  = \int_{G^\ve} \int_0^{(u^\ve_\delta(\tau))^{-}}\hspace{-0.05 cm }  b^\prime_\delta (\xi) \int_0^\xi \frac {d\eta}{k_\delta(\eta)} d\xi dx \geq 0, 
 \end{aligned}
$$  
for $\tau \in (0, T]$. Then the non-negativity of initial conditions, i.e.\  $u_0(x)\geq 0$  in $G$, and assumptions on $A$  yield  
$$
\sup_{(0,T)} \|\nabla  (u_\delta^\ve)^{-}\|_{L^2(G^\ve)} = 0, 
$$
and using  the non-negativity  of $u_\delta^\ve$ on  $(0,T)\times \partial G^\ve$ we conclude  $u^\ve_\delta(t,x) \geq 0$ a.e.\ in $(0,T)\times G^\ve$.

To derive a priori estimates  in \eqref{a_priori_1}, we  first consider $v^\ve_\delta= u^\ve_\delta - h_\delta(u^\ve_\delta)$ as a test function in \eqref{main2_regul}, where 
$$h_\delta(v) = \theta \int_{\kappa_D}^{v} \frac 1{ k_\delta(\xi)} d\xi \; \; \text{  and  } \; \; \theta = \min_{z\geq \kappa_D } k(z)  >0,  $$  
and  obtain 
\begin{equation}\label{regul_ineq1}
\begin{aligned}
\langle \partial_t b_\delta(u^\ve_\delta) , h_\delta(u^\ve_\delta)  \rangle_{G^\ve_s}+ \theta \langle A^\ve(x)(P_{c, \delta}(u_\delta^\ve) \nabla u_\delta^\ve +  \partial_t \nabla u_\delta^\ve), \nabla u_\delta^\ve  \rangle_{G^\ve_s} \\
-  \langle  F^\ve (t,x, u^\ve_\delta), \nabla h_\delta(u^\ve_\delta) \rangle_{G^\ve_s}   + 
 \langle \ve f^\ve(t,x , u^\ve_\delta),  h_\delta(u^\ve_\delta) \rangle_{\Gamma^\ve_s}  \leq  0
 \end{aligned}
\end{equation}
for  $s\in (0,T]$.  Notice that $h_\delta(v) <0$ for $v<\kappa_D$, $h_\delta(\kappa_D) =0$, and $0<h_\delta(v)\leq v$ for $v > \kappa_D$.  Thus   we obtain  that $v^\ve_\delta(t)\in \mathcal K^\ve$ for $u^\ve_\delta (t)\in \mathcal K^\ve$, since $v^\ve_\delta(t) \geq 0$ on $\Gamma^\ve$ if $u^\ve_\delta(t) \geq 0$ on $\Gamma^\ve$ and $v^\ve_\delta(t)=\kappa_D$ on  $\partial G$ if $u^\ve_\delta(t) = \kappa_D$ on $\partial G$. 

We shall estimate each term in \eqref{regul_ineq1} separately. The boundary integral can be written as
$$
\begin{aligned}
 \langle \ve f^\ve(t,x , u^\ve_\delta),  h_\delta(u^\ve_\delta) \rangle_{\Gamma^\ve_s} =    \langle \ve f^\ve(t,x , u^\ve_\delta),  h_\delta(u^\ve_\delta) \chi_{u^\ve_\delta < \kappa_D} \rangle_{\Gamma^\ve_s} 
  +   \langle \ve f^\ve(t,x , u^\ve_\delta),  h_\delta(u^\ve_\delta) \chi_{u^\ve_\delta \geq \kappa_D} \rangle_{\Gamma^\ve_s}. 
  \end{aligned}
$$
Assumptions on $f^\ve$  imply  
 $$
 \begin{aligned} 
&  \langle \ve f^\ve(t,x , u^\ve_\delta),  h_\delta(u^\ve_\delta) \chi_{u^\ve_\delta \geq \kappa_D} \rangle_{\Gamma^\ve_s}  \geq 0, 
\\
& \big| \langle \ve f^\ve(t,x , u^\ve_\delta),  h_\delta(u^\ve_\delta) \chi_{u^\ve_\delta < \kappa_D} \rangle_{\Gamma^\ve_s} \big| \leq C, 
\end{aligned}
$$
where the constant $C$ is independent of $\delta$ and $\ve$.  To estimate the third term in \eqref{regul_ineq1} we use the  properties of $Q^\ve$ and $H$ and obtain 
$$
 \langle  F^\ve(t,x, u^\ve_\delta), \nabla h_\delta(u^\ve_\delta) \rangle_{G^\ve_s} = \theta \langle g, \nabla u^\ve_\delta \rangle_{G^\ve_s}
 +  \int_0^s\int_{G^\ve} \nabla\cdot \mathcal H_\delta^\ve(t,x, u^\ve_\delta) dx dt , 
 $$
where 
$
\mathcal H^\ve_\delta(t,x, v) = \theta \, Q^\ve(t,x)  \int_{\kappa_D}^v    H(\xi)[ k_\delta(\xi)]^{-1}  d\xi. 
$ 
Using  $Q^\ve(t,x) \cdot \nu =0$ on $\Gamma^\ve$  and $\mathcal H^\ve_\delta(t,x, \kappa_D) =0$ yields  
$$
\int_0^s \int_{G^\ve} \nabla\cdot \mathcal H_\delta^\ve(t,x, u^\ve_\delta) dx dt =\int_0^s \int_{\partial G^\ve} 
 \mathcal H_\delta^\ve(t,x, u^\ve_\delta)  \cdot \nu \, d\gamma_x dt= 0. 
$$
The first term in \eqref{regul_ineq1}  can be write as 
$$
\begin{aligned}
\langle \partial_t b_\delta(u^\ve_\delta) , h_\delta(u^\ve_\delta)  \rangle_{G^\ve_s}  & = \int_{G^\ve_s} \partial_t  \int_{\kappa_D}^{u^\ve_\delta} b_\delta^\prime(\xi) h_\delta(\xi)  \, d \xi dx dt \\ 
& =
\int_{G^\ve}   \int_{\kappa_D}^{u^\ve_\delta(s)} b^\prime_\delta(\xi) h_\delta(\xi)  \, d \xi dx 
- \int_{G^\ve}  \int_{\kappa_D}^{u^\ve_{\delta}(0)} b^\prime_\delta(\xi) h_\delta(\xi)  \, d \xi dx.
\end{aligned}
$$
The definition of $h_\delta$ and properties of function  $b$ ensure  that for $u^\ve_\delta \leq \kappa_D$
$$
\begin{aligned}
\int_{G^\ve}   \int_{\kappa_D}^{u^\ve_\delta(s)} b^\prime_\delta(\xi) h_\delta(\xi)  \, d \xi dx =  \int_{G^\ve}   \int^{\kappa_D}_{u^\ve_\delta(s)} b^\prime_\delta(\xi)  \int^{\kappa_D}_{\xi}  \frac {d\eta}  {k_\delta (\eta)}  \,  d \xi dx 
\geq  C_1 \int_{G^\ve}  |u^\ve_\delta+\delta|^{(1+\alpha - \beta)} dx - C_2,  
\end{aligned}
$$
for $s\in (0, T]$ and  positive constants $C_1$ and $C_2$, which are independent of $\delta$ and $\ve$.  For $u^\ve_\delta > \kappa_D$, the monotonicity of $b$ and nonnegativity of $k$ ensure
\begin{equation*}
\int_{G^\ve}   \int_{\kappa_D}^{u^\ve_\delta(s)} b^\prime_\delta(\xi) h_\delta(\xi)  \, d \xi dx  =   \theta \int_{G^\ve}  \int_{\kappa_D}^{u^\ve_\delta(s)} b^\prime_\delta(\xi)  \int_{\kappa_D}^{\xi} \frac{ 1}{ k_\delta(\eta)} d\eta \, d\xi  \, dx  \geq 0  
\end{equation*}
for $s\in (0, T]$. 
Then integrating in \eqref{regul_ineq1} by parts with respect to time variable  yields 
\begin{equation}
\begin{aligned} 
 \int_{G^\ve} \left[ \int_{\kappa_D}^{u^\ve_\delta(s)}  b^\prime_\delta (\xi)h_\delta(\xi)  d\xi \,   \chi_{u^\ve_\delta \leq \kappa_D} 
 +  \int_{\kappa_D}^{u^\ve_\delta(s)}   b^\prime_\delta (\xi)h_\delta(\xi) d\xi \,  \chi_{u^\ve_\delta \geq \kappa_D} \right] dx  
 +  
 \int_{G^\ve} |\nabla u^\ve_\delta(s) |^2  dx  
 \\  +\int_{G^\ve_s} P_{c,\delta}(u^\ve_\delta) |\nabla  u^\ve_\delta|^2  dx dt 
  \leq  C_1 + C_2 \int_{G^\ve}  \int_{\kappa_D}^{u^\ve_{\delta}(0)} b_\delta^\prime(\xi) h_\delta(\xi)  \, d \xi dx + C_3 \int_{G^\ve} |\nabla u^\ve_\delta(0) |^2  dx 
  \end{aligned} 
\end{equation}
for $s\in (0, T]$,  where the constants $C_j$, with $j=1,2,3$, are independent of $\ve$ and $\delta$.   Hence assumptions on $u_0$ ensure  
\begin{equation}\label{ineq_reg_2}
\begin{aligned}
\sup_{(0,T)} \int_{G^\ve}  |u^\ve_\delta+ \delta|^{1+\alpha-\beta} \chi_{u^\ve_\delta \leq \kappa_D} dx+ \sup_{(0,T)} \int_{G^\ve}  |\nabla u^\ve_\delta|^2dx   + \int_0^T \int_{G^\ve} P_{c,\delta}(u^\ve_\delta) |\nabla  u^\ve_\delta|^2   dx dt \leq  C, 
\end{aligned}
\end{equation}
with a   positive  constant $C$  independent of $\ve$ and $\delta$.

To derive an estimate for $\sqrt{k_{\delta}(u^\ve_\delta)} \partial_t \nabla u^\ve_\delta$ we need to use the  equation with the penalty operator~\eqref{penalty}. Testing  equation \eqref{penalty}  by  $v^\ve =  \partial_t u^\ve_{\delta, \mu}$  yields 
\begin{equation}\label{estim_33}
\begin{aligned} 
\langle \partial_t b_\delta(u^\ve_{\delta,\mu}), \partial_t u^\ve_{\delta, \mu}  \rangle_{G_s^\ve} +   \langle A^\ve(x) k_\delta(u^\ve_{\delta, \mu}) [ P_{c,\delta}(u^\ve_{\delta, \mu}) \nabla u_{\delta, \mu}^\ve +  \partial_t \nabla u_{\delta, \mu}^\ve], \partial_t \nabla u_{\delta, \mu}^\ve \rangle_{G^\ve_s} 
  - \langle  F^\ve(t,x, u^\ve_{\delta, \mu}),  \nabla \partial_t u^\ve_{\delta, \mu} \rangle_{G^\ve_s} \\
  + 
 \langle \ve f^\ve (t,x , u^\ve_{\delta, \mu}),  \partial_t u^\ve_{\delta, \mu} \rangle_{\Gamma^\ve_s} 
   +\frac 1 \mu \int_0^s \langle \mathcal B(u^\ve_{\delta, \mu} - \kappa_D), \partial_t u^\ve_{\delta, \mu} \rangle_{V^\prime, V} dt  =0, 
 \end{aligned} 
\end{equation}
for $s \in (0, T]$. 
Using the property of the  projection operator  \eqref{projection_properties}
  for the  difference quotient of $P_{\mathcal K^\ve} u$ with respect to the time variable  we obtain
$$
0 \leq\frac 1 h\langle J(u-P_{\mathcal K^\ve} u), P_{\mathcal K^\ve} u - P_{\mathcal K^\ve} u(\cdot - h)  \rangle_{V^\prime, V}.
$$  
Then,  the last inequality,  together with  the regularity   $\partial_t u^\ve_{\delta, \mu} \in L^2(0,T; V)$ and  the fact that $u_0, \kappa_D \in \mathcal K^\ve$,   yields
$$
\begin{aligned} 
&\int_0^s \langle \mathcal B(\widetilde u^\ve_{\delta, \mu} ), \partial_t u^\ve_{\delta, \mu} \rangle_{V^\prime, V} dt = 
\lim\limits_{h\to 0}  \sum_{j=1}^N  \big \langle \mathcal B(\widetilde u^\ve_{\delta, \mu}(t_j)), u^\ve_{\delta, \mu}(t_j) -u^\ve_{\delta, \mu}(t_{j-1}) \big  \rangle_{V^\prime, V} 
\\
& \qquad = \lim\limits_{h\to 0}  \sum_{j=1}^N   \Big[ \Big \langle J(\widetilde u^\ve_{\delta, \mu} -  P_{\mathcal K^\ve} \widetilde u^\ve_{\delta, \mu})(t_j) , (\widetilde u^\ve_{\delta, \mu} - P_{\mathcal K^\ve} \widetilde u^\ve_{\delta, \mu})\Big|_{t_{j-1}}^{t_i} \Big \rangle_{V^\prime,V } 
\\ 
& \qquad \qquad + 
\big  \langle J(\widetilde u^\ve_{\delta, \mu}  - P_{\mathcal K^\ve} \widetilde u^\ve_{\delta, \mu} )(t_j) ,   P_{\mathcal K^\ve} \widetilde u^\ve_{\delta, \mu}(t_j) - P_{\mathcal K^\ve} \widetilde u^\ve_{\delta, \mu}(t_{j-1}) \big  \rangle_{V^\prime,V } \Big]
\\
&\qquad  \geq\frac 12  \int_{G^\ve}\Big[  | (\widetilde u^\ve_{\delta, \mu}   - P_{\mathcal K^\ve} \widetilde u^\ve_{\delta, \mu})(s)|^2  + | \nabla (\widetilde u^\ve_{\delta, \mu} - P_{\mathcal K^\ve} \widetilde u^\ve_{\delta, \mu})(s)|^2\Big] dx\geq 0, 
\end{aligned} 
$$
where $\widetilde u^\ve_{\delta, \mu}= u^\ve_{\delta, \mu}-\kappa_D$ and $t_j=jh$ for $j=1,\ldots, N$,  and $N \in \mathbb N$, with  $t_N= Nh=s$. Using assumptions on the functions $k$ and $P_c$ and applying the H\"older inequality  yield
$$
\begin{aligned}
\langle A^\ve(x) k_\delta(u^\ve_{\delta, \mu})  P_{c, \delta}(u^\ve_{\delta, \mu}) \nabla u_{\delta, \mu}^\ve, \partial_t \nabla u_{\delta, \mu}^\ve \rangle_{G^\ve_s}
\leq  \sigma \|\sqrt{k_\delta(u^\ve_{\delta, \mu}) } \partial_t \nabla u_{\delta, \mu}^\ve\|_{L^2(G^\ve_s)} \\
+ C_\sigma \|k_\delta(u^\ve_{\delta, \mu}) P_{c,\delta}(u^\ve_{\delta, \mu})\|_{L^\infty(G^\ve_s)}  
\|\sqrt{P_{c, \delta}(u^\ve_{\delta, \mu})} \nabla u_{\delta, \mu}^\ve \|_{L^2(G^\ve_s)}, 
\end{aligned}
$$
for some $0<\sigma \leq a_0/8$. The boundary term can be written as 
$$
\begin{aligned} 
 \langle \ve f^\ve(t,x , u^\ve_{\delta, \mu}),  \partial_t u^\ve_{\delta, \mu} \rangle_{\Gamma^\ve_s} =  
   \ve \int_{\Gamma^\ve_s}  \partial_t  \int_{\kappa_D}^{ u^\ve_{\delta, \mu}}  f^\ve(t,x , \xi) \, d\xi  d\gamma dt 
   -   \ve \int_{\Gamma^\ve_s} \int_{\kappa_D}^{ u^\ve_{\delta, \mu}} \partial_t  f^\ve(t,x , \xi) \, d\xi  d\gamma dt . 
   \end{aligned} 
$$
Hence   assumptions on $f^\ve$ imply 
$$
\begin{aligned} 
 \left|\langle \ve f^\ve(t,x , u^\ve_{\delta, \mu}),  \partial_t u^\ve_{\delta, \mu}  \rangle_{\Gamma^\ve_s} \right| 
 \leq  \sigma \ve \Big[ \int_{\Gamma^\ve} |u^\ve_{\delta, \mu} (s)|^2 d\gamma +  \int_{\Gamma^\ve_s} |u^\ve_{\delta, \mu}|^2 d\gamma dt \Big]+ C_{\sigma},
  \end{aligned} 
$$ 
 with some  constant  $C_\sigma$ independent of $\mu$, $\ve$ and $\delta$, and an arbitrary fixed $\sigma >0$. 
Then the trace estimate 
 $$
 \ve \|v\|^2_{L^2(\Gamma^\ve)} \leq C\big[ \|v\|^2_{L^2(G^\ve)} + \ve^2 \|\nabla v\|^2_{L^2(G^\ve)} \big],  
 $$
which follows from the definition of $G^\ve$ and $\Gamma^\ve$, the standard trace estimate for  $v \in H^1(Y^\ast)$, and a scaling argument,   combined with the properties of  an extension of $u^\ve_{\delta, \mu}$ from $G^\ve$ into $G$, see Remark~\ref{remark_extension},  and  the Dirichlet boundary condition on $\partial G$, ensures
 $$
  \left|\langle \ve f^\ve(t,x , u^\ve_{\delta, \mu}),  \partial_t u^\ve_{\delta, \mu} \rangle_{\Gamma^\ve_s} \right| 
 \leq \sigma_1 \big[  \|\nabla u^\ve_{\delta, \mu} (s)\|^2_{L^2(G^\ve)} +  \|\nabla u^\ve_{\delta, \mu}\|^2_{L^2(G^\ve_s)}\big]  + C, 
 $$
 with   $s\in (0, T]$.  The assumptions on $F^\ve$  and $k$ and the fact that $\partial_t u^\ve_{\delta, \mu}(t,x) = 0$ on $(0,T)\times \partial G$ yield
 $$
 \begin{aligned} 
& \langle  F^\ve(t,x, u^\ve_{\delta, \mu}),  \nabla \partial_t u^\ve_{\delta, \mu} \rangle_{G^\ve_s} = \langle g \, k_\delta(u^\ve_{\delta, \mu}), \nabla \partial_t u^\ve_{\delta, \mu} \rangle_{G^\ve_s}  -   \langle  Q^\ve(t,x) H^\prime(u^\ve_{\delta, \mu})[b_\delta^\prime(u^\ve_{\delta, \mu})]^{-\frac 12} \nabla u_{\delta, \mu}^\ve,  \sqrt{b^\prime_\delta(u^\ve_{\delta, \mu} )} \partial_t u^\ve_{\delta, \mu} \rangle_{G^\ve_s}.
 \end{aligned} 
 $$
Applying  the H\"older inequality and using assumptions on $H$ and $Q^\ve$ we obtain 
 $$
 \begin{aligned} 
& | \langle  F^\ve(t,x, u^\ve_{\delta, \mu}),  \nabla \partial_t u^\ve_{\delta, \mu} \rangle_{G^\ve_s} |  \leq 
\sigma_1\| \sqrt{k_\delta(u^\ve_{\delta, \mu})} \nabla \partial_t u^\ve_{\delta, \mu} \|^2_{L^2(G^\ve_s)}  
\\ &\qquad  + 
\sigma_2  \| \sqrt{b^\prime_\delta(u^\ve_{\delta, \mu})} \partial_t u^\ve_{\delta, \mu} \|^2_{L^2(G^\ve_s)} 
+ C_1 \| \nabla u_{\delta, \mu}^\ve \|^2_{L^2(G^\ve_s)} + C_2, 
 \end{aligned} 
 $$
for  $0<\sigma_1 \leq a_0/8$, $0<\sigma_2 \leq 1/4$ and constants $C_1, C_2>0$ are  independent of $\mu$,  $\ve$,  and $\delta$. 

Using the estimate for $\nabla u^\ve_{\delta, \mu}$ in $L^\infty(0,T; L^2(G^\ve))$ and $\sqrt{P_{c, \delta}(u^\ve_{\delta, \mu})} \nabla u_{\delta, \mu}^\ve$ in $L^2((0,T)\times G^\ve)$, which can be derived in a similar way as  the corresponding  estimates for $\nabla u^\ve_\delta$ and $\sqrt{P_{c, \delta}(u^\ve_{\delta})} \nabla u_{\delta}^\ve$ in  \eqref{ineq_reg_2} by using estimates for the penalty operator $\mathcal B$ similar to those obtained  in the derivation of inequality \eqref{positive_B},  we obtain  
$$
 \| \sqrt{b^\prime_\delta(u^\ve_{\delta, \mu})} \partial_t u^\ve_{\delta, \mu} \|_{L^2(G^\ve_s)} + \|\sqrt{k_\delta(u^\ve_{\delta, \mu})} \partial_t \nabla u_{\delta, \mu}^\ve\|_{L^2(G^\ve_s)} \leq C, 
$$
for any $s \in (0, T]$ and  a constant $C$ independent of $\mu$, $\ve$ and $\delta$.  Notice that assumptions on $k$ and definition of $\theta$  imply  that 
$u^\ve_{\delta, \mu} - \kappa_D - \theta \int_{\kappa_D}^{u^\ve_{\delta, \mu}} [k_\delta(\xi)]^{-1} d\xi \geq 0$.
 Considering $\mu\to 0$ and using continuity and strict positivity  of $k_\delta$ and $b^\prime_\delta$, together with the strong convergence of $u^\ve_{\delta, \mu}$ in $L^2(G^\ve_T)$, as $\mu \to 0$, and  lower-semicontinuity of a norm,  we obtain the third  estimate in \eqref{a_priori_1}.  
 
  If $b$ is Lipschitz continuous we also have   
$$
\| \partial_t b_\delta(u^\ve_\delta) \|^2_{L^2(G^\ve_T)}\leq  \sup_{(t,x) \in G^\ve_T} |b_\delta^\prime(u^\ve_\delta)| \,  \|\sqrt{b^\prime_\delta (u^\ve_\delta)} \partial_t u^\ve_\delta \|^2_{L^2(G^\ve_T)} \leq  C. 
$$
Otherwise,   we can   consider 
$$
\begin{aligned} 
\| \partial_t b_\delta(u^\ve_\delta) \|_{L^2(0,T; L^r(G^\ve))}=  \|b^\prime_\delta (u^\ve_\delta) \partial_t u^\ve_\delta \|_{L^2(0,T; L^r(G^\ve))} \\ \leq  
\sup_{(0,T)} \|\sqrt{b^\prime_\delta(u^\ve_\delta) }\|^{\frac{2-r}r}_{L^{\frac {2r}{2-r}}(G^\ve)} \|\sqrt{b^\prime_\delta (u^\ve_\delta)} \partial_t u^\ve_\delta \|^2_{L^2(G^\ve_T)}, 
\end{aligned} 
$$
for some $1<r<2$.  Then  the first estimate in \eqref{a_priori_1} for $0\leq u^\ve_\delta(t,x) \leq 1$ and  if  $0<\alpha<1$,  and  assumptions on $b^\prime$ for $u^\ve_\delta(t,x) \geq 1$,  combined with  the uniform  boundedness  of $\|u^\ve_\delta\|_{L^\infty(0,T; H^1(G^\ve))}$, ensure  
$$
\sup_{(0,T)}\big \|\sqrt{b^\prime_\delta(u^\ve_\delta) } \big\|^{\frac{2-r}r}_{L^{\frac {2r}{2-r}}(G^\ve)} \leq C, 
$$
where  $1< r < 3/2$ for $n=3$ and $1<r<4/3$ for $n = 2$. 

From assumptions on $b$ and the estimate for $u^\ve_\delta$ in $L^\infty(0, T; H^1(G^\ve))$, we also obtain  the boundedness of $b_\delta(u^\ve_\delta)$ in $L^\infty(0,T; L^2(G^\ve))$, uniformly in $\ve$ and $\delta$. 

To derive the estimate for $\nabla \partial_t u^\ve_\delta$ in $L^p((0,T)\times G^\ve)$,  with  some $p>1$, we follow the same ideas as in~\cite{Mikelic}. Using   assumptions on  $P_c$  together with  $u^\ve_\delta \geq 0$  we can rewrite 
$$
\sqrt{P_{c, \delta}(u^\ve_\delta)}\nabla u^\ve_\delta = \nabla \Big( \int_0^{u^\ve_{\delta} } \sqrt{P_{c,\delta}(\xi)} d\xi\Big),  
$$
where 
$$
\int_0^{u^\ve_{\delta} } \sqrt{P_{c,\delta}(\xi)} d\xi   = C_1 \left[(u^\ve_\delta + \delta)^{1-\lambda/2} - \delta^{1-\lambda/2}  \right]+ C_2, 
$$
with  some constants $C_1$ and $C_2$ independent of $\ve$ and $\delta$. 
Then the estimate for $P_{c, \delta}(u^\ve_\delta)|\nabla u^\ve_\delta|^2$,  together with  the Dirichlet boundary condition on $\partial G$,  implies that 
$(u^\ve_\delta+\delta)^{1- \lambda/2} \in L^2(0,T; H^1(G^\ve))$.  Considering  an extension $\overline{(u^\ve_\delta+\delta)}^{1-\frac \lambda 2}$ of $(u^\ve_\delta+\delta)^{1-\frac \lambda 2}$ from $G^\ve$ into $G$, see Remark~\ref{remark_extension} applied to $v^\ve = (u^\ve_\delta+\delta)^{1- \frac \lambda 2}$,  we obtain 
$$
\begin{aligned} 
\|\nabla \overline{(u^\ve_\delta+\delta)}^{1- \lambda/2} \|_{L^2((0,T)\times G)} & \leq  C_1\|\nabla (u^\ve_\delta+\delta)^{1- \lambda/2} \|_{L^2((0,T)\times G^\ve)}  \leq C_2, \\
\| (u^\ve_\delta+\delta)^{1- \lambda/2} \|_{L^2((0,T)\times G^\ve)} & \leq \| \overline{(u^\ve_\delta+\delta)}^{1- \lambda/2} \|_{L^2((0,T)\times G)} \\
&\leq  
C_3\|\nabla \overline{(u^\ve_\delta+\delta)}^{1- \lambda/2} \|_{L^2((0,T)\times G)} +C_4 \leq C_5, 
\end{aligned}  
$$
where the constants $C_j$, with $j=1,\ldots,5$, are independent of   $\delta$ and $\ve$.  Notice that the extension  $\overline{(u^\ve_\delta+\delta)}^{1- \lambda/2}$ satisfies the same Dirichlet boundary condition on $\partial G$ as the original function $(u^\ve_\delta+\delta)^{1- \lambda/2}$.   Then the Sobolev embedding theorem ensures 
\begin{equation}\label{embed_11}
\begin{aligned} 
&\|\overline{(u^\ve_\delta+\delta)}^{1- \lambda/2} \|_{L^2(0,T; L^{q_1}(G))} \leq C, \qquad &&  \quad q_1\in (2, + \infty) \, && \text{ for } n =2,\\
&\|\overline{(u^\ve_\delta+\delta)}^{1- \lambda/2} \|_{L^2(0,T; L^{q_2}(G))} \leq C, \qquad &&  \quad q_2 = \frac {2n}{n-2}  \; && \text{ for } n \geq 3, 
\end{aligned} 
\end{equation}
with a constant $C>0$ independent of $\ve$ and $\delta$. 

For  $\theta$ and $\theta_1$ such that $(1-\lambda/2)\theta + (1+ \alpha -\beta)\theta_1 = -\gamma \beta$, where $\gamma >1$ and $\beta$ is as in the assumption on $k$,   we obtain  
\begin{equation}\label{eq:ineq_22}
\begin{aligned} 
& \int_{G^\ve} ({u^\ve_\delta + \delta})^{-\gamma \beta} dx = \int_{G^\ve} ({u^\ve_\delta + \delta})^{(1-\lambda/2)\theta}  (u^\ve_\delta + \delta)^{(1+ \alpha -\beta)\theta_1} dx \\
& \leq \Big(\int_{G^\ve} ({u^\ve_\delta + \delta})^{(1- \lambda/2) p} dx \Big)^{\theta/p} \Big(\int_{G^\ve} ({u^\ve_\delta + \delta})^{(1+ \alpha- \beta) \theta_1 p_1} dx \Big)^{1/p_1}\\
& \leq \Big(\int_{G} (\overline{u^\ve_\delta + \delta})^{(1- \lambda/2) p} dx \Big)^{\theta/p} \Big(\int_{G^\ve} ({u^\ve_\delta + \delta})^{(1+ \alpha- \beta) \theta_1 p_1} dx \Big)^{1/p_1}.
\end{aligned} 
\end{equation} 
For $n=3$ we have $p=6$ and $p_1 = 6/(6-\theta)$.  Then the estimate for $(u^\ve_\delta+\delta)^{(1+\alpha - \beta)}$ in $L^1((0,T)\times G^\ve)$ yields  $\theta_1 = 1- \theta/6$ and the integrability of  $\overline{(u^\ve_\delta+\delta)}^{1- \lambda/2}$  with respect to the time variable implies $\theta =2$.  
Hence $-\gamma \beta = 2- \lambda + \frac 2 3(1+ \alpha - \beta)$ and in order to ensure that   $\gamma>1$ we require 
\begin{equation}\label{ineq1}
- \frac 1{\beta} \left( \frac 8 3 + \frac 2 3\alpha - \lambda - \frac 2 3 \beta\right) >1 \quad 	\Longleftrightarrow \quad  \frac 8 3 + \frac 2 3 \alpha + \frac \beta 3 < \lambda. 
\end{equation}
If $n=2$  the H\"older exponents in    \eqref{eq:ineq_22} are $p= q_1/\theta$ and $1/p_1= 1 - \theta/q_1$, for any $q_1 > 2$. 
Thus we obtain $\theta=2$, $\theta_1 = 1- 2/q_1$ and 
\begin{equation}\label{ineq2}
- \gamma\beta = (2-\lambda) + (1+ \alpha - \beta) (1- 2/q_1)  \quad 	\text{ and }  \; \; \gamma>1    \;  	\Longleftrightarrow \;     3- \frac 2 {q_1} + \alpha\big(1- \frac 2{q_1}\big) + \frac 2 {q_1} \beta <  \lambda .
\end{equation}
Then, combining  the third estimate in \eqref{a_priori_1},  \eqref{embed_11} and  \eqref{eq:ineq_22},     we obtain   the following estimate
\begin{equation}\label{estim_pseudo_parab_term}
\begin{aligned} 
&\int_0^T \int_{G^\ve} |\nabla \partial_t u^\ve_\delta|^p dx dt = \int_0^T  \int_{G^\ve}  |k_\delta(u^\ve_\delta)^{\frac 1 2}\nabla \partial_t u^\ve_\delta|^p  | k_\delta(u^\ve_\delta)|^{-\frac p 2} dx dt\\
&\leq  \Big(\int_0^T  \int_{G^\ve} k_\delta(u^\ve_\delta)  |\nabla \partial_t u^\ve_\delta|^2 dx dt \Big)^{\frac p 2} 
\Big(\int_0^T  \int_{G^\ve} |k_\delta(u^\ve_\delta)|^{-\frac p{2-p}} dx dt\Big)^{1-\frac p2}\\
& \leq C_1 \Big(\int_0^T  \int_{G^\ve} |k_\delta(u^\ve_\delta)|^{-\frac p{2-p}} dx dt\Big)^{1-\frac p2}
\end{aligned} 
\end{equation}
for some $1<p<2$. Assumptions on  $k$,  conditions on  $\alpha$, $\beta$ and $\lambda$, specified in the formulation of the lemma,  and the first estimate in \eqref{a_priori_1} ensure that there exists such $p=p(\beta, \lambda, \alpha,n)>1$   that 
$$
 \| k_\delta(u_\delta^\ve)^{-p/(2-p)} \| _{L^1((0,T)\times G^\ve) } \leq C_2, 
 $$ 
 where $C_2$ is independent of $\ve$ and $\delta$ and  the exponent $p$ is defined as   
\begin{equation}\label{eq:p} 
\begin{aligned} 
& p = \frac{ 2(3 \lambda + 2 \beta - 2\alpha - 8)}{3\lambda + 5 \beta - 2 \alpha - 8}  &&  \text{ for }  n =3   \text{ and } \beta\geq \lambda > 4 +  \alpha,   \\
& p =  \frac{ 2[2(1+ \alpha - \beta) + q_1 ( \lambda+ \beta - 3 - \alpha)]}{ 2(1+ \alpha - \beta) + q_1( \lambda+2 \beta  -3- \alpha)} &&  \text{ for } n=2,  \text{ any }  q_1> 2,   \text{ and }  \beta \geq \lambda > 3 + \alpha + 4/(q_1 - 2), 
\end{aligned}
\end{equation}
and additionally  inequalities in  \eqref{ineq1} and \eqref{ineq2} are satisfied. This implies the  last estimate in \eqref{a_priori_1}. 
\end{proof}
 
\begin{remark}\label{remark_extension} 
 To ensure that in the derivation of a priori estimates the embedding and Poincar\'e constants are independent of $\ve$,  we considered an extension of $u^\ve_\delta$ and of $(u^\ve_\delta+\delta)^{1- \lambda/2}$   from $G^\ve$ to $G$ with the following properties: 
 There exists an extension $\overline v^\ve$  of  $v^\ve$   from $L^p(0,T; W^{1,p}(G^\ve))$  into $L^p(0, T; W^{1,p}(G))$ such that
\begin{equation}\label{estim_ext_1}
\| \overline v^\ve \|_{L^p(G_T)} \leq  C \|v^\ve \|_{L^p(G^\ve_{T})}, \quad \| \nabla \overline v^\ve \|_{L^p(G_T)} \leq  C \|\nabla v^\ve \|_{L^p(G^\ve_{T})},
\end{equation}
where  $1\leq p < \infty$ and the constant $C>0$ is independent of $\ve$. 
The existence of an extension $\overline v^\ve$ satis\-fying estimates \eqref{estim_ext_1} follows from the assumptions on the geometry of $G^\ve$  and a standard extension operator, see  e.g.\ \cite{Acerbi, CiorPaulin99}.
\end{remark}

A priori estimates \eqref{a_priori_1}  ensure the following convergence results for a subsequence of  $\{u^\ve_\delta\}$ as $\delta \to 0$:
\begin{lemma} \label{lem:conv_delta}
Under assumptions in Lemma~\ref{lem:apriori},  there exists a function $u^\ve \in L^2(0,T; H^1(G^\ve))$, with $\partial_t u^\ve \in L^p(0,T; W^{1, p}(G^\ve))$,    such that, up to a subsequence, 
\begin{equation}\label{convergence:u_ve_delta1}
\begin{aligned}
& u^\ve_\delta  \to u^\ve &&  \text{strongly in } L^2(0,T; H^\sigma(G^\ve))  \text{ and in } L^{r_1}((0,T)\times G^\ve) \; \text{ for } 1< r_1< 6, \\
 &  b_\delta(u^\ve_\delta)  \to  b(u^\ve) \; \; &&  \text{strongly in } L^{r_2}((0,T)\times G^\ve) \;   \text{ for } 1< r_2< 2, \\
  &  k_\delta(u^\ve_\delta)  \to  k(u^\ve) && \text{strongly  in } L^q((0,T)\times G^\ve) \; \text{ for } 1<q<  \infty,  \\
   & b_\delta(u^\ve_\delta) \rightharpoonup  b(u^\ve) && \text{weakly-$\ast$ in } L^\infty(0,T; L^2(G^\ve)),  \\
   & u^\ve_\delta \rightharpoonup  u^\ve && \text{weakly-$\ast$ in } L^\infty(0,T; H^1(G^\ve)),  
\end{aligned}
\end{equation}
where $1/2 < \sigma < 1$, and 
\begin{equation}\label{convergence:u_ve_delta2}
\begin{aligned}
  &  \partial_t b_\delta(u^\ve_\delta)  \rightharpoonup   \partial_t b(u^\ve)  && \text{weakly  in } L^2(0,T; L^r(G^\ve)), \\
 & \partial_t u^\ve_\delta \rightharpoonup  \partial_t u^\ve && \text{weakly in } L^p(0,T; W^{1,p}(G^\ve)),\\
 & \sqrt{k_\delta(u^\ve_\delta)} \nabla \partial_t u^\ve_\delta \rightharpoonup \sqrt{k(u^\ve)} \nabla\partial_t  u^\ve && \text{weakly in }  L^2((0,T)\times G^\ve), \\
  & \sqrt{k_\delta(u^\ve_\delta) P_{c,\delta}(u^\ve_\delta)} \nabla  u^\ve_\delta \rightharpoonup \sqrt{k(u^\ve)P_c(u^\ve)} \nabla   u^\ve  && \text{weakly in }  L^2((0,T)\times G^\ve),
\end{aligned}
\end{equation}
as $\delta \to 0$,    where  $1<p<2$ is defined in \eqref{eq:p},  $1< r < 3/2$ for $n=3$ and $1<r<4/3$ for $n=2$.  Due to the lower semicontinuity of a norm we also have 
\begin{equation}\label{estim:u_ve}
\begin{aligned} 
\|\nabla u^\ve\|_{L^\infty(0,T; L^2(G^\ve))} + \|\sqrt{k(u^\ve)}\partial_t\nabla u^\ve\|_{L^2(G^\ve_T)} + \|\nabla \partial_t u^\ve\|_{L^p(G^\ve_T)} \qquad  \\
+ \|b(u^\ve)\|_{L^\infty(0, T; L^2(G^\ve))}  +\|\partial_t b(u^\ve)\|_{L^2(0, T; L^r(G^\ve))}  \leq C, 
\end{aligned} 
\end{equation}
with a constant $C>0$ independent of $\ve$, and $u^\ve(t,x) \geq 0$ in $(0,T)\times G^\ve$. 
\end{lemma}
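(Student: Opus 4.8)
The plan is to extract a subsequence from the $\delta$-uniform bounds of Lemma~\ref{lem:apriori}, pass to the limit in every term, and then verify estimates \eqref{estim:u_ve} by lower semicontinuity; the only genuinely delicate point is the identification of the weak limits of the nonlinear products $\sqrt{k_\delta(u^\ve_\delta)}\nabla\partial_t u^\ve_\delta$, $\sqrt{k_\delta(u^\ve_\delta)P_{c,\delta}(u^\ve_\delta)}\nabla u^\ve_\delta$ and $\partial_t b_\delta(u^\ve_\delta)$.

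First I would use reflexivity of $L^p(0,T;W^{1,p}(G^\ve))$ ($1<p<2$), of $L^2(0,T;L^r(G^\ve))$ and of $L^2((0,T)\times G^\ve)$, together with the Banach--Alaoglu theorem in $L^\infty(0,T;H^1(G^\ve))$ and $L^\infty(0,T;L^2(G^\ve))$, to obtain along a subsequence all the weak and weak-$\ast$ limits claimed in \eqref{convergence:u_ve_delta1}--\eqref{convergence:u_ve_delta2}; denote by $\chi$, $\zeta\in L^2((0,T)\times G^\ve)$ and $\eta\in L^2(0,T;L^r(G^\ve))$ the weak limits of the three products above. Since $u^\ve_\delta$ is bounded in $L^\infty(0,T;H^1(G^\ve))$ and $\partial_t u^\ve_\delta$ in $L^p(0,T;W^{1,p}(G^\ve))\subset L^1(0,T;L^p(G^\ve))$, the Aubin--Lions--Simon lemma (with $H^1(G^\ve)\hookrightarrow\hookrightarrow H^\sigma(G^\ve)\hookrightarrow L^p(G^\ve)$ for $1/2<\sigma<1$) gives relative compactness of $\{u^\ve_\delta\}$ in $C([0,T];H^\sigma(G^\ve))$, hence strong convergence $u^\ve_\delta\to u^\ve$ in $L^2(0,T;H^\sigma(G^\ve))$ and in $L^2((0,T)\times G^\ve)$; interpolating the $L^2$-limit against the $L^\infty(0,T;L^6(G^\ve))$ bound (using $H^1(G^\ve)\hookrightarrow L^6(G^\ve)$ for $n\le3$) yields strong convergence in $L^{r_1}((0,T)\times G^\ve)$ for every $1<r_1<6$, and a further subsequence with $u^\ve_\delta\to u^\ve$ a.e.\ in $(0,T)\times G^\ve$. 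Non-negativity of $u^\ve$ is then immediate from $u^\ve_\delta\ge0$, and $u^\ve\in L^2(0,T;H^1(G^\ve))$, $\partial_t u^\ve\in L^p(0,T;W^{1,p}(G^\ve))$ read off from the weak limits.

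Next I would identify the limits of the memoryless nonlinearities. Since $k$ is bounded and continuous with $k(0)=0$, $k_\delta(v)=k(v^++\delta)\to k(v^+)$ pointwise and uniformly boundedly, so by dominated convergence $k_\delta(u^\ve_\delta)\to k(u^\ve)$ and $\sqrt{k_\delta(u^\ve_\delta)}\to\sqrt{k(u^\ve)}$ in $L^q((0,T)\times G^\ve)$ for all $1\le q<\infty$; using that $|kP_c|\le C$ on $[0,\infty)$ and that $kP_c$ is continuous there, the same argument gives $k_\delta(u^\ve_\delta)P_{c,\delta}(u^\ve_\delta)\to k(u^\ve)P_c(u^\ve)$ and $\sqrt{k_\delta(u^\ve_\delta)P_{c,\delta}(u^\ve_\delta)}\to\sqrt{k(u^\ve)P_c(u^\ve)}$ in every $L^q((0,T)\times G^\ve)$. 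For $b$, continuity with $b(0)=0$ gives $b_\delta(u^\ve_\delta)\to b(u^\ve)$ a.e., and since $b_\delta(u^\ve_\delta)$ is bounded in $L^\infty(0,T;L^2(G^\ve))$, the Vitali convergence theorem upgrades this to strong convergence in $L^{r_2}((0,T)\times G^\ve)$ for $1<r_2<2$; the weak-$\ast$ limit in $L^\infty(0,T;L^2(G^\ve))$ is then necessarily $b(u^\ve)$.

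The main obstacle is the identification of $\chi$, $\zeta$ and $\eta$. For $\chi$: for any $\phi\in C_0^\infty((0,T)\times G^\ve)^n$ write $\langle\sqrt{k_\delta(u^\ve_\delta)}\nabla\partial_t u^\ve_\delta,\phi\rangle=\langle\nabla\partial_t u^\ve_\delta,\sqrt{k_\delta(u^\ve_\delta)}\phi\rangle$; since $\sqrt{k_\delta(u^\ve_\delta)}\phi\to\sqrt{k(u^\ve)}\phi$ strongly in $L^{p'}((0,T)\times G^\ve)^n$ (a bounded factor times a strongly convergent factor times a smooth $\phi$) while $\nabla\partial_t u^\ve_\delta\rightharpoonup\nabla\partial_t u^\ve$ weakly in $L^p((0,T)\times G^\ve)^n$, the right-hand side tends to $\langle\nabla\partial_t u^\ve,\sqrt{k(u^\ve)}\phi\rangle=\langle\sqrt{k(u^\ve)}\nabla\partial_t u^\ve,\phi\rangle$, so $\chi=\sqrt{k(u^\ve)}\nabla\partial_t u^\ve$, which in particular shows this product lies in $L^2((0,T)\times G^\ve)$. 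The same test-function argument, now using the weak-$\ast$ convergence $\nabla u^\ve_\delta\rightharpoonup\nabla u^\ve$ in $L^\infty(0,T;L^2(G^\ve))$ and the strong $L^2$-convergence of $\sqrt{k_\delta(u^\ve_\delta)P_{c,\delta}(u^\ve_\delta)}\phi$, identifies $\zeta=\sqrt{k(u^\ve)P_c(u^\ve)}\nabla u^\ve$. Finally, the strong convergence $b_\delta(u^\ve_\delta)\to b(u^\ve)$ in $L^{r_2}((0,T)\times G^\ve)$ implies $\partial_t b_\delta(u^\ve_\delta)\to\partial_t b(u^\ve)$ in $\mathcal D'((0,T)\times G^\ve)$, which together with the weak $L^2(0,T;L^r(G^\ve))$-limit $\eta$ forces $\eta=\partial_t b(u^\ve)\in L^2(0,T;L^r(G^\ve))$. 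Estimates \eqref{estim:u_ve} then follow by weak and weak-$\ast$ lower semicontinuity applied to the $\ve$- and $\delta$-uniform bounds of Lemma~\ref{lem:apriori}. I expect the technical crux to be the uniform boundedness and a.e.\ convergence of $\sqrt{k_\delta(u^\ve_\delta)P_{c,\delta}(u^\ve_\delta)}$, which relies essentially on the hypothesis $|kP_c|\le C$ rather than on any separate control of $P_c$, together with keeping the Sobolev and Poincar\'e constants $\ve$-independent via the extension operator of Remark~\ref{remark_extension}.
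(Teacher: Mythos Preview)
Your proposal is correct and follows essentially the same route as the paper's proof: extract weak and weak-$\ast$ limits from the $\delta$-uniform bounds of Lemma~\ref{lem:apriori}, obtain strong convergence of $u^\ve_\delta$ via Aubin--Lions, deduce a.e.\ and $L^q$-convergence of the bounded nonlinearities $k_\delta$, $k_\delta P_{c,\delta}$ and of $b_\delta$, and then identify the product limits by pairing the weak $L^p$-convergence of $\nabla\partial_t u^\ve_\delta$ (resp.\ $\nabla u^\ve_\delta$) with the strong convergence of the bounded scalar factor moved onto the test function. The only cosmetic differences are that the paper uses the compact embedding $H^1\hookrightarrow L^{r_1}$ directly in Aubin--Lions rather than interpolating, bounds $b_\delta(u^\ve_\delta)$ in $L^{r_2}$ via the explicit growth $|b(z)|\le C(1+|z|^3)$ rather than invoking Vitali, and phrases the identification of $\sqrt{k_\delta}\nabla\partial_t u^\ve_\delta$ as a two-step argument (first weak convergence in $L^{p_1}$ for $p_1<p$, then upgrade to $L^2$ via the uniform bound) rather than your single test-function computation.
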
 
\begin{proof}
 Weak-$\ast$ convergence of $u^\ve_\delta$ in $L^\infty(0,T; H^1(G^\ve))$  and weak convergence of $\partial_t u^\ve_\delta$  in $L^p(0,T; W^{1,p}(G^\ve))$ 
 follow directly from the a priori estimates \eqref{a_priori_1}, combined with the Dirichlet boundary condition on $\partial G$ and the Poincar\'e inequality.  Then   using  Lions-Aubin compactness lemma~\cite{Lions} and the fact that embeddings $H^1(G^\ve) \subset H^\sigma(G^\ve)$ for $1/2<\sigma<1$ and  $H^1(G^\ve) \subset L^{r_1}(G^\ve)$ for $1\leq  r_1< 6$ are compact, we obtain the strong convergence of $u^\ve_\delta$ in $L^2(0, T; H^\sigma(G^\ve))$ and  in $L^{r_1}((0,T)\times G^\ve)$.  
 
 Continuity of $b_\delta$, $P_{c, \delta}$ and $k_\delta$  and the strong convergence of $u^\ve_\delta$ imply point-wise convergence 
 $b_\delta(u^\ve_\delta) \to b(u^\ve)$,  $k_\delta(u^\ve_\delta) \to k(u^\ve)$,  $k_\delta(u^\ve_\delta)P_{c,\delta}(u^\ve_\delta) \to k(u^\ve)P_c(u^\ve)$ a.e.\ in $(0,T)\times G^\ve$ as $\delta \to 0$. 
 Assumptions on $b$ yield  $ \|b_\delta(u^\ve_\delta)\|_{L^{r_2}(G^\ve_T)} \leq C_1(1+\|u^\ve_\delta\|^3_{L^{3r_2}(G^\ve_T)} )$, where $3\leq 3r_2<6$. 
 Then the strong convergence of $u^\ve_\delta$ together with the Lebesgue dominated convergence theorem implies the strong convergence of $b(u^\ve_\delta)$ in $L^{r_2}(G^\ve_T)$ for $1< r_2<2$.   Assumptions on functions  $k$ and $P_c$, stated  in Assumption~\ref{assum_1}, ensure  that 
 $|k_\delta(u^\ve_\delta)| \leq C$ and $| k_\delta(u^\ve_\delta) P_{c,\delta}(u^\ve_\delta)| \leq C$ a.e.\ in $G^\ve_T$  independently of $\delta$. Then applying  the Lebesgue dominated convergence theorem implies strong convergence of  $k_\delta(u^\ve_\delta)$  and   $k_\delta(u^\ve_\delta) P_{c,\delta}(u^\ve_\delta)$  in $L^q((0,T)\times G^\ve)$ for any $1< q < \infty$. 
 
 Estimates for $\partial_t b_\delta(u^\ve_\delta)$ together with the convergence  $b_\delta(u^\ve_\delta) \to b(u^\ve)$ in $L^{r_2}(G^\ve_T)$ ensure  weak convergence of 
 $\partial_t b_\delta(u^\ve_\delta) \rightharpoonup   \partial_t b(u^\ve)$  in $L^2(0,T; L^r(G^\ve))$. 
 Weak convergence    $\partial_t u^\ve_\delta$  in $L^p(0,T; W^{1,p}(G^\ve))$  and strong convergence and boundedness  of $k_\delta(u^\ve_\delta)$ ensure weak convergence of $\sqrt{k_\delta(u^\ve_\delta)}\partial_t \nabla u^\ve_\delta  \rightharpoonup \sqrt{k(u^\ve)}\partial_t \nabla u^\ve$ in $L^{p_1}(G^\ve_T)$ for $1<p_1<p$, as $\delta \to 0$.  A priori estimates \eqref{a_priori_1} imply $\sqrt{k_\delta(u^\ve_\delta)}\partial_t \nabla u^\ve_\delta  \rightharpoonup  w$ in $L^2(G^\ve_T)$. Hence $w = \sqrt{k(u^\ve)}\partial_t \nabla u^\ve \in L^2(G^\ve_T)$. Similar arguments  imply the last convergence in \eqref{convergence:u_ve_delta2}. 
\end{proof}

\begin{theorem}
Under assumptions in Lemma~\ref{lem:apriori},  for every fixed $\ve >0$ there exists a  nonnegative solution  of  variational inequality \eqref{main2}.  
 If  $k(\xi)$ is non-degenerate,  $\partial_t u^\ve \in L^2(0, T; W^{1, p_2}(G^\ve))$  and $u_0 \in W^{1, p_2}(G^\ve)$ for $p_2 >n$, or if  $k(\xi)={\rm const}$, $P_c(\xi)$ is Lipschitz continuous for $\xi \geq 0$ and $u^\ve \in L^2(0, T; W^{1,p_2}(G^\ve))$,  then solution of \eqref{main2} is unique. 
\end{theorem}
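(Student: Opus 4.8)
The plan is to construct a solution of \eqref{main2} by passing to the limit $\delta\to 0$ in the regularised inequality \eqref{main2_regul}, whose solvability is Lemma~\ref{Lemma_existence_regular}, using the convergence results of Lemma~\ref{lem:conv_delta} and the a priori bounds of Lemma~\ref{lem:apriori}. Since the convex set $\mathcal K^\ve$ in \eqref{K} is independent of $\delta$, I fix an arbitrary $v\in L^2(0,T;\mathcal K^\ve)$ and rewrite \eqref{main2_regul} as $P_\delta(v)-Q_\delta\geq 0$, where $P_\delta(v)$ collects the terms in which $u^\ve_\delta$ appears only inside the coefficients $b_\delta(u^\ve_\delta)$, $k_\delta(u^\ve_\delta)$, $k_\delta(u^\ve_\delta)P_{c,\delta}(u^\ve_\delta)$, $F^\ve(\cdot,u^\ve_\delta)$, $f^\ve(\cdot,u^\ve_\delta)$ tested against $v$ or $\nabla v$, and $Q_\delta$ collects the ``diagonal'' contributions with $u^\ve_\delta$, $\nabla u^\ve_\delta$ in place of $v$, $\nabla v$. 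It suffices to show $P_\delta(v)\to P(v)$ and $Q\leq\liminf_{\delta\to 0}Q_\delta$, where $P(v)$, $Q$ are the corresponding quantities for the limit $u^\ve$: then $Q\leq\liminf_\delta Q_\delta\leq\limsup_\delta Q_\delta\leq\lim_\delta P_\delta(v)=P(v)$, i.e.\ $u^\ve$ satisfies \eqref{main2}.

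Convergence $P_\delta(v)\to P(v)$ is routine: by Lemma~\ref{lem:conv_delta} the coefficients $k_\delta(u^\ve_\delta)$ and $k_\delta(u^\ve_\delta)P_{c,\delta}(u^\ve_\delta)$ converge strongly in every $L^q(G^\ve_T)$ and stay uniformly bounded, $F^\ve(\cdot,u^\ve_\delta)\to F^\ve(\cdot,u^\ve)$ strongly in $L^2(G^\ve_T)$, $f^\ve(\cdot,u^\ve_\delta)\to f^\ve(\cdot,u^\ve)$ strongly in $L^2(\Gamma^\ve_T)$ (using compactness of the trace), while $\partial_t b_\delta(u^\ve_\delta)\rightharpoonup\partial_t b(u^\ve)$ weakly in $L^2(0,T;L^r(G^\ve))$ and $\sqrt{k_\delta(u^\ve_\delta)}\,\partial_t\nabla u^\ve_\delta$, $\sqrt{k_\delta(u^\ve_\delta)}\,\nabla u^\ve_\delta$ converge weakly in $L^2(G^\ve_T)$; pairing in each term the weakly convergent factor with the strongly convergent one (absorbing a $\sqrt{k_\delta(u^\ve_\delta)}$ into $A^\ve\nabla v$ where needed), and using $v\in H^1(G^\ve)\subset L^{r'}(G^\ve)$ for the $\partial_t b$-pairing, gives the claim.

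For $Q_\delta$ I would argue weak lower semicontinuity. The $b$-term equals $\int_{G^\ve}\Psi_\delta(u^\ve_\delta(T))\,dx-\int_{G^\ve}\Psi_\delta(u_0)\,dx$ with $\Psi_\delta(z)=\int_0^z\xi b_\delta'(\xi)\,d\xi\geq 0$ for $z\geq 0$; since $u^\ve_\delta\to u^\ve$ in $C([0,T];L^2(G^\ve))$ (Lions--Aubin, $\partial_t u^\ve_\delta$ bounded in $L^p(0,T;W^{1,p}(G^\ve))$ and $u^\ve_\delta$ bounded in $L^\infty(0,T;H^1(G^\ve))$), Fatou's lemma gives lower semicontinuity of the first integral while the second converges. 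The term $\langle A^\ve k_\delta(u^\ve_\delta)P_{c,\delta}(u^\ve_\delta)\nabla u^\ve_\delta,\nabla u^\ve_\delta\rangle_{G^\ve_T}$ is the square of the $L^2(G^\ve_T)$-norm of a weakly convergent sequence, hence lower semicontinuous, and the $F^\ve$- and $f^\ve$-diagonal terms converge. The main obstacle is the degenerate pseudoparabolic term $\langle A^\ve k_\delta(u^\ve_\delta)\partial_t\nabla u^\ve_\delta,\nabla u^\ve_\delta\rangle_{G^\ve_T}$: using that for fixed $\delta$ one has $\partial_t u^\ve_\delta\in L^2(0,T;H^1(G^\ve))$, I integrate by parts in time to write it as $\tfrac12\int_{G^\ve}A^\ve k_\delta(u^\ve_\delta(T))|\nabla u^\ve_\delta(T)|^2\,dx-\tfrac12\int_{G^\ve}A^\ve k_\delta(u_0)|\nabla u_0|^2\,dx-\tfrac12\langle A^\ve k_\delta'(u^\ve_\delta)\,\partial_t u^\ve_\delta,\,|\nabla u^\ve_\delta|^2\rangle_{G^\ve_T}$; the first term is weakly lower semicontinuous because $\sqrt{k_\delta(u^\ve_\delta(T))}\,\nabla u^\ve_\delta(T)\rightharpoonup\sqrt{k(u^\ve(T))}\,\nabla u^\ve(T)$ weakly in $L^2(G^\ve)$ (the uniform $L^\infty(0,T;H^1)$-bound together with $C([0,T];L^2)$-convergence yields $u^\ve_\delta(T)\rightharpoonup u^\ve(T)$ in $H^1(G^\ve)$, and $\sqrt{k_\delta(u^\ve_\delta(T))}$ converges strongly and boundedly), the second converges, and the remaining term is passed to the limit with the uniform estimates of Lemma~\ref{lem:apriori} — in particular the uniform integrability of the negative power $(u^\ve_\delta+\delta)^{1+\alpha-\beta}$ and the accompanying embeddings exploited there, as in the derivation of the bound on $\nabla\partial_t u^\ve_\delta$ in \eqref{a_priori_1}, following \cite{Mikelic}. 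This is the step where the degeneracy of $k$ is felt and is the technical heart of the argument.

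That $u^\ve(t)\in\mathcal K^\ve$ for $t\in[0,T]$ follows from $u^\ve_\delta(t)\in\mathcal K^\ve$, convexity and closedness of $\mathcal K^\ve$ in $H^1(G^\ve)$, and $u^\ve_\delta(t)\rightharpoonup u^\ve(t)$ in $H^1(G^\ve)$ for a.e.\ $t$ together with continuity of the traces onto $\Gamma^\ve$ and $\partial G$; nonnegativity of $u^\ve$ passes from $u^\ve_\delta\geq 0$ (Lemma~\ref{lem:apriori}) by a.e.\ convergence; $u^\ve(0)=u_0$ follows from $u^\ve_\delta(0)=u_0$ and $u^\ve_\delta\to u^\ve$ in $C([0,T];L^2(G^\ve))$, and the remaining regularity is \eqref{estim:u_ve}. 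For uniqueness I would repeat, now at $\delta=0$, the scheme of the uniqueness part of Lemma~\ref{Lemma_existence_regular}: for two solutions $u^\ve_1$, $u^\ve_2$ of \eqref{main2}, inserting $v=u^\ve_2$ and $v=u^\ve_1$ in the two inequalities and adding yields the analogue of \eqref{Unique_1}--\eqref{Unique_2} with $b$, $k$, $P_c$ instead of $b_\delta$, $k_\delta$, $P_{c,\delta}$; rewriting the pseudoparabolic contribution through $\partial_t\big(A^\ve k(u^\ve_1)|\nabla(u^\ve_1-u^\ve_2)|^2\big)$ and bounding the cross terms exactly as in \eqref{estim_b_22}, \eqref{estim_b_t} and the estimates following them — using Lipschitz continuity of $k$, $F^\ve$, $f^\ve$ (and of $P_c$ when $k$ is constant), the embedding $H^1(G^\ve)\subset L^6(G^\ve)$ for $n\leq 3$, the trace estimate, and the extra regularity $\partial_t u^\ve\in L^2(0,T;W^{1,p_2}(G^\ve))$, $u_0\in W^{1,p_2}(G^\ve)$, $p_2>n$ (respectively $u^\ve\in L^2(0,T;W^{1,p_2}(G^\ve))$ when $k$ is constant and $P_c$ is Lipschitz, cf.\ \eqref{Unique_22}) — then invoking $k(u^\ve_1)\geq k_0>0$ (or $k$ constant), choosing $\tau>0$ small, applying Gronwall's and Poincaré's inequalities, and iterating over $\tau$ gives $u^\ve_1=u^\ve_2$ a.e.\ in $G^\ve_T$.
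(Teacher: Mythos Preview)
Your overall strategy matches the paper's: its proof is essentially one sentence invoking the convergences of Lemma~\ref{lem:conv_delta} and ``taking $\delta\to 0$'', and your $P_\delta(v)/Q_\delta$ decomposition with weak lower semicontinuity of $Q_\delta$ is the natural elaboration of that sentence. Your handling of the $b$-, $P_c$-, $F^\ve$- and $f^\ve$-terms, the verification that $u^\ve(t)\in\mathcal K^\ve$, the initial condition, and the uniqueness argument (which the paper also just says ``follows the same steps as in Lemma~\ref{Lemma_existence_regular}'') are all in line with what the paper does.

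There is, however, a genuine gap in your treatment of the pseudoparabolic diagonal term. After integration by parts you obtain $-\tfrac12\langle A^\ve k_\delta'(u^\ve_\delta)\,\partial_t u^\ve_\delta,\,|\nabla u^\ve_\delta|^2\rangle_{G^\ve_T}$, which carries no sign, and you then assert it ``is passed to the limit with the uniform estimates of Lemma~\ref{lem:apriori}''. That does not follow: $k_\delta'(u^\ve_\delta)\partial_t u^\ve_\delta=\partial_t k_\delta(u^\ve_\delta)$ converges only weakly (and only in $L^p$ with $p<2$), $|\nabla u^\ve_\delta|^2$ converges only weakly in $L^1$, and nothing in Lemma~\ref{lem:apriori} or Lemma~\ref{lem:conv_delta} upgrades either factor to strong convergence; the bound on $(u^\ve_\delta+\delta)^{1+\alpha-\beta}$ was used there to control $\|k_\delta(u^\ve_\delta)^{-1}\|$ and thereby $\|\nabla\partial_t u^\ve_\delta\|_{L^p}$, not to pass such products to the limit. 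The paper does not spell this step out either. At the analogous $\mu\to 0$ passage in Lemma~\ref{Lemma_existence_regular} the difficulty is avoided by the test function $v-u-k(\delta)\int_u^{u_\mu}k_\delta(\xi)^{-1}d\xi$, which replaces the diagonal coefficient $k_\delta(u_\mu)$ by the constant $k(\delta)$ so that only $\tfrac12\partial_t\int A^\ve|\nabla u_\mu|^2$ remains; but that trick degenerates here since $k(0)=0$. To close the argument you must either establish strong convergence of $\nabla u^\ve_\delta$ in $L^2(G^\ve_T)$ (e.g.\ by an energy-difference argument of the type used for $\bar u^N_{\mu,m}$ in Lemma~\ref{Lemma_existence_regular}) or find another reorganisation of the diagonal pseudoparabolic contribution that preserves lower semicontinuity without producing this cross term.
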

\begin{proof}
Using  the  convergence results in Lemma~\ref{lem:conv_delta}, together with   assumptions on  $k$, $P_c$,  $b$,  $H$, $f_0$, and $f_1$,  stated  in Assumption~\ref{assum_1},   and taking $\delta\to 0$ in the regularised problem \eqref{main2_regul},  we obtain that $u^\ve$ satisfies  variational inequality~\eqref{main2}.  The regularity of $u^\ve$ implies  $u^\ve\in C([0,T]; L^2(G^\ve))$ and $u^\ve(t) \to u_0$ in $L^2(G^\ve)$ as $t \to 0$.  The weak convergence of $u^\ve_\delta$ in $L^2(0,T; H^1(G^\ve))$ and non-negativity of $u^\ve_\delta$ in $G^\ve_T$, together with $u^\ve_\delta \in \mathcal K^\ve$,  ensure that   $u^\ve(t,x)\geq 0$ in $(0,T)\times G^\ve$ and on $(0,T)\times\Gamma^\ve$, as well as $u^\ve (t,x)= \kappa_D$ on $(0,T)\times \partial G$. Hence $u^\ve(t) \in \mathcal K^\ve$ for $t\in [0, T]$.   

The proof of the uniqueness result in the case $k$ is nondegenerate or $k(\xi) ={\rm const}$  for $\xi \geq 0$ follows the same steps as the corresponding proof for the regularised problem  \eqref{main2_regul} in Lemma~\ref{Lemma_existence_regular}.
\end{proof}

 \section{Derivation of  macroscopic  obstacle problem} \label{section2}
Using estimates \eqref{estim:u_ve}  and compactness theorems for the two-scale convergence, see e.g.\  \cite{Allaire,  Neuss,  Nguetseng} or Appendix for more details, we obtain the following conver\-gen\-ce results  for a subsequence of the sequence $\{u^\ve\}$ of  solutions  of the microscopic problem~\eqref{main2},  as $\ve \to 0$. 
 \begin{lemma} 
Under assumptions in Lemma~\ref{lem:apriori},  there exist functions  $u \in L^2(0,T; H^1(G))$ and $w\in L^2(G_T; H^1_{\rm per} (Y^\ast)/\mathbb R)$,  with  $\partial_t u \in L^p(0,T; W^{1, p}(G))$  and $\partial_t w \in L^p(G_T; W^{1,p}_{\rm per} (Y^\ast)/\mathbb R)$, such that, up to a subsequence, 
\begin{equation}\label{convergence1}
\begin{aligned}
& u^\ve  \to u \quad &&  \text{strongly in } L^{r_1}((0,T)\times G) \; \; \text{ for } 1< r_1<6, \\
&   b(u^\ve)  \to  b(u)  \quad &&  \text{strongly in } L^{r_2}((0,T)\times G) \; \; \text{ for } 1< r_2<2, \\
 &  k(u^\ve)  \to  k(u) \quad && \text{strongly  in } L^q((0,T)\times G) \; \; \; \text{ for } 1< q<\infty, \\
  & b(u^\ve) \rightharpoonup b(u) && \text{weakly-$\ast$ in } L^\infty(0,T; L^2(G)),  \\
& \partial_t b(u^\ve) \rightharpoonup \partial_t b(u) && \text{weakly in } L^2(0,T; L^r(G)), 
\end{aligned}
\end{equation}
for    $1< r<3/2$ for $n=3$ and $1<r<4/3$ for $n=2$, where  $u^\ve$ is identified with its extension, as in Remark~\ref{remark_extension},   and 
\begin{equation}\label{convergence2}
\begin{aligned}
 & \nabla u^\ve \rightharpoonup \nabla u + \nabla_y w && \text{two-scale} ,  \\
  &  \nabla \partial_t u^\ve \rightharpoonup   \nabla\partial_t  u + \nabla_y \partial_t w && \text{two-scale}, \\
 & k(u^\ve) \nabla \partial_t u^\ve \rightharpoonup k(u) (\nabla\partial_t  u + \nabla_y \partial_t w) && \text{two-scale}, \\
  & k(u^\ve) P_c(u^\ve) \nabla  u^\ve \rightharpoonup k(u)P_c(u) (\nabla   u + \nabla_y  w) \; \; && \text{two-scale}, \\
 & \ve \|u^\ve\|^2_{L^2((0,T)\times \Gamma^\ve)}  \to |Y|^{-1}\|u\|^2_{L^2((0,T)\times G\times \Gamma)},  
\end{aligned}
\end{equation}
as  $\ve \to 0$,   where  exponent $p$ is defined  in \eqref{eq:p}.  
\end{lemma}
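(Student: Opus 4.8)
The plan is to feed the uniform (in $\ve$) a priori estimates \eqref{estim:u_ve} into extension operators and the standard two-scale compactness machinery. First I would extend each $u^\ve$ from $G^\ve$ to $G$ via the operator of Remark~\ref{remark_extension}, so that the extensions (still denoted $u^\ve$) are bounded, uniformly in $\ve$, in $L^\infty(0,T;H^1(G))$ with $\partial_t u^\ve$ bounded in $L^p(0,T;W^{1,p}(G))$ and $b(u^\ve)$, $\partial_t b(u^\ve)$ bounded in $L^\infty(0,T;L^2(G))$ and $L^2(0,T;L^r(G))$ respectively; the Dirichlet condition on $\partial G$ survives the extension. Weak-$\ast$ compactness then produces $u\in L^\infty(0,T;H^1(G))$ with $\partial_t u\in L^p(0,T;W^{1,p}(G))$ and a subsequence along which $u^\ve\rightharpoonup u$ and $\partial_t u^\ve\rightharpoonup\partial_t u$. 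Since $H^1(G)\hookrightarrow H^\sigma(G)$ for $1/2<\sigma<1$ and $H^1(G)\hookrightarrow L^{r_1}(G)$ for $1\le r_1<6$ are compact while $H^1(G)$ embeds continuously into a space containing $W^{1,p}(G)$, the Aubin--Lions--Simon lemma yields strong convergence $u^\ve\to u$ in $L^2(0,T;H^\sigma(G))$ and in $L^{r_1}((0,T)\times G)$, hence $u^\ve\to u$ a.e.\ in $(0,T)\times G$; nonnegativity of $u$ and $u=\kappa_D$ on $(0,T)\times\partial G$ pass to the limit.

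Next I would treat the nonlinearities. Continuity of $b$, $k$, $P_c$ (Assumption~\ref{assum_1}) together with a.e.\ convergence of $u^\ve$ gives $b(u^\ve)\to b(u)$, $k(u^\ve)\to k(u)$, $k(u^\ve)P_c(u^\ve)\to k(u)P_c(u)$ a.e.; the uniform pointwise bounds $|k(u^\ve)|\le C$, $|k(u^\ve)P_c(u^\ve)|\le C$ and $|b(u^\ve)|\le C(1+|u^\ve|^3)$, combined with the strong $L^{r_1}$ bound on $u^\ve$, let the Lebesgue dominated convergence theorem upgrade these to strong convergence in $L^q((0,T)\times G)$ for every finite $q$ and in $L^{r_2}((0,T)\times G)$ for $1<r_2<2$. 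The weak-$\ast$ convergence of $b(u^\ve)$ in $L^\infty(0,T;L^2(G))$ is just the uniform bound in \eqref{estim:u_ve}, and the weak convergence $\partial_t b(u^\ve)\rightharpoonup\partial_t b(u)$ in $L^2(0,T;L^r(G))$ follows from that bound together with the distributional convergence $b(u^\ve)\to b(u)$ already established.

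For the two-scale statements I would invoke the compactness theorem for two-scale convergence of gradients: a sequence bounded in $L^2(0,T;H^1(G))$ has a subsequence and a corrector $w\in L^2(G_T;H^1_{\rm per}(Y^\ast)/\IR)$ with $\nabla u^\ve\rightharpoonup\nabla u+\nabla_y w$ two-scale. Applying the $L^p$ version of the same theorem ($1<p<2$ from \eqref{eq:p}) to $\partial_t u^\ve$, bounded in $L^p(0,T;W^{1,p}(G))$, gives a corrector with $\nabla\partial_t u^\ve\rightharpoonup\nabla\partial_t u+\nabla_y\partial_t w$ two-scale, the identification of this corrector with $\partial_t w$ (so $\partial_t w\in L^p(G_T;W^{1,p}_{\rm per}(Y^\ast)/\IR)$) being obtained by testing both two-scale limits against $\partial_t\varphi(t)\,\phi(x)\,\psi(x/\ve)$ with $\psi$ $Y$-periodic and using that the two-scale limit commutes with $\partial_t$ since $u^\ve$ and its weak time derivative converge two-scale to compatible limits. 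The products $k(u^\ve)\nabla\partial_t u^\ve$ and $k(u^\ve)P_c(u^\ve)\nabla u^\ve$ are then handled by the standard fact that the product of a strongly $L^q$-convergent sequence and a two-scale convergent sequence converges two-scale to the product of the limits; here I must track exponents, since $\nabla\partial_t u^\ve$ lives only in $L^p$ with $p<2$, so the strong $L^q$ convergence of $k(u^\ve)$ for \emph{every} finite $q$ is exactly what closes the Hölder argument. The boundary term is treated with the two-scale convergence theorem on the oscillating surfaces $\Gamma^\ve$ together with the $\ve$-scaled trace inequality from the proof of Lemma~\ref{lem:apriori}: the strong convergence of (the extension of) $u^\ve$ in $L^2(0,T;H^\sigma(G))$ makes its trace on $\Gamma^\ve$ unfold strongly to the trace of $u$ on $G\times\Gamma$, giving $\ve\|u^\ve\|^2_{L^2((0,T)\times\Gamma^\ve)}\to|Y|^{-1}\|u\|^2_{L^2((0,T)\times G\times\Gamma)}$.

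The main obstacle I expect is the compatibility of the corrector between $\nabla u^\ve$ and $\nabla\partial_t u^\ve$ in this degenerate, low-integrability setting: because the pseudoparabolic term is degenerate, $\nabla\partial_t u^\ve$ is controlled only in $L^p$ with $p<2$ (through $\sqrt{k(u^\ve)}\,\nabla\partial_t u^\ve\in L^2$ plus the negative-power bound on $k(u^\ve)$), so one must work with $L^p$ two-scale convergence, verify that the limiting corrector for the time derivative is genuinely $\partial_t w$, and check that $\sqrt{k(u^\ve)}\,\nabla\partial_t u^\ve$ keeps an $L^2$ two-scale limit consistent with $\sqrt{k(u)}\,(\nabla\partial_t u+\nabla_y\partial_t w)$. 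Making these identifications rigorous — rather than the routine compactness extractions and dominated-convergence arguments — is where the real work lies.
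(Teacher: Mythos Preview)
Your proposal is correct and follows essentially the same route as the paper's proof: extend via Remark~\ref{remark_extension}, use Aubin--Lions for strong convergence of $u^\ve$, pass nonlinearities by continuity and dominated convergence, invoke two-scale compactness for gradients, handle products via strong-times-two-scale convergence, and treat the boundary term through the $\ve$-scaled trace estimate $\ve\|v\|^2_{L^2(\Gamma^\ve)}\le C\|v\|^2_{H^\sigma(G^\ve)}$ combined with compactness of $H^1\hookrightarrow H^\sigma$. Your write-up is in fact more detailed than the paper's on the corrector identification for $\nabla\partial_t u^\ve$ --- the paper simply cites two-scale compactness without spelling out why the $L^p$ corrector coincides with $\partial_t w$ --- so the concern you flag at the end is real but is not handled more carefully in the paper either.
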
 
\begin{proof}[Proof. ]
The estimate for $\nabla \partial_t u^\ve$  in \eqref{estim:u_ve}, combined with the Dirichlet boundary condition on $\partial G$ and the Poincar\'e and Sobolev inequalities,  ensures that $\partial_t u^\ve$ and its extension $\partial_t \overline u^\ve$, see Remark~\ref{remark_extension}, satisfy the following estimate
$$
\begin{aligned}
& \|\partial_t u^\ve\|_{L^p(0,T; W^{1,p}(G^\ve))}   +  \|\partial_t \overline u^\ve\|_{L^p(0,T; W^{1,p}(G))} + \|\partial_t u^\ve\|_{L^{p}(0,T; L^{q_2}(G^\ve))} + \|\partial_t \overline u^\ve\|_{L^{p}(0,T; L^{q_2}(G))}  \leq C ,
\end{aligned} 
$$
for $1<p<2$ as in \eqref{eq:p},  $q_2=np/(n-p)$, and  a constant $C>0$ independent of $\ve$. 
Then using Lions-Aubin compactness lemma \cite{Lions} we obtain    strong convergence of  $u^\ve$  in $L^{r_1}((0,T)\times G)$, for $1<r_1<6$. Strong convergence of $u^\ve$, continuity of $k$ and $b$,  bounded\-ness of  $k(u^\ve)$, and estimates for  $b(u^\ve)$   and  $\partial_t b(u^\ve)$   ensure the  strong convergence of  $\{k(u^\ve)\}$ and $\{b(u^\ve)\}$ and weak convergence of $\{\partial_t b(u^\ve)\}$. 
A priori estimates  \eqref{estim:u_ve},  the strong convergence of $u^\ve$,  continuity and boundedness of $k(\xi) $ and $k(\xi)P_c(\xi)$ for $\xi \geq 0$, together with   the compactness theorems for the two-scale convergence, see e.g.\  \cite{Allaire, Neuss, Nguetseng},  imply  the first four convergence results  in  \eqref{convergence2}. 
The last convergence in \eqref{convergence2} follows from  the compactness of the embedding  $H^1(G)\subset H^{\sigma}(G)$ for $1/2 < \sigma <1$  and the estimate 
$$
\ve \|v\|^2_{L^2(\Gamma^\ve)} \leq C \|v\|^2_{H^{\sigma}(G^\ve)}  \qquad   \text{ for } \; \;  \sigma >1/2, 
$$
with  a constant $C>0$  independent of $\ve$, see e.g.\  \cite{Ptashnyk_3} for the proof. 
\end{proof}

\begin{theorem}\label{th:macro} 
Under assumptions in Lemma~\ref{lem:apriori},  a subsequence   of $\{ u^\ve\}$, denoted again by $\{ u^\ve\}$,  where $u^\ve$ are solutions  of  problem \eqref{main2}, 
convergences to a function  $u \in \kappa_D+ L^2(0,T; H^1_0(G))$,  with  $\partial_t u \in L^p(0,T; W^{1,p}(G))$,  $\sqrt{k(u) } \partial_t \nabla u \in L^2(G_T)$,  $\partial_t b(u) \in L^2(0,T;  L^r(G))$, where  $1<r < 3/2$ for $n=3$ and $1< r < 4/3$  for $n=2$, and   $p>1$ is defined in \eqref{eq:p}, and $u(t) \in \mathcal K$ for $t \in [0,T]$,  satis\-fying  macroscopic variational inequality 
\begin{equation} \label{macro_main}
\begin{aligned} 
\langle \partial_t b(u), v - u \rangle_{G_T}   +\big \langle A_{\rm hom} k(u) [ P_c(u)  \nabla u +   \partial_t \nabla u] , \nabla(v - u) \big\rangle_{G_T} \quad \\   -
\langle   F_{\rm hom}(t,x,u) ,  \nabla (v - u) \rangle_{G_T}   + 
\langle  f_{\rm hom}(t, u), v - u \rangle_{G_T}  \geq 0
\end{aligned} 
 \end{equation}
 for $v- \kappa_D \in L^2(0,T; H^1_0(G))$, with  $v(t) \in \mathcal K$, where $ \mathcal K$ is defined in   \eqref{K_macro}, 
 $$
 \begin{aligned}
 F_{\rm hom}(t,x,u)  & = \dashint_{Y^\ast} Q(t,x,y)\,  dy \, H(u) + k(u) g,  \\
 f_{\rm hom}(t, u) & = \dashint_{Y^\ast} f_0(t,y)\, dy\,  f_1(u), 
 \end{aligned}
 $$
 and matrix $A_{\rm hom}$ is defined  in \eqref{matrix_A}.
 
 If $k(\xi) = {\rm const}$, $P_c(\xi)$ is Lipschitz continuous for $\xi \geq 0$,  and $u \in L^2(0, T; W^{1,p_2}(G))$ for $p_2>n$ or if $k(u)\geq \delta >0$ for $u\geq 0$, $\partial_t u \in L^2(0, T; W^{1,p_2}(G))$ and $u_0 \in W^{1, p_2}(G)$,  then variational inequality \eqref{macro_main} has a unique solution and the whole sequence of microscopic solutions $\{ u^\ve\}$ converges to the solution of  \eqref{macro_main}.  
\end{theorem}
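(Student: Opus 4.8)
The plan is to homogenise \eqref{main2} by passing to the two-scale limit in the variational inequality, using the convergence results \eqref{convergence1}--\eqref{convergence2} of the preceding lemma, and then to eliminate the corrector $w$ through the cell problems, so as to rewrite the limit inequality purely in terms of $u$. A preliminary point is the construction of admissible oscillating test functions. Since $0<\kappa_D\le 1$, the set $\{v\in\mathcal K:\, v\ge c \text{ in } G \text{ for some } c>0\}$ is dense in $\mathcal K$ in $H^1(G)$ — for $v\in\mathcal K$ the functions $(1-\tfrac1n)v+\tfrac1n\kappa_D$ belong to $\mathcal K$, are bounded below by $\tfrac1n\kappa_D$, equal $\kappa_D$ on $\partial G$, and converge to $v$ in $H^1(G)$ — so, since each term of \eqref{macro_main} depends continuously and affinely on $v$, it suffices to test it with smooth $v$ such that $v(t)\in\mathcal K$, $v\ge c>0$ and $v-\kappa_D\in L^2(0,T;H^1_0(G))$. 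For such a $v$ and any $\psi_1\in C^\infty_0(G_T;C^\infty_{\rm per}(Y^\ast))$ I set $v^\ve(t,x)=v(t,x)+\ve\,\psi_1(t,x,x/\ve)$; then $v^\ve=\kappa_D$ on $\partial G$, $v^\ve\ge c-\ve\|\psi_1\|_{L^\infty}>0$ on $\Gamma^\ve$ for $\ve$ small, so $v^\ve\in L^2(0,T;\mathcal K^\ve)$ is admissible in \eqref{main2}, and $v^\ve\to v$, $\nabla v^\ve\to\nabla v+\nabla_y\psi_1$ strongly two-scale.

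Next I would insert $v^\ve$ in \eqref{main2} and pass to the limit term by term. The parts linear in the test function, namely $\la A^\ve(x)k(u^\ve)[P_c(u^\ve)\nabla u^\ve+\partial_t\nabla u^\ve],\nabla v^\ve\ra_{G^\ve_T}$, $\la F^\ve(t,x,u^\ve),\nabla v^\ve\ra_{G^\ve_T}$ and the surface term $\la\ve f^\ve(t,x,u^\ve),v^\ve-u^\ve\ra_{\Gamma^\ve_T}$, converge from the two-scale convergences of $k(u^\ve)P_c(u^\ve)\nabla u^\ve$, $k(u^\ve)\partial_t\nabla u^\ve$, $Q^\ve$, the strong convergences of $u^\ve$, $k(u^\ve)$, the strong two-scale convergence of $\nabla v^\ve$, and the trace convergence in \eqref{convergence2} (this is what produces $F_{\rm hom}$ and $f_{\rm hom}$). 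The term $\la\partial_t b(u^\ve),v^\ve-u^\ve\ra_{G^\ve_T}$ is handled by integration by parts in time, using $\partial_t b(u^\ve)\rightharpoonup\partial_t b(u)$, $b(u^\ve)\to b(u)$ strongly, the convexity of $z\mapsto\int_0^z b$ for the final-time term, and $u^\ve(0)=u_0$. The delicate pieces are the two ``self'' contributions $-\la A^\ve k(u^\ve)P_c(u^\ve)\nabla u^\ve,\nabla u^\ve\ra_{G^\ve_T}$ and $-\la A^\ve k(u^\ve)\partial_t\nabla u^\ve,\nabla u^\ve\ra_{G^\ve_T}$. The first is nonnegative, and the lower semicontinuity of quadratic forms under two-scale convergence, applied to $\sqrt{k(u^\ve)P_c(u^\ve)}\,\nabla u^\ve$ with the positive weight $A(y)$, gives the correct $\liminf$-bound in terms of $\nabla u+\nabla_y w$. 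For the second, following \cite{Mikelic}, I would use the identity $A^\ve k(u^\ve)\partial_t\nabla u^\ve\cdot\nabla u^\ve=\tfrac12 A^\ve\partial_t\big(k(u^\ve)|\nabla u^\ve|^2\big)-\tfrac12 A^\ve k'(u^\ve)\partial_t u^\ve|\nabla u^\ve|^2$, integrate the first summand in time (the required weak continuity of $\nabla u^\ve$ into $L^2(G^\ve)$ follows from $\nabla u^\ve\in L^\infty(0,T;L^2)$ and $\sqrt{k(u^\ve)}\partial_t\nabla u^\ve\in L^2$), apply two-scale lower semicontinuity to $\nabla u^\ve(T)$ together with the strong convergence of $k(u^\ve(T))$, and pass to the limit in the $k'(u^\ve)\partial_t u^\ve|\nabla u^\ve|^2$ integral by the strong convergence of $u^\ve$ and the a priori bounds. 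This yields a limit inequality for the pair $(u,w)$, valid for all admissible $v$ and $\psi_1$, hence, by continuity and density, for every $v$ with $v(t)\in\mathcal K$ and every $\psi\in L^2(G_T;H^1_{\rm per}(Y^\ast)/\mathbb R)$ in the corrector slot.

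Choosing $v=u$ and replacing the corrector by $w\pm\lambda\phi$ and letting $\lambda\to 0$ yields the Neumann cell identity
\[
\int_{Y^\ast}A(y)k(u)\big[P_c(u)(\nabla u+\nabla_y w)+\partial_t(\nabla u+\nabla_y w)\big]\cdot\nabla_y\phi\,dy=0
\]
for a.e.\ $(t,x)$ and all $\phi\in H^1_{\rm per}(Y^\ast)$. Since $k(u)$ and $P_c(u)$ do not depend on $y$ — equivalently, $P_c(u^\ve)\nabla u^\ve+\partial_t\nabla u^\ve=\nabla\big(\Psi(u^\ve)+\partial_t u^\ve\big)$ with $\Psi'=P_c$ — the corrector is generated by the $n$ classical perforated-cell correctors $\chi^j$, $-\nabla_y\!\cdot\!(A(y)(e_j+\nabla_y\chi^j))=0$ in $Y^\ast$, $A(y)(e_j+\nabla_y\chi^j)\cdot\nu=0$ on $\Gamma$, $Y$-periodic, and $\dashint_{Y^\ast}A(y)\big(P_c(u)(\nabla u+\nabla_y w)+\partial_t(\nabla u+\nabla_y w)\big)dy=A_{\rm hom}\big(P_c(u)\nabla u+\partial_t\nabla u\big)$ with $A_{\rm hom}$ as in \eqref{matrix_A}. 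Substituting $\psi=w$ and this identity into the limit inequality and dividing through by the volume fraction $|Y^\ast|/|Y|$ collapses the $y$-integral and produces \eqref{macro_main}. The membership $u(t)\in\mathcal K$ follows because $u^\ve\ge0$ in $G^\ve_T$ (Lemma~\ref{lem:apriori}) and $u^\ve=\kappa_D$ on $\partial G$ survive the strong $L^{r_1}$-convergence of the extensions (Remark~\ref{remark_extension}) and the trace, and the asserted regularity of $u$ is inherited from \eqref{estim:u_ve} by lower semicontinuity of norms.

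Finally, under either set of hypotheses uniqueness for \eqref{macro_main} is obtained exactly as for \eqref{main2_regul} in Lemma~\ref{Lemma_existence_regular}: test the inequality for each solution with the other, subtract, use the symmetry and positive definiteness of $A_{\rm hom}$, the monotonicity of $b$ with the embedding $H^1(G)\subset L^6(G)$ to absorb the $b'$-terms, the Lipschitz continuity of $k$, $P_c$, $F_{\rm hom}$, $f_{\rm hom}$, and the extra regularity $\partial_t u\in L^2(0,T;W^{1,p_2}(G))$ (respectively $u\in L^2(0,T;W^{1,p_2}(G))$, $p_2>n$) to control the $\big(k(u_1)-k(u_2)\big)\partial_t\nabla u_2$ (respectively $\big(P_c(u_1)-P_c(u_2)\big)\nabla u_2$) term, then Gronwall on a short time interval and iteration. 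Uniqueness of the limit upgrades the subsequential convergence to convergence of the whole family $\{u^\ve\}$. The main obstacle throughout is the two-scale passage to the limit in the degenerate pseudoparabolic self-term $\la A^\ve k(u^\ve)\partial_t\nabla u^\ve,\nabla u^\ve\ra_{G^\ve_T}$: two-scale lower semicontinuity alone does not apply because of the time derivative, so it must be combined with the time integration by parts of \cite{Mikelic} and lower semicontinuity at the final time, keeping in mind that $u_0$ carries no $y$-oscillation whereas the corrector $w$ is already present for $t>0$.
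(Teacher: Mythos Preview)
Your overall strategy---pass to the two-scale limit in the microscopic inequality, separate the cell problem from the macroscopic inequality, and then eliminate the corrector via \eqref{macro_unit1}---coincides with the paper's, but you diverge at the decisive step of choosing the oscillating test function. You take $v^\ve=v+\ve\psi_1$ with a fixed $v\in\mathcal K$, which forces you to confront the ``self'' terms $-\la A^\ve k(u^\ve)P_c(u^\ve)\nabla u^\ve,\nabla u^\ve\ra_{G^\ve_T}$ and $-\la A^\ve k(u^\ve)\partial_t\nabla u^\ve,\nabla u^\ve\ra_{G^\ve_T}$. The paper instead takes
\[
v^\ve(t,x)=u^\ve(t,x)+\phi(t,x)+\sigma(\ve)\varphi(t,x)+\ve\psi(t,x,x/\ve),
\]
with $\phi+u\ge 0$, $\varphi\ge 0$, $\sigma(\ve)\to 0$ (the $\sigma(\ve)\varphi$ term compensates the possible negativity of $\ve\psi$ on $\Gamma^\ve$, using the strong two-scale trace convergence of $u^\ve$). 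Then $v^\ve-u^\ve=\phi+\sigma(\ve)\varphi+\ve\psi$ is \emph{independent of $u^\ve$}, every term in \eqref{main2} becomes a pairing of a weakly two-scale convergent sequence against a strongly two-scale convergent test, and the limit passes directly without any lower-semicontinuity argument. Setting $\phi=0$ and using $\pm\psi$ yields the cell equation \eqref{unit_cell_11} as an equality; setting $\psi=0$ and then $\phi=v-u$ gives \eqref{macro_main}.

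Your handling of the pseudoparabolic self-term has a genuine gap. After integrating in time you get the final-time piece (for which two-scale lower semicontinuity is indeed in the right direction) and the term
\[
-\tfrac12\int_{G^\ve_T} A^\ve(x)\,k'(u^\ve)\,\partial_t u^\ve\,|\nabla u^\ve|^2\,dx\,dt.
\]
You assert that one can ``pass to the limit by the strong convergence of $u^\ve$ and the a priori bounds,'' but strong convergence of $u^\ve$ says nothing about $|\nabla u^\ve|^2$: only weak two-scale convergence of $\nabla u^\ve$ is available, and $k'(u^\ve)\partial_t u^\ve$ carries no sign (for $\partial_t u^\ve$ you have only weak convergence in $L^p(0,T;W^{1,p})$, not strong). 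Hence neither a lower-semicontinuity argument nor a weak--strong product argument applies to this term, and to identify its limit you would need strong two-scale convergence of $\nabla u^\ve$, which is nowhere established. This is exactly the obstruction the paper's choice of test function sidesteps. The remainder of your outline (derivation of the cell problem, the ansatz for $w$, the uniqueness argument by the method of Lemma~\ref{Lemma_existence_regular}) matches the paper.
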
 

\begin{proof}[Proof.] To derive  macroscopic inequality  \eqref{macro_main} we consider  
$$v^\ve(t,x)= u^\ve(t,x) + \phi(t,x) + \sigma(\ve) \varphi(t,x) +  \ve \psi(t,x, x/\ve) $$ 
as a test function in \eqref{main2}, where  $\psi \in C^1_0(G_T, C^1_{\rm per} (Y))$,  $\phi, \varphi \in H^1_0((0,T)\times G)$,  with  $\phi(t,x) + u(t,x) \geq 0$ and  $\varphi(t,x)\geq 0$  in $(0,T)\times G$, and  $\sigma(\ve) \to 0$ as $\ve \to 0$. Notice that since $u^\ve \to u$  strongly two-scale on $(0,T)\times \Gamma^\ve$ as $\ve \to 0$, there exist such functions $\varphi$  and $\sigma(\ve) >0$  that $v^\ve(t,x) \geq 0 $ on $(0,T)\times \Gamma^\ve$ for sufficiently small $\ve>0$. We also have that $v^\ve(t,x) = \kappa_D$ on $(0,T)\times\partial G$.  Then using the   convergence results in \eqref{convergence1} and \eqref{convergence2} and taking  in \eqref{main2} the limit as $\ve \to 0$  yield
\begin{eqnarray}
 |Y^\ast|  \int_{G_T} \partial_t b(u)   \phi \, dx dt + \int_{G_T}   \int_{Y^\ast}   A(y) k(u)\big[\partial_t ( \nabla u + \nabla_y w)  +  P_c(u)  (\nabla  u + \nabla_y w)\big]\big( \nabla \phi+ \nabla_y \psi\big) dy dx dt  \nonumber\\
 - \int_{G_T}  \int_{Y^\ast}  F(t,x, y, u) (\nabla \phi  + \nabla_y \psi) dy dx dt      
    + \int_{G_T}  \int_{\Gamma}  f(t, y, u) \, \phi \, d\gamma_y dx dt \geq 0. \nonumber
\end{eqnarray}  
Assumptions on $F^\ve$, i.e.\ $\nabla \cdot Q^\ve(t,x) =0$ in $G^\ve_T$ and $Q^\ve(t,x)\cdot \nu =0$ on $\Gamma^\ve_T$, which imply that $\nabla_y \cdot Q(t,x,y) =0$ in $G_T\times Y^\ast$,   $Q(t,x,y)\cdot \nu =0$ on  $G_T\times \Gamma$,  and $Q$ is $Y$-periodic, and the fact that $u$ is independent of $y$ ensure 
$$
 \int_{G_T}\int_{Y^\ast}  F(t,x,y, u)  \nabla_y \psi \,  dy dx dt =0.
$$
By choosing $\phi=0$ and $\psi=0$, respectively, we obtain 
 \begin{equation}\label{macro_ineq_w}
  \int_{G_T}  \int_{Y^\ast}  A(y)  k(u) \left [ \partial_t ( \nabla u + \nabla_y w)  + P_c(u)  (\nabla  u + \nabla_y w)\right]  \nabla_y \psi  \, dy  dx dt \geq 0 
\end{equation}
and 
 \begin{equation}\label{macro_ineq}
\begin{aligned} 
 \int_{G_T}\partial_t  b(u) \,  \phi \, dx dt  + \int_{G_T}\dashint_{Y^\ast}  A(y) k(u) \left [ \partial_t ( \nabla u + \nabla_y w)  +  P_c(u)  (\nabla  u + \nabla_y w)\right] \nabla \phi \, dy dx dt  \qquad 
\\  - \int_{G_T}\dashint_{Y^\ast} F(t, x, y, u)dy \, \nabla \phi  \,  dx dt   + \int_{G_T}\frac 1 { |Y^\ast|}\int_{\Gamma}  f(t, y, u) \, d\gamma_y \,  \phi \, dx dt \geq 0. 
\end{aligned} 
\end{equation} 
Considering   $\pm \psi$ in \eqref{macro_ineq_w} yields
\begin{equation*}\label{unit_cell_1}
\begin{aligned} 
&   \int_{G_T} \int_{Y^\ast} A(y) k(u)\left [ \partial_t ( \nabla u + \nabla_y w)  + P_c(u) (\nabla  u + \nabla_y w)\right]  \nabla_y \psi  \, dy dx dt  = 0, 
\end{aligned} 
\end{equation*} 
for all $\psi \in C^1_0(G_T; C^1_{\rm per}(Y))$. For a give $u\in L^2(0,T; H^1(G))$,  the last equation  is a pseudoparabolic  equa\-tion for $w$ with respect to microscopic  variables  $y$:  
\begin{equation}\label{unit_cell_11}
\begin{aligned} 
  \nabla_y\cdot \big(A(y) k(u) [\partial_t (\nabla u+ \nabla_y w)  + P_c(u)(\nabla u+  \nabla_y w)] \big)
 =  0 \; \; \; & \text{ in } \; Y^\ast_T, \\
A(y) k(u) [\partial_t (\nabla u+ \nabla_y w)  + P_c(u)(\nabla u+  \nabla_y w)]  \cdot \nu = 0  \; \; \;  & \text{ on } \Gamma_T, \\
  w \qquad  Y-\text{periodic}, 
\end{aligned} 
\end{equation} 
for  $x\in G$, where $Y^\ast_T = (0,T)\times Y^\ast$. Using a regularisation of $k$ and $P_c$, in a similar way as for \eqref{main2},  we can show the existence of a solution of problem~\eqref{unit_cell_11},  see also the existence proof for \eqref{macro_unit22}  in Lemma~\ref{exist_unit_cell}.  To prove the existence of a solution of \eqref{unit_cell_11}, with regularized  $k$ and $P_c$,   we apply the Rothe method, use the Lax-Milgram theorem for the resulting linear elliptic problem, and  consider $w\, [k(u+\delta)]^{-1}$ and $\partial_t w$ as test functions to derive the corresponding a priori estimates.  We also use the fact that 
$\nabla u \in L^2((0, T)\times G)$,  $k(u) \partial_t \nabla u \in L^2((0,T)\times G)$, and $k(u) P_c(u)$  is bounded. 
Considering the equation for the difference of two solutions $w_1$ and $w_2$ of \eqref{unit_cell_11},  taking $\psi = (w_1 - w_2)\, [k(u+\delta)]^{-1}$, with $\delta>0$,  as a test function, using  assumptions on $A$, and letting $\delta \to 0$, yield 
$$
\|\nabla_y(w_1 - w_2)\|_{L^\infty(0, T; L^2(G\times Y^\ast))} =0.
$$
 Hence a solution of \eqref{unit_cell_11} is defined uniquely up to an additive function independent of $y$. The structure of  \eqref{unit_cell_11} suggests  that  $w$ is of the form 
\begin{equation}\label{form_w}
w(t,x,y)= \sum_{j=1}^n \partial_{x_j}  u(t,x)\,  \omega^j (y) + \overline w(t,x),  
\end{equation}
where $\omega^j$, for $j=1,\ldots, n$,  satisfy   the following `unit cell' problems 
\begin{equation}\label{macro_unit1}
\begin{aligned} 
& {\rm div}_y ( A(y) (\nabla_y \omega^j + e_j)) = 0 \; \;  &&  \text{ in } Y^\ast,  &&  \int_{Y^\ast} \omega^j(y) dy =0, \\
&  A(y) (\nabla_y \omega^j + e_j)\cdot \nu = 0  &&\text{ on } \Gamma, \; \;  && \omega^j  \;  \; Y-\text{periodic}, 
\end{aligned} 
\end{equation}
with $\{e_j\}_{j=1, \ldots, n}$ being the standard  basis  of $\mathbb R^n$. Notice that the well-posedness of \eqref{macro_unit1} follows directly  from the assumptions on $A$ in Assumption~\ref{assum_1}.  \\
Substituting expression \eqref{form_w} for $w$  into \eqref{macro_ineq}  determines the  matrix  $A_{\rm hom} = (A_{\rm hom}^{ij})_{i,j=1,\ldots, n}$,  with  
\begin{equation}\label{matrix_A}
A_{\rm hom}^{ij} = \dashint_{Y^\ast} A(y)\left(\delta_{ij} + \frac{\partial \omega^j}{\partial {y_i}} \right) dy.
\end{equation}
For any $\psi \in C_0(G_T, C_{\rm per}(\Gamma))$, with $\psi(t,x,y) \geq 0$ in $(0,T)\times G\times \Gamma$, using the non-negativity and   two-scale convergence of $u^\ve$ on  $\Gamma^\ve$,  we obtain 
$$
\begin{aligned} 
0\leq  \lim\limits_{\ve \to 0} \ve \langle u^\ve(t,x), \psi(t,x,x/\ve)\rangle_{\Gamma^\ve_T} = |Y|^{-1}\langle u(t,x), \psi(t,x,y)\rangle_{G_T\times \Gamma} = \langle u(t,x), \overline \psi(t,x)\rangle_{G_T},  
\end{aligned}
$$
where 
$$
\overline \psi(t,x) = \frac 1 {|Y|} \int_\Gamma \psi(t,x,y) d\gamma_y \geq 0 \quad \text{ in } \; (0,T)\times G.
$$ 
Hence  $u(t,x) \geq 0$  in $(0,T)\times G$.  The weak convergence in $L^2(0,T; H^1(G))$ of the extension  $\overline u^\ve$ of $u^\ve$,  see Remark~\ref{remark_extension},  ensures that $u(t,x) = \kappa_D$ on $(0,T)\times \partial G$. Thus we have that  $u(t) \in \mathcal K$ for $t\in [0, T]$. 

 Considering $\phi = v-u$,   for any $v\in \kappa_D+ L^2(0,T; H^1_0(G))$ with  $v(t,x) \geq 0$ in $(0,T)\times G$,    as a test function  in \eqref{macro_ineq}     yields  the macroscopic variational inequality \eqref{macro_main}.

 The proof of the uniqueness result  follows the same steps as  the proof of the uniqueness result for the regularised problem \eqref{main2_regul}  in Lemma~\ref{Lemma_existence_regular}.
\end{proof}

\noindent {\bf Remark.} Notice that if  in pseudoparabolic and elliptic terms we  have two  different functions depending on micro\-scopic variables  $y$, i.e.\  $A(y) k(u) \nabla \partial_t u$ and $B(y) k(u) P_c(u) \nabla u$, with $0 < a_0 \leq A(y)\leq A_0 < \infty$ and $0<b_0 \leq B(y)\leq B_0< \infty$, we  need to consider  a modified form for   function $w$, i.e.\ 
\begin{equation}\label{w_new}
\begin{aligned}
w(t,x,y)= \sum_{j=1}^n \frac{\partial u(t,x)}{\partial x_j}   \vartheta^j (y)  + \sum_{j=1}^n \int_0^t \frac{ \partial^2 u(s,x)}{\partial s\partial x_j}   \chi^j(t-s, x, y)  ds  + \overline w(t,x), 
\end{aligned}
\end{equation}
instead of \eqref{form_w},  where $\vartheta^j$ and $\chi^j$ satisfy  the following `unit cell' problems:
\begin{equation}\label{unit_omega_2}
\begin{aligned} 
& {\rm div}_y ( B(y) (\nabla_y \vartheta^j + e_j)) = 0 \quad &&  \text{ in } Y^\ast,  && \int_{Y^\ast} \vartheta^j(y) dy =0, \\
&  B(y) (\nabla_y \vartheta^j + e_j)\cdot \nu = 0  &&\text{ on } \Gamma, \quad && \vartheta^j \;  \;  \; Y-\text{periodic},  
\end{aligned} 
\end{equation}
and  
\begin{eqnarray} \label{macro_unit22}
\begin{aligned}
  {\rm div}_y \left( k(u(t+s))\left[A(y)\nabla_y \partial_t  \chi^j + B(y) P_c(u(t+s)) \nabla_y \chi^j  \right ]\right)  &= 0  \quad  \;\;   \text{in } Y^\ast_{T-s},\;    \\
  k(u(t+s)) [A(y)\nabla_y \partial_t \chi^j  + B(y)P_c(u(t+s)) \nabla_y \chi^j ] \cdot \nu &= 0 \quad \;  \; \text{on } \Gamma_{T-s}, \;     \\
 \chi^j   \qquad   & \qquad \;  \quad  Y-\text{periodic}, \\
  \chi^j(0, x,y) = \omega^j(y) - \vartheta^j(y)  &\quad \;  \qquad   \text{in } Y^\ast, \; \; \;   \int_{Y^\ast} \chi^j (t,x,y) dy = 0, 
  \end{aligned}
\end{eqnarray} 
for $s\in [0,T)$, $x\in G$,  and $j=1,\ldots, n$,  with  $\omega^j$ satisfying  \eqref{macro_unit1}.

The well-posedness of  \eqref{macro_unit1} and \eqref{unit_omega_2} follows from the strict positivity and boundedness of functions $A$ and $B$. 
 To show the well-posedness of  \eqref{macro_unit22}  we first consider the regularised problem  
\begin{equation}\label{macro_unit2_reg}
\begin{aligned} 
&  {\rm div}_y \left(  k(u+\delta)\left[A(y) \nabla_y \partial_t  \chi^j_\delta + B(y) P_c(u+\delta) \nabla_y \chi^j_\delta  \right ]\right) = 0 \;   && \text{in } Y^\ast_{T-s}, \\
&   k(u+\delta) [A(y)\nabla_y \partial_t \chi^j_\delta  + B(y)  P_c(u+\delta) \nabla_y \chi^j_\delta ] \cdot \nu = 0 && \text{on } \Gamma_{T-s}, \\
& \chi^j_\delta               &&   Y-\text{periodic}, \\
&  \chi^j_\delta(0,x,y) = \omega^j(y) - \vartheta^j(y)  \;  && \text{in } Y^\ast, \; \; \int_{Y^\ast} \chi_\delta^j (t,x,y) dy = 0 .
\end{aligned} 
\end{equation}  

\begin{lemma} \label{exist_unit_cell}
Under assumptions on $A$ and $B$ and on nonlinear functions $k$ and $P_c$, see Assumption~\ref{assum_1},  there exists a unique solution $\chi^j \in L^\infty((0,T-s)\times G; H^1_{\rm per}(Y^\ast))$ of \eqref{macro_unit22},  with  $\sqrt{k(u)} \, \partial_t \chi^j \in L^2((0,T-s)\times G; H^1_{\rm per}(Y^\ast))$, for each $j=1, \ldots, n$ and $s\in [0, T)$.  
\end{lemma}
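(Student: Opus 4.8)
The plan is to treat $t\in(0,T-s)$ and $x\in G$ as parameters entering only through the coefficients $k(u(t+s,x))$ and $P_c(u(t+s,x))$, to first solve the regularised problem~\eqref{macro_unit2_reg} for a fixed $\delta>0$ by the Rothe method, to derive a priori estimates uniform in $\delta$, and then to pass to the limit $\delta\to0$; uniqueness follows from a standard energy argument. The main difficulty is the degeneracy of $k(u)$, which multiplies both the pseudoparabolic and the elliptic term and so rules out a direct energy bound for $\partial_t\chi^j$; as in the proof of Lemma~\ref{lem:apriori}, this is handled by testing with $\chi_\delta/k(u+\delta)$ and with $\partial_t\chi_\delta$, keeping the weight $\sqrt{k(u+\delta)}$ on the time derivative. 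Throughout I suppress the indices $j$ and $s$.

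\emph{Existence for \eqref{macro_unit2_reg}.} Discretising in time with step $h=(T-s)/N$, $t_i=ih$, $\chi^i_\delta:=\chi_\delta(t_i,\cdot)$, one obtains at each step and each $x\in G$ the linear elliptic problem
\begin{equation*}
-\nabla_y\!\cdot\!\Big(k(u(t_i)+\delta)\big[\tfrac1h A(y)\nabla_y(\chi^i_\delta-\chi^{i-1}_\delta)+B(y)P_c(u(t_i)+\delta)\nabla_y\chi^i_\delta\big]\Big)=0 \quad\text{in } Y^\ast,
\end{equation*}
with the no-flux condition on $\Gamma$, $Y$-periodicity, $\int_{Y^\ast}\chi^i_\delta\,dy=0$ and $\chi^0_\delta=\omega^j-\vartheta^j$. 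Since $k(u+\delta)\ge k(\delta)>0$, $A\ge a_0$, $B\ge b_0$ and $P_c(u+\delta)\ge 0$, the associated bilinear form is coercive and bounded on $H^1_{\rm per}(Y^\ast)/\mathbb R$, so the Lax--Milgram theorem yields a unique $\chi^i_\delta$, measurable in $(t,x)$. Testing the $i$-th equation with $\chi^i_\delta/k(u(t_i)+\delta)$ --- whereby the factor $k$ cancels and, $A$, $B$, $1/k$ being independent of $y$, $\nabla_y(\chi^i_\delta/k)=\nabla_y\chi^i_\delta/k$ --- summing over $i$ and using $A\ge a_0$ and $BP_c\ge0$ gives
\begin{equation*}
\sup_i\|\nabla_y\chi^i_\delta\|^2_{L^2(G\times Y^\ast)}+\sum_i h\,\|\sqrt{P_c(u(t_i)+\delta)}\,\nabla_y\chi^i_\delta\|^2_{L^2(G\times Y^\ast)}\le C\,\|\nabla_y(\omega^j-\vartheta^j)\|^2_{L^2(Y^\ast)};
\end{equation*}
testing next with $(\chi^i_\delta-\chi^{i-1}_\delta)/h$, using $A\ge a_0$ on the pseudoparabolic term and writing $kBP_c\nabla_y\chi^i_\delta\cdot\nabla_y(\chi^i_\delta-\chi^{i-1}_\delta)=\sqrt k\,BP_c\nabla_y\chi^i_\delta\cdot\sqrt k\,\nabla_y(\chi^i_\delta-\chi^{i-1}_\delta)$ and then absorbing via $kP_c\le C$, boundedness of $B$ and the previous bound, gives $\sum_i h\,\|\sqrt{k(u(t_i)+\delta)}\,h^{-1}\nabla_y(\chi^i_\delta-\chi^{i-1}_\delta)\|^2_{L^2(G\times Y^\ast)}\le C$, with $C$ independent of $h$ and $\delta$. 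Letting $h\to0$ through the piecewise linear and constant interpolants and using lower semicontinuity of the norm yields $\chi_\delta\in L^\infty((0,T-s)\times G;H^1_{\rm per}(Y^\ast)/\mathbb R)$ solving \eqref{macro_unit2_reg}, with $\sqrt{k(u+\delta)}\,\partial_t\nabla_y\chi_\delta\in L^2$ and, by the Poincar\'e inequality on $Y^\ast/\mathbb R$, $\sqrt{k(u+\delta)}\,\partial_t\chi_\delta\in L^2((0,T-s)\times G;H^1_{\rm per}(Y^\ast))$, all bounds uniform in $\delta$.

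\emph{Passage to the limit $\delta\to0$.} Since $k$ is nondecreasing, $k(u+\delta)\ge k(u)$, so $k(u+\delta)^{-p/(2-p)}\le k(u)^{-p/(2-p)}\in L^1$ for the exponent $p$ of \eqref{eq:p}, and, exactly as in Lemma~\ref{lem:apriori}, $\|\nabla_y\partial_t\chi_\delta\|_{L^p}\le C$ uniformly in $\delta$. Up to a subsequence $\chi_\delta\rightharpoonup\chi^j$ weakly-$\ast$ in $L^\infty((0,T-s)\times G;H^1_{\rm per}(Y^\ast)/\mathbb R)$, $\nabla_y\partial_t\chi_\delta\rightharpoonup\nabla_y\partial_t\chi^j$ in $L^p$, and $\sqrt{k(u+\delta)}\,\nabla_y\partial_t\chi_\delta\rightharpoonup\sqrt{k(u)}\,\nabla_y\partial_t\chi^j$ in $L^2$, the last limit being identified through the strong convergence $\sqrt{k(u+\delta)}\to\sqrt{k(u)}$ and the $L^p$ bound as in Lemma~\ref{lem:conv_delta}. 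Together with $k(u+\delta)\to k(u)$ and $k(u+\delta)P_c(u+\delta)\to k(u)P_c(u)$ strongly in every $L^q$, $q<\infty$ (dominated convergence, using $kP_c\le C$), this allows passage to the limit in the weak form of \eqref{macro_unit2_reg} and yields \eqref{macro_unit22}; the initial condition is preserved because all $\chi_\delta$ equal $\omega^j-\vartheta^j$ at $t=0$ and are equicontinuous in $t$ with values in $L^2(G\times Y^\ast)$.

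\emph{Uniqueness.} If $\chi_1,\chi_2$ solve \eqref{macro_unit22}, their difference solves the same problem with zero initial datum; testing with $(\chi_1-\chi_2)/k(u+\delta)$, using $A\ge a_0$, boundedness of $k(u)P_c(u)$ and Lipschitz continuity of $P_c$, integrating by parts in the pseudoparabolic term and letting $\delta\to0$ gives $\|\nabla_y(\chi_1-\chi_2)\|_{L^\infty(0,T-s;L^2(G\times Y^\ast))}=0$, which with $\int_{Y^\ast}\chi_i\,dy=0$ and the Poincar\'e inequality forces $\chi_1=\chi_2$.
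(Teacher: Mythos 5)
Your proposal follows essentially the same route as the paper: regularise, discretise in time and apply Lax--Milgram on the zero-mean periodic space, obtain the $\delta$-uniform estimates from the test functions $\chi_\delta/k(u+\delta)$ and $\partial_t\chi_\delta$, pass to the limit, and prove uniqueness by testing the difference with $(\chi_1-\chi_2)/k(u+\delta)$ and letting $\delta\to0$. The one sub-step you handle differently is the identification of the weak $L^2$ limit of $\sqrt{k(u+\delta)}\,\nabla_y\partial_t\chi_\delta$: the paper integrates by parts in time against smooth test functions, producing the term $k^\prime(u+\delta)\partial_t u\,\nabla_y\chi_\delta$ and using $\partial_t u\in L^p(0,T;L^{q_1}(G))$ together with the extra assumption $k\in C^1$, whereas you re-run the negative-power H\"older trick of Lemma~\ref{lem:apriori} to get a $\delta$-uniform $L^p$ bound on $\nabla_y\partial_t\chi_\delta$; this is a legitimate alternative, but it rests on $k(u)^{-p/(2-p)}\in L^1(G_T)$ for the \emph{limit} function $u$, which you should justify (it follows by Fatou from the uniform bounds on $(u^\ve_\delta+\delta)^{1+\alpha-\beta}$ and $(u^\ve_\delta+\delta)^{1-\lambda/2}$, but is not stated in the paper). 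Two small slips worth fixing: $A(y)$ and $B(y)$ are \emph{not} independent of $y$ --- only the $y$-independence of $k(u(t_i)+\delta)$ is needed for $\nabla_y(\chi^i_\delta/k)=\nabla_y\chi^i_\delta/k$ --- and Lipschitz continuity of $P_c$ is neither assumed nor needed in the uniqueness step, since the problem is linear in $\chi^j$ and the coefficient $P_c(u(t+s))$ is common to both solutions.
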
 
\begin{proof} 
First we consider the regularised problem   \eqref{macro_unit2_reg}.  
To show existence of a solution of \eqref{macro_unit2_reg} we consider the discretisation in time of   \eqref{macro_unit2_reg}  and  obtain  
\begin{equation}\label{macro_unit2_reg_discrete}
\begin{aligned} 
&  {\rm div}_y \left(  k(u(t_m+s)+\delta)\left[ A(y) \frac 1 h \nabla_y (\chi^j_{\delta, m} - \chi^j_{\delta, m-1}) + B(y) P_c(u(t_m+s)+\delta) \nabla_y \chi^j_{\delta, m}  \right ]\right) = 0 \;   && \text{in } Y^\ast, \\
&   k(u(t_m+s)+\delta) [A(y)\frac 1 h \nabla_y  (\chi^j_{\delta, m}  - \chi^j_{\delta, m-1})   +  B(y) P_c(u(t_m+s)+\delta)  \nabla_y \chi^j_{\delta, m} ] \cdot \nu = 0 && \text{on } \Gamma, \\
& \int_{Y^\ast} \chi^j_{\delta, m} (x, y) dy =0, \hspace{3 cm }   \chi^j_{\delta, m}      \qquad             Y-\text{periodic}, 
\end{aligned} 
\end{equation} 
where $\chi^j_{\delta, 0}(x,y) = \omega^j(y) - \vartheta^j(y)$ in  $Y^\ast$, with $\chi^j_{\delta, 0}(x, \cdot)  \in H$ for $x \in G$, and   $t_m = hm$ for  $h=(T-s)/N$, $s\in [0,T)$,   $m=1, \ldots, N$,  and  $N\in \mathbb N$.   Here  $H = \{ v \in  H^1_{\rm per}(Y^\ast) \, : \; \int_{Y^\ast} v(y) dy = 0 \}$. 

A weak solution of  problem \eqref{macro_unit2_reg_discrete} is a function  $ \chi^j_{\delta, m} \in H$ satisfying  
\begin{equation}\label{discrete_unit_cell_22}
\begin{aligned}   
\Big\la k(u(t_m+s)+\delta)\left[ A(y) \frac 1 h \nabla_y \chi^j_{\delta, m}  + B(y) P_c(u(t_m+s)+\delta) \nabla_y \chi^j_{\delta, m}  \right ], \nabla_y \varphi \Big\ra_{Y^\ast} \\
= 
\frac 1 h \big\la  k(u(t_m+s)+\delta) A(y) \nabla_y  \chi^j_{\delta, m-1}, \nabla_y \varphi \big\ra_{Y^\ast} 
\end{aligned} 
\end{equation}
 for  $x\in G$, $\varphi \in H^1_{\rm per}(Y^\ast)$,  and a given $ \chi^j_{\delta, m-1} \in H$.   Assumptions on  $A$, $B$, $k$, and $P_c$  ensure that problem \eqref{macro_unit2_reg_discrete} is uniformly elliptic and the bilinear map $a: H \times  H \to \mathbb R$ defined as 
$$
a( \chi^j_{\delta, m} , \varphi) = \int_{Y^\ast} k(u(t_m+s)+\delta)\left[ A(y) \frac 1 h \nabla_y \chi^j_{\delta, m}  + B(y) P_c(u(t_m+s)+\delta) \nabla_y \chi^j_{\delta, m}  \right ] \nabla_y \varphi  \, dy
$$
 is coercive and bounded,  $F\in (H^1_{\rm per}(Y^\ast))^\prime$ given by 
$$
\la F, \varphi\ra_{ (H^1_{\rm per}(Y^\ast))^\prime, H^1_{\rm per}(Y^\ast)} = \frac 1 h \int_{Y^\ast}    k(u(t_m+s)+\delta)  A(y) \nabla_y  \chi^j_{\delta, m-1} \nabla_y \varphi   dy
$$
is bounded,   and $\la F , 1 \ra_{ (H^1_{\rm per}(Y^\ast))^\prime, H^1_{\rm per}(Y^\ast)} = 0$. Thus  applying the Lax-Milgram theorem  yields existence of a  unique solution $\chi^j_{\delta, m} \in H$ of \eqref{macro_unit2_reg_discrete} for $x\in G$ and $s\in [0, T)$.

Considering first $ \chi^j_{\delta, m}  -  \chi^j_{\delta, m-1}$ and then  $\chi^j_{\delta, m}$  as  test functions in \eqref{discrete_unit_cell_22},  summing over $m=1, \ldots, l$, for  $1<l\leq N$,  and using assumptions on functions $A$, $B$ $k$, and $P_c$ yield the following a priori estimates 
$$ 
\begin{aligned}
\sum_{m=1}^l h\Big \|  \frac {\nabla_y(\chi^j_{\delta, m} -\chi^j_{\delta, m-1})  } h \Big  \|^2_{L^2(Y^\ast)} +  \sum_{m=1}^l h\Big \| \nabla_y  \chi^j_{\delta, m}  \Big  \|^2_{L^2(Y^\ast)} \leq C
\end{aligned} 
$$
for $x\in G$.  
Here  we used discrete Gronwall and H\"older inequalities and the fact that
$$
\sum_{m=1}^l h \| \nabla_y  \chi^j_{\delta, m}   \|^2_{L^2(Y^\ast)} \leq   C 
\sum_{m=1}^l h  \sum_{i=1}^m h \Big \| \frac{ \nabla_y  (\chi^j_{\delta, i} - \chi^j_{\delta, i-1})} h   \Big  \|^2_{L^2(Y^\ast)}
+ \|\nabla \chi^j_{\delta, 0}\|^2_{L^2(Y^\ast)} . 
$$
Then for  piecewise linear and piecewise constant interpolations given by 
$$
\begin{aligned}
&\hat \chi^j_{\delta, N} (t,x,y)= \chi^j_{\delta, m-1}(x,y) + (t-t_{m-1}) \frac{\chi^j_{\delta, m}(x,y) - \chi^j_{\delta, m-1}(x,y)} h \; \text{ for } \;    t \in (t_{m-1}, t_m],\\
& \bar \chi^j_{\delta, N} (t,x,y) = \chi^j_{\delta, m}(x,y) \; \; \text{ for } \;   t \in (t_{m-1}, t_m], \; \; \;    m=1, \ldots, N, 
\end{aligned}
$$
 for $x\in G$ and $y \in Y^\ast$,  using the zero-mean value of $\chi^j_{\delta, m}$ and the Poincar\'e inequality,  we obtain  
$$
\| \partial_t  \hat \chi^j_{\delta, N} \|_{L^2(Y^\ast_{T-s})}+ \| \partial_t \nabla_y \hat \chi^j_{\delta, N} \|_{L^2(Y^\ast_{T-s})} +
 \|  \bar \chi^j_{\delta, N} \|_{L^2(Y^\ast_{T-s})}    + \| \nabla_y \bar \chi^j_{\delta, N} \|_{L^2(Y^\ast_{T-s})} \leq C, 
$$
for $x\in G$ and  a constant $C$ independent of $N$ and $x\in G$.  Last estimates ensure that there exists a function $\chi^j_\delta \in H$,   with $\partial_t \chi^j_\delta \in H$, such that 
$$
\begin{aligned} 
\bar \chi^j_{\delta, N} \rightharpoonup \chi_\delta^j \quad \text{ weakly$^\ast$  in } \; L^2(0, T-s; L^\infty(G; H^1(Y^\ast))), \\
\partial_t \hat \chi^j_{\delta, N} \rightharpoonup \partial_t \chi_\delta^j \quad \text{ weakly$^\ast$  in } \; L^2(0, T-s; L^\infty(G; H^1(Y^\ast))), 
\end{aligned} 
$$
as $N \to \infty$.  
Using continuity of $u$ with respect to time variable,  integrating  \eqref{discrete_unit_cell_22} with respect to $t$ and $x$, and taking the limit as $N \to \infty$ yield  that $\chi^j_\delta$ is a weak solution of the regularised `unit cell' problem \eqref{macro_unit2_reg}.  The linearity of the problem  and properties of $A$, $B$, $k$,  and $P_c$ ensure the uniqueness of a solution of \eqref{macro_unit2_reg}. 

Now we shall derive a priori  estimates for   $\chi^j_\delta$, uniformly in $\delta$.  Considering  $\chi^j_\delta / k(u+\delta)$ and $\partial_t \chi^j_\delta $  as test functions in the weak formulation of     \eqref{macro_unit2_reg} we obtain 
\begin{equation}\label{estim_unit_cell}
\begin{aligned} 
\|\nabla_y \chi_\delta^j\|_{L^\infty(0,T-s; L^2(Y^\ast))} +  \|\sqrt{P_c (u+ \delta)} \nabla_y \chi_\delta^j \|_{L^2((0,T-s)\times Y^\ast)} 
+ 
 \|\sqrt{k (u+ \delta)} \nabla_y\partial_t \chi_\delta^j \|_{L^2((0,T-s)\times Y^\ast)}  \leq C, 
\end{aligned} 
\end{equation} 
for  $x\in G$ and a constant $C>0$ independent of $\delta$ and $x\in G$. Assumptions on  $k$ and $P_c$ in Assumption~\ref{assum_1}, together with the additional assumption that $k$ is continuously differentiable for $z\geq 0$,   combined with  the regularity $\partial_t u \in L^{p}(0,T; L^{q_1}(G))$, where   $q_1=pn/(n-p)$ and $1<p<2$,  imply 
$$
\begin{aligned}
\langle  k(u+\delta) \nabla_y \partial_t  \chi^j_\delta, \nabla_y \psi \rangle_{Y^\ast_{T-s}} =  - \langle  k^\prime(u+\delta) \partial_t u \nabla_y  \chi^j_\delta, \nabla_y \psi \rangle_{Y^\ast_{T-s}} - \langle  k(u+\delta) \nabla_y  \chi^j_\delta, \nabla_y \partial_t  \psi \rangle_{Y^\ast_{T-s}}, 
 \end{aligned}
$$
for $\psi \in C^1_0(G_{T-s} \times Y^\ast)$ and $x\in G$. 
Taking in the last equality the limit as $\delta \to 0$ and considering   estimates  in \eqref{estim_unit_cell} yield 
$$
\sqrt{k(u+\delta)} \nabla_y \partial_t  \chi^j_\delta  \rightharpoonup   \sqrt{k(u)} \nabla_y \partial_t  \chi^j \qquad \text{weakly  in }\;  L^2(G_{T-s}\times Y^\ast).
$$
Then, using  the continuity of  $k$  and $P_c$,  regularity of $\partial_t u$ and the  estimate for $\nabla_y \chi^j_\delta$ in $L^\infty(G_{T-s};  L^2( Y^\ast))$,   we can pass to the limit as $\delta \to 0$ in the weak formulation of  \eqref{macro_unit2_reg} and obtain that the limit function $\chi^j \in L^\infty(G_{T-s}; H^1_{\rm per}(Y^\ast))$, with $\sqrt{k(u)} \partial_t \chi^j \in L^2(G_{T-s}; H^1_{\rm per}(Y^\ast))$, is a solution of~\eqref{macro_unit22}. 

To prove the uniqueness result for \eqref{macro_unit22} we assume that there are two solutions $\chi^j_1$ and $\chi^j_2$ of \eqref{macro_unit22} and  consider $ [k(u+ \delta)]^{-1} (\chi^j_1 - \chi^j_2)$ as a test function in the weak formulation of the equations for the difference  $(\chi^j_1- \chi^j_2)$  to obtain 
$$
\int_{G_{\tau-s}} \int_{Y^\ast}  \frac{k(u(t+s))}{k(u(t+s)+\delta)} \Big( A(y) \partial_t \nabla_y (\chi^j_1 - \chi^j_2) \nabla_y (\chi^j_1 - \chi^j_2) + B(y) P_c(u(t+s)) |\nabla_y (\chi^j_1 - \chi^j_2)|^2 \Big)dy dx dt= 0, 
$$
for $\tau \in (s, T]$. 
 Integrating by parts in the first term of the last equality, using assumptions on  $A$, $B$,  $k$, and $P_c$, together with  nonnegativity of $u$,   and taking limit as $\delta \to 0 $  imply  
$$
\sup_{(0, T-s)} \|\nabla_y (\chi^j_1 - \chi^j_2)\|_{L^2(G\times Y^\ast)} \leq 0 . 
$$
Then  Poincar\'e inequality and  the fact that the mean value of $\chi^j_l$, for $l=1,2$, is zero ensure   $\chi^j_1 = \chi^j_2$ a.e. in $G\times Y^\ast_{T-s}$,  for $s\in [0, T)$ and $j=1, \ldots, n$.  
\end{proof} 

Considering  the expression   \eqref{w_new} for $w$ in  \eqref{macro_ineq}  and choosing $\phi = v - u$ yield  the corresponding macroscopic variational inequality 
\begin{equation*}
\begin{aligned} 
& \langle \partial_t b(u), v - u \rangle_{G_T}   +\big \langle  k(u)  [ A_{\rm hom} \partial_t \nabla u  +  B_{\rm hom}   P_c(u)  \nabla u  ] , \nabla(v - u) \big\rangle_{G_T} \\ 
&+\Big\langle \int_0^t K_{\rm hom}(t-s,x) \partial_s \nabla u \, ds ,  \nabla(v - u) \Big\rangle_{G_T} 
  - \langle   F_{\rm hom}(t,x,u) ,  \nabla (v - u) \rangle_{G_T}   + 
\langle  f_{\rm hom}(t,u), v - u \rangle_{G_T}  \geq 0, 
\end{aligned} 
\end{equation*} 
for $v\in L^2(0, T; \mathcal K)$, where $A_{\rm hom}$, $F_{\rm hom}$ and $f_{\rm hom}$ are defined  in Theorem~\ref{th:macro}, and matrices  $B_{\rm hom}= ( B_{\rm hom}^{ij})$ and   $K_{\rm hom}(t,x)= ( K_{\rm hom}^{ij}(t,x))$ are determined by 
$$
\begin{aligned}
&B_{\rm hom}^{ij} = \dashint_{Y^\ast} B(y)\left(\delta_{ij} + \frac{\partial \vartheta^j}{\partial {y_i}} \right) dy, \\
&K_{\rm hom}^{ij}(t,x) = \dashint_{Y^\ast} k(u(t+s,x))[ A(y)  \partial_t \partial_{y_i} \chi^j  + B(y)  P_c(u(t+s,x)) \partial_{y_i} \chi^j ]  dy. 
\end{aligned}
$$

\section*{Appendix}
 \begin{definition}\cite{Allaire, Nguetseng}
A sequence    $\{u^{\varepsilon}\} \subset L^p(G)$  converges two-scale   to $u$,  with 
 $ u \in L^p(G \times Y)$,  iff for any $\phi \in L^q(G; C_{\rm per}(Y))$ we have 
 $$
\lim_{\varepsilon \rightarrow 0}\int_{G}u^{\varepsilon}(x)\phi\left(x,\frac{x}{\varepsilon}\right)dx 
= \int_{G}\dashint_{ Y} u(x,y)\phi(x,y)dx dy, 
$$
where  $1/p+1/q=1$. 
\end{definition}

\begin{definition}\cite{Allaire, Neuss}
 A sequence $\{u^{\varepsilon}\}\subset L^2({\Gamma}^{\ve})$  converges  two-scale to  $u$, with  $u \in L^2(G \times \Gamma)$, iff  for
$\psi \in C_0(G; L_{\rm per}^2(\Gamma))$ holds
$$
\lim_{\varepsilon \rightarrow 0} \varepsilon  \int_{{\Gamma}^{\varepsilon}}u^{\epsilon}(x)\psi\Big(x,\frac x{\varepsilon}\Big)\, d\gamma_x =
\frac 1{|Y|}\int_{G}\int_{\Gamma} u(x,y)\psi(x,y)\, dx d\gamma_y .
$$
\end{definition}

\begin{theorem}[Compactness]\cite{Allaire, Nguetseng}
 Let $\{u^{\varepsilon}\}$  be a bounded sequence in  $H^{1}(G)$, which converges weakly to  $u \in H^{1}(G)$. Then there exists $u_1 \in L^2(G; H^{1}_{\rm per}(Y))$ such that, up to a subsequence, $u^{\varepsilon}$ two-scale converges to $u$ and $\nabla u^{\varepsilon}$ two-scale converges to $\nabla u+\nabla_y u_1$.\\
Let $\{u_{\varepsilon}\}$ and $\{{\varepsilon} \nabla u^{\varepsilon}\}$  be  bounded sequences in  $L^2(G)$. Then there exists $u_0 \in L^2(G; H^{1}_{\rm per}(Y))$ such that, up to a subsequence, $u^{\varepsilon}$ and ${\varepsilon} \nabla u^{\varepsilon}$ two-scale converge to $u_0$ and $\nabla_y u_0$, respectively.\\
Let $\{\sqrt{\ve} u_{\varepsilon}\}$  be a  bounded sequences in  $L^2(\Gamma^\ve)$. Then there exists $u_0 \in L^2(G \times \Gamma)$ such that, up to a subsequence, $u^{\varepsilon}$  two-scale converge to $u_0$. 
\end{theorem}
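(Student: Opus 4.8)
The plan is to deduce all three assertions from a single \emph{fundamental oscillation lemma} together with a weak-$*$ compactness argument in the dual of a separable space, and then to extract the extra structure of the gradient limits by integrating by parts against $\ve$-scaled oscillating test functions. First I would record the oscillation lemma: for $\phi\in L^2(G;C_{\rm per}(Y))$ one has $\phi(\cdot,\cdot/\ve)\rightharpoonup\dashint_Y\phi(x,y)\,dy$ weakly in $L^2(G)$ and $\|\phi(\cdot,\cdot/\ve)\|_{L^2(G)}^2\to\int_G\dashint_Y|\phi(x,y)|^2\,dy\,dx$, both obtained by approximating $\phi$ in $L^2(G;C_{\rm per}(Y))$ by finite sums $\sum_i a_i(x)\psi_i(x/\ve)$ and using the mean-value property $\psi_i(\cdot/\ve)\rightharpoonup\dashint_Y\psi_i$ for $Y$-periodic continuous $\psi_i$. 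Consequently, if $\{u^\ve\}$ is bounded in $L^2(G)$, the functionals $T^\ve(\phi)=\int_G u^\ve(x)\phi(x,x/\ve)\,dx$ are uniformly bounded on $L^2(G;C_{\rm per}(Y))$ for small $\ve$; since that space is separable, Banach--Alaoglu together with a diagonal argument over a countable dense subset produces a subsequence along which $T^\ve(\phi)\to T(\phi)$ for every $\phi$, and the oscillation bound gives $|T(\phi)|\le C\|\phi\|_{L^2(G\times Y)}$, so by the Riesz representation theorem $T(\phi)=\int_G\dashint_Y u_0(x,y)\phi(x,y)\,dy\,dx$ for some $u_0\in L^2(G\times Y)$, i.e.\ $u^\ve\rightharpoonup u_0$ two-scale. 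This furnishes the first halves of statements one and two (applied to $u^\ve$, bounded in $L^2(G)$ in both cases).

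Next I would identify the gradient limits. For statement one, apply the basic compactness also to $\nabla u^\ve$, obtaining $\nabla u^\ve\rightharpoonup\xi$ two-scale, and call $u_0$ the two-scale limit of $u^\ve$. For $\Psi\in C^\infty_0(G;C^\infty_{\rm per}(Y))^n$, integration by parts yields
\begin{equation*}
\ve\int_G\nabla u^\ve\cdot\Psi\Big(x,\tfrac x\ve\Big)\,dx=-\ve\int_G u^\ve\,(\nabla_x\!\cdot\!\Psi)\Big(x,\tfrac x\ve\Big)\,dx-\int_G u^\ve\,(\nabla_y\!\cdot\!\Psi)\Big(x,\tfrac x\ve\Big)\,dx;
\end{equation*}
the left side and the first term on the right tend to $0$ since $\nabla u^\ve$ is bounded in $L^2$, hence $\int_G\dashint_Y u_0\,(\nabla_y\!\cdot\!\Psi)\,dy\,dx=0$, so $\nabla_y u_0=0$, and testing $u^\ve$ against $x\mapsto\phi_1(x)$ identifies $u_0$ with the weak $H^1$-limit $u$. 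Taking now $\Psi$ with $\nabla_y\!\cdot\!\Psi\equiv0$ and letting $\ve\to0$ in $\int_G\nabla u^\ve\cdot\Psi(x,x/\ve)\,dx=-\int_G u^\ve(\nabla_x\!\cdot\!\Psi)(x,x/\ve)\,dx$ gives $\int_G\dashint_Y(\xi-\nabla u)\cdot\Psi\,dy\,dx=0$ for all $y$-solenoidal $Y$-periodic $\Psi$, and the Helmholtz decomposition on the torus $Y$ then yields $\xi-\nabla u=\nabla_y u_1$ with $u_1\in L^2(G;H^1_{\rm per}(Y)/\IR)$, which is the assertion.

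Statement two is the same computation with $\ve\nabla u^\ve$ in place of $\nabla u^\ve$: now the $\nabla_y$-term is of the same order, so $\ve\int_G\nabla u^\ve\cdot\Psi(x,x/\ve)\,dx\to-\int_G\dashint_Y u_0\,\nabla_y\!\cdot\!\Psi\,dy\,dx$, which says the two-scale limit of $\ve\nabla u^\ve$ equals $\nabla_y u_0$ distributionally in $y$, hence $u_0\in L^2(G;H^1_{\rm per}(Y))$. For statement three I would first prove the surface oscillation lemma $\ve\int_{\Gamma^\ve}|\psi(x,x/\ve)|^2\,d\gamma_x\to|Y|^{-1}\int_G\int_\Gamma|\psi(x,y)|^2\,dx\,d\gamma_y$ for $\psi\in C_0(G;C_{\rm per}(\Gamma))$, by decomposing $\Gamma^\ve=\bigcup_{\xi\in\Xi^\ve}\ve(\Gamma+\xi)$, using the scaling $d\gamma_x=\ve^{n-1}\,d\gamma_y$ on each rescaled piece together with $\#\Xi^\ve\sim|G|/(|Y|\ve^n)$ and the uniform continuity of $\psi$ in $x$ (the cells adjacent to $\partial G$ contribute a remainder of order $\ve$); then $\bigl|\ve\int_{\Gamma^\ve}u^\ve\psi(x,x/\ve)\,d\gamma_x\bigr|\le(\ve\|u^\ve\|_{L^2(\Gamma^\ve)}^2)^{1/2}(\ve\|\psi(\cdot,\cdot/\ve)\|_{L^2(\Gamma^\ve)}^2)^{1/2}$ is uniformly bounded, and the same separable-dual argument as above, together with the density of $C_{\rm per}(\Gamma)$-valued functions in $C_0(G;L^2_{\rm per}(\Gamma))$, produces $u_0\in L^2(G\times\Gamma)$.

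The step I expect to be the real obstacle is the identification $\xi-\nabla u=\nabla_y u_1$ in the second paragraph (and its analogue $\xi_0=\nabla_y u_0$ for statement two): it rests on the characterisation of the $L^2(G\times Y)$-closure of $\{\nabla_y\phi:\phi\in L^2(G;H^1_{\rm per}(Y))\}$ as the orthogonal complement of the space of $Y$-periodic divergence-free fields — the one genuinely nontrivial functional-analytic ingredient, and precisely what produces the periodicity and $H^1_y$-regularity of the corrector $u_1$. Everything else is bookkeeping built on the oscillation lemma; a secondary technicality is controlling the boundary-cell remainder near $\partial G$ and carrying out the density step in the third statement, which is routine but must be done carefully because $\psi(x,x/\ve)$ need not make pointwise sense on $\Gamma^\ve$ for merely $L^2_{\rm per}(\Gamma)$-valued $\psi$ without first approximating by continuous ones.
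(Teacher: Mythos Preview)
Your proposal is correct and follows the classical route of Nguetseng and Allaire: weak-$*$ compactness in the dual of the separable space $L^2(G;C_{\rm per}(Y))$ to get a two-scale limit, then integration by parts against $\ve$-scaled oscillating test fields to identify the gradient structure, with the Helmholtz/de Rham decomposition on the torus furnishing the corrector $u_1$; the boundary case is handled by the surface oscillation lemma as in Neuss-Radu. However, the paper does not give its own proof of this statement at all --- it is recorded in the Appendix purely as a citation of \cite{Allaire,Nguetseng} (and implicitly \cite{Neuss} for the boundary part) --- so there is nothing in the paper to compare against beyond noting that your argument is precisely the one those references contain.
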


\end{document}